\newcolumntype{C}{>{$}c<{$}}
\newcommand{\rt}{\rightarrow}
\newcommand{\lrt}{\longrightarrow}
\newcommand{\st}{\stackrel}
\newcommand{\la}{\lambda}
\newcommand{\La}{\Lambda}
\newcommand{\Ga}{\Gamma}
\newcommand{\Z}{\mathbb{Z} }
\newcommand{\CA}{\mathcal{A} }
\newcommand{\CC}{\mathcal{C} }
\newcommand{\CE}{\mathcal{E}}
\newcommand{\CG}{\mathcal{G} }
\newcommand{\CK}{\mathcal{K} }
\newcommand{\CM}{\mathcal{M} }
\newcommand{\CQ}{\mathcal{Q} }
\newcommand{\CS}{\mathcal{S} }
\newcommand{\CT}{\mathcal{T} }
\newcommand{\CX}{\mathcal{X} }
\newcommand{\CY}{\mathcal{Y} }
\newcommand{\CU}{\mathcal{U}}
\newcommand{\CB}{\mathcal{B} }
\newcommand{\CH}{\mathcal{H}}
\newcommand{\mmod}{{\rm{{mod\mbox{-}}}}}
\newcommand{\prj}{{\mathsf{prj}\mbox{-}}}
\newcommand{\cm}{{\mathsf{CM}}}
\newcommand{\md}{{\mathsf{mod}}}
\newcommand{\Ker}{{\rm{Ker}}}
\newcommand{\rad}{{\rm{rad}}}
\newcommand{\uEnd}{\underline{\mathsf{End}}}
\newcommand{\gpr}{{\mathsf{Gprj}}}
\newcommand{\ugpr}{\underline{\mathsf{Gprj}}}
\newcommand{\mon}{\mathsf{mon}}
\newcommand{\ad}{\mathsf{add}}
\newcommand{\Hom}{{\rm{Hom}}}
\newcommand{\Ext}{{\rm{Ext}}}
\newcommand{\End}{\mathsf{{End}}}
\theoremstyle{plain}
\newtheorem{theorem}{Theorem}[section]
\newtheorem{corollary}[theorem]{Corollary}
\newtheorem{lemma}[theorem]{Lemma}
\newtheorem{proposition}[theorem]{Proposition}
\theoremstyle{definition}
\newtheorem{definition}[theorem]{Definition}
\newtheorem{example}[theorem]{Example}
\newtheorem{remark}[theorem]{Remark}
\newtheorem{setup}[theorem]{Setup}
\newtheorem{s}[theorem]{}
\theoremstyle{plain}
\theoremstyle{definition}
\numberwithin{equation}{section}
\begin{document}

\title[G-semisimple algebras]{ G-semisimple algebras}

\author[Rasool Hafezi and Abdolnaser Bahlekeh ]{Rasool Hafezi and Abdolnaser Bahlekeh}

\address{School of Mathematics and Statistics, Nanjing University of Information Science and Technology, Nanjing, 210044, China}

\email{hafezi@nuist.edu.cn}
\address{Department of Mathematics, Gonbade-Kavous University, Postal Code:4971799151, Gonbad-e-Kavous, Iran}
\email{bahlekeh@gonbad.ac.ir}

\subjclass[2010]{ 18A25, 16G10, 16G70}
\thanks{}
\keywords{G-semisimple algebra, monomorphism category, (stable) Auslander Cohen-Macaulay algebra, Gorenstein projective module, Auslander-Reiten quiver}


\begin{abstract}
Let $\La$ be an Artin algebra and $\md\mbox{-} (\ugpr\mbox{-}\La)$ the category of finitely presented functors over the stable category $\ugpr\mbox{-}\La$ of finitely generated Gorenstein projective $\La$-modules. This paper deals with those algebras $\La$ in which $\md\mbox{-} (\ugpr\mbox{-}\La)$ is a semisimple abelian category, and we call G-semisimple algebras. We study some basic properties of such algebras. In particular, it will be observed that the class of G-semisimple algebras contains important classes of algebras, including gentle algebras and more generally quadratic monomial algebras. Next, we construct an epivalence from the stable category of Gorenstein projective representations $\ugpr(\CQ, \La)$ of a finite acyclic quiver $\CQ$ to the category of representations ${\rm rep}(\CQ, {{\ugpr}} \mbox{-} \La)$ over  ${{\ugpr}} \mbox{-} \La$, provided $\La$ is a G-semisimple algebra over an algebraic closed field. Using this, we will show that the path algebra $\La\CQ$ of the G-semisimple algebra $\La$ is $\cm$-finite if and only if $\CQ$ is Dynkin.
In the last part, we provide a complete classification of indecomposable Gorenstein projective representations within $\gpr(A_n, \La)$ of the linear quiver $A_n$ over a G-semisimple algebra $\La$. We also determine almost split sequences in $\gpr(A_n, \La)$ with certain ending terms. We apply these results to obtain insights into the cardinality of the components of the stable Auslander-Reiten quiver $\gpr(A_n, \La)$.

\end{abstract}

\maketitle
\tableofcontents

\section{Introduction }
Let $\La$ be an Artin algebra and $\text{H}(\La)$ the morphism category of $\La$. Assume that $\CX$ is a quasi-resolving subcategory of $\md\mbox{-}\Lambda$, the category of all finitely generated right $\La$-modules. Denote by $\CS_{\CX}(\La)$ the subcategory of $\text{H}(\La)$ consisting of those monomorphisms with terms in $\CX$ such that whose cokernels lie in $\CX$. In the case $\CX=\md\mbox{-}\Lambda$, the subcategory $\CS_{\md\mbox{-}\Lambda}(\La)$, simply denoted by $\CS(\La)$, becomes the submodule category which has been studied extensively by Ringel and Shmidmeier in \cite{RS1, RS2}. Ringel and Zhang \cite{RZ1} established a surprising connection between $\CS(k[x]/{(x^n)})$ and the module category $\Pi_{n-1}$ of type $A_{n-1}$ by defining two functors which originally come from the works of Auslander-Reiten and Li-Zhang. It is kown that $\Pi_{n-1}$ is the stable Auslander algebra of representation-finite algebra $k[x]/{(x^n)}$, see for example \cite{DR}. So the functors appeared in \cite{RZ1} are indeed functors from $\CS(k[x]/{(x^n)})$ to $\text{Aus}(\underline{\md}\mbox{-}k[x]/{(x^n)})$. Recall that the stable Auslander algebra $\text{Aus}(\underline{\md}\mbox{-}\La)$ of a representation-finite algebra $\La$ is the endomorphism algebra $\uEnd_{\La}(M)$, where $M$ is a representation generator for $\La$, that is, the additive closure ${\ad}\mbox{-}M$ is equal to $\md\mbox{-}\La$. These facts lead the first author to define a functor $\Psi_{\CX}$ from $\CS_{\CX}(\La)$ to $\md\mbox{-}\underline{\CX}$, for a quasi-resolving subcategory $\CX$ of $\md\mbox{-}\La$, see \cite[Construction 3.1]{H}. This functor is a relative version
of one of the two functors studied in \cite{RZ1}, and also \cite{E}, in a functorial language. According to \cite[Theorem 3.2]{H}, the functor $\Psi_{\CX}$ induces an equivalence between the quotient category $\CS_{\CX}(\La)/{\CU}$ and $\md$-$\underline{\CX}$, where $\CU$ is the ideal generated by the objects of the form $(X\st{1}\rt X)$ and $(0\rt X)$ for all $X\in\CX$. Here $\md$-$\underline{\CX}$ is the category of finitely presented functors on $\underline{\CX}$.
Now, if $\CX$ is representation-finite, i.e. $\CX=\ad$-$X$ for some $X\in\CX$, then $\md$-$\underline{\CX}\cong\md$-${\rm Aus}(\underline{\CX})$, where ${\rm Aus}(\underline{\CX})=\uEnd_{\La}(X)$. This yields that, the representation theoretic properties of $\CS_{\CX}(\La)$ via the functor $\Psi_{\CX}$ can be approached by the module category over the associated relative stable Auslander algebra.

Assume that $\gpr$-$\La$ is the category of finitely generated $\La$-modules, which is known to be a resolving subcategory of $\md$-$\La$. So, there is a functor $\Psi_{\gpr\mbox{-}\La}:\CS_{\gpr\mbox{-}\La}(\La)\lrt\md$-$(\ugpr$-$\La)$. In particular, if $\La$ is $\cm$-finite with $G$ an additive generator, then $\Psi_{\gpr\mbox{-}\La}$ induces an equivalence $\md\mbox{-}(\ugpr\mbox{-}\La)\cong\md\mbox{-}{\rm Aus}(\ugpr\mbox{-}\La)$. To simplify the notation, we denote $\CS_{\gpr\mbox{-}\La}(\La)$ by $\CS(\gpr\mbox{-}\La)$. It is known that ${\rm Aus}(\ugpr\mbox{-}\La)=\uEnd_{\La}(G)$, called the stable Cohen-Macaulay Auslander algebra of $\La$, is a self-injective algebra. This yields that the properties of the category of modules over the self-injective algebra can be transferred to the monomorphism category $\CS(\gpr\mbox{-}\La)$. This is of particular interest because the category of modules over a self-injective algebra is well-understood, see for example \cite{DI, SY}. Motivated by these observations, we study the monomorphism category $\CS(\gpr\mbox{-}\La)$ via the functor category $\md$-$(\ugpr$-$\La)$, where $\La$ is $\cm$-finite, and so, it is equivalent to the module category of a self-injective algebra. The initial step in this direction, at least in the homological dimensions sense, is when the global projective dimension of $\md\mbox{-}(\ugpr\mbox{-}\La)$ is zero, i.e., $\md\mbox{-}(\ugpr\mbox{-}\La)$ is a semisimple abelian category. We call an algebra with this property a G-{\it semisimple algebra.}  The notion of G-semisimple algebras was first introduced in an unpublished work \cite{H1} by the first author. In this current paper, we build upon and enhance the results established in \cite{H1} by improving and extending them.

The next step, will be that if the stable Cohen-Macaulay Auslander algebra is a self-injective Nakayama algebra, then does it reflect some expected properties of $\CS(\gpr\mbox{-}\La)$? This will be done in a separate work.

In Section 3, we give some basic properties of these algebras. It is shown that any G-semisimple algebra is $\cm$-finite, see Proposition \ref{CM-finite}. It will be also seen that over a G-semisimple algebra $\La$, the indecomposable non-projective objects of $\CS(\gpr$-$\La)$ can be computed within those in $\gpr$-$\La$. This, in particular yields that $\CS(\gpr$-$\La)$ is representation finite, see Proposition \ref{pro11}. We shall also describe the singularity category over Gorenstein G-semisimple algebras, see Proposition \ref{proposition 2.8}. A classification of indecomposable non-projective Gorenstein projective modules over G-semisimple algebras will be given which generalizes the ones in \cite{CSZ} for quadratic monomial algebras and \cite{Ka} for gentle algebras to G-semisimple algebras (see Remark \ref{remark 2.7} for details). The classification is given by defining an equivalence relation on the indecomposable non-projective modules in $\gpr$-$\La$. Moreover,
a characterization of 1-Gorenstein G-semisimple algebras in terms of the radical of their projective modules will be given, see Proposition \ref{RadGorp}. Also, some examples of such algebras are presented, see Example \ref{123}. As indicated above, a G-semisimple algebra is $\cm$-finite, so such an algebra $\La$ is nothing else to say that the associated stable Cohen-Macaulay algebra of $\La$ is a semisimple algebra. Such an observation was investigated in the remarkable paper of Auslander and Reiten, where they showed that for a dualizing $R$-variety $\mathcal{C}$: the global dimension of $ \md\mbox{-}(\underline{\md}\mbox{-}\CC)$ is zero if and only if $\md\mbox{-}\CC$ is Nakayama and whose lowey length is at most two \cite[Theorem 10.7]{AR1}. The common case with our study is when $\mathcal{C}=\mathsf{prj}\mbox{-}\La$ and $\La$ is a self-injective algebra. It will be observed that the class of G-semisimple algebras contains important classes of algebras in the representation theory such as the class of gentle algebras, and more generally the class of quadratic monomial algebras, see Remark \ref{Remark 2.8}. The study of gentle algebras was initiated by Assem and Skowro\'{n}ski \cite{ASk} in order to study iterated tilted algebras of type $\mathbb{A}$. Viewing gentle algebras as G-semisimple algebras provides them a functorial approach. Even though the provided functorial approach is not completely determined by gentle algebras, however it might be helpful to consider them in the larger class of G-semisimple algebras.

During the last decades, there has been a rising interest in the monomorphism category of a ring. The study of the monomorphism category, which is known also a submodule category, goes back to G. Birkhoff \cite{Bi}, in which he has initiated to classify the indecomposable objects of the submodule category of $\Z/{(p^{n})}$, with $n\geq 2$ and $p$ a prime number. The study of submodule categories was revitalized by the systematic and deep work of Ringel and Schmidmeier \cite{RS1, RS2}. Thereafter, links to other areas of algebra such as singularity category and weighted projective lines \cite{KLM, S} and Gorenstein homological algebra \cite{Z, LZh2, ZX} were also detected. Recently, in order to describe the indecomposable objects of the (separated) monomorphism category, Gao et.al. \cite{GKKP} constructed an epivalence, i.e. a full and dense functor that reflects isomorphisms. Precisely, let $\CQ$ be  a finite acyclic quiver and $\CB$ an abelian category with enough injectives. Assume that $\mathsf{mono}(\CQ, \CB)$ is the (separated) monomorphism category of $\CQ$ over $\CB$. Then they proved that the canonical functor from the injectively stable category $\overline{\mathsf{mono}}(\CQ, \CB)$ to the category of representations of $\CQ$ over $\bar{\CB}$, ${\rm rep}(\CQ, \bar{\CB})$ is an epivalence. Moreover, if $\CQ$ has at least one arrow, then it is equivalence if and only if $\CB$ is hereditary, see \cite[Theorem A]{GKKP}. Inspired by this, in Section 4, we show that  there is an epivalence from the stable category of Gorenstein projective representations $\ugpr(\CQ, \La)$ of a finite acyclic quiver $\CQ$ to the category of representations ${\rm rep}(\CQ, {{\ugpr}} \mbox{-} \La)$ over ${{\ugpr}} \mbox{-} \La$, provided $\La$ is a G-semisimple algebra over an algebraic closed field, see Theorem \ref{main}. As an interesting application, we show that if $\La$ is a G-semisimple algebra over an algebraically closed field, then the path algebra $\La\CQ$ is $\cm$-finite if and only if $\CQ$ is Dynkin. Also, the number of indecomposable non-projective objects of $\ugpr(\CQ, \La)$, with $\CQ$ Dynkin and $\La$ G-semisimple, is determined in terms of positive roots, see Corollary \ref{dynkin}.

The paper's final section, Section 5, deals with the stable Auslander-Reiten quiver of $\CS_n(\gpr$-$\La)$ with $\La$ a G-semisimple algebra. Here $\CS_n(\gpr$-$\La)$ is the category of Gorenstein projective representations $\gpr(A_n, \La)$, and so, $\CS_2(\gpr$-$\La)=\CS(\gpr\mbox{-}\La)$.
In this direction, not only the existence of almost split sequences in $\CS_n(\gpr$-$\La)$ is proved but also the structure of such sequences with certain ending terms is recognized. Also, the isomorphism classes of indecomposable objects of this category, are completely described. The main result of this section gives some information concerning the cardinality of the stable Auslander-Reiten quiver of $\CS_n(\gpr$-$\La$), see Theorem \ref{Theorem 4.10}. Precisely, we show that the cardinality is divisible by $n+1$ (resp. $\frac{n+1} {2}$), whenever $n$ is even (resp. odd).

{\bf Notation and Convention.}
Throughout the paper, $\La$ is an Artin algebra, and by a module, we always mean a finitely generated right $\La$-module. The category of all (finitely generated) $\La$-modules will be denoted by $\md\mbox{-}\Lambda$. For a module $M$, consider a short exact sequence $0 \rt \Omega_{\La}(M)\st{i}\rt P \st{p}\rt M\rt 0$ with $P$ a
projective cover of $M$. The module $\Omega_{\La}(M)$ is then called a syzygy module of $M$. An $n$-th syzygy of $M$ will be
denoted by $\Omega^n_{\La}(M)$, for $n\geqslant 2$. For a module $G$ in $\gpr$-$\La$, $\Omega^{-1}_{\La}(G)$ denotes the cokernel of a minimal left $\mathsf{prj}\mbox{-}\La$-approximation of $G$. Similarly, one can define $\Omega^{-n}_{\La}(G)$ for any $n\geqslant 2.$ It should be noted that, if $\La$ is assumed to be a selfinjective algebra, then $\Omega^{n}_{\La}G$, is the same as the $n$-th cosyzygy of $G$. Moreover, if $G$ is indecomposable, then it is easily seen that $\Omega^{-1}_{\La}\Omega_{\La}M\simeq M\simeq\Omega_{\La}\Omega^{-1}_{\La}M$. For a sufficiently nice category $\CC$, we denote by $\tau_{\CC}$ the Auslander-Reiten translation in $\CC$. By $\text{ind}\mbox{-}\CC$
we mean the subcategory of all indecomposable objects in $\CC.$ For simplicity, for the cases $\CC=\md\mbox{-}\La$ and $\gpr$-$\La$, we write $\tau_{\La}$ and $\tau_{\CG}$ instead, respectively. We refer to \cite{AuslanreitenSmalo, AS, Li, Kr} for a background on the Auslander-Reiten theory,
in particular the last three references concerning the Auslander-Reiten theory in subcategories which we are mostly dealing with in this paper. For the sake of simplicity, we often identify a module with its isomorphism class.

\section{preliminaries}

In this section, we recall the definitions of several basic notions that we deal within this paper.

\begin{s}{\sc Almost split sequences.} Let $(\CC, \CE)$ be an exact category. A morphism
$f: B\rt C$ in $\CC$ is said to be right almost split provided it is not a split epimorphism and
every morphism $h: X\rt C$ which is not a split epimorphism factors through $f$. {Dually, a morphism
$g: A\rt B$ is left almost split if it is not a split monomorphism and each morphism $h: A\rt C$ which
is not a split monomorphism factors through $g$. A short exact sequence $\eta:0\rt A\st{f}\rt B\st{g}\rt C\rt 0$
in $\CE$ is called almost split, provided that $f$ is left almost split and ${g}$ is right almost split, see \cite{Li} and also \cite{INP} in the context of extriangulated categories.
We remark that, since this sequence is unique up to isomorphism for $A$ and for $C$,
we may write $A=\tau_{\mathcal{A}}{C}$ and $C=\tau_{\mathcal{A}}^{-1}A$. We call $A$ the Auslander-Reiten translation of $C$ in $\CC$.
It is known that if $\eta:0\rt A\st{f}\rt B\st{g}\rt C\rt 0$ is an almost split sequence, then $A$ and $C$ have local endomorphism ring, see \cite[Proposition II.4.4]{Au3}.}
We say that $\CC$ has right almost split sequences, if for any non-projective object $Y$ with local endomorphism ring,
there is an almost split sequence $\delta: 0\rt X\rt E\rt Y\rt 0$ in $\CE$. The notion of having left almost split sequences
is defined dually. Now $\CC$ is said to have almost split sequences, if it has left and right almost split sequences.
\end{s}

\begin{s}{\sc Gorenstein projective objects.} Assume that $\mathcal{A}$ is an abelian category with enough projectives. A complex $$\mathbf{P}^{\bullet}:\cdots\lrt P^{i-1}\st{d^{i-1}}\lrt P^i\st{d^i}\lrt P^{i+1}\lrt\cdots$$ of projective objects in $\mathcal{A}$ is said to be {\em totally acyclic}, provided it is acyclic and the Hom complex $\Hom_{\mathcal{A}}(\mathbf{P}^{\bullet}, Q)$ is also exact, for all projective objects $Q$ in $\mathcal{A}$. An object $M$ in $\mathcal{A}$ is called {\em Gorenstein projective} if it is a syzygy of a totally acyclic complex. The full subcategory of all Gorenstein projective objects in $\mathcal{A}$ will be denoted by $\gpr\mbox{-} \mathcal{A}$. Moreover, for the simplicity, if $\mathcal{A}=\md\mbox{-} \La$, then we denote it by $\gpr\mbox{-} \La$.

The concept of Gorenstein projective modules was first defined by
Maurice Auslander in the mid-sixties \cite{AB}, for finitely generated modules over a two-sided
Noetherian ring, called modules of $G$-dimension zero. Later they found important applications in
commutative algebra, algebraic geometry, singularity theory, and relative homological algebra.
Enochs and Jenda \cite{EJ} generalized the notion of Gorenstein projective modules to (not necessarily
finitely generated) modules over any ring $R$, under the name of Gorenstein projective modules. Studying (finitely generated) Gorenstein projective modules has
attracted attention in the setting of Artin algebras, see for instance \cite {AR4, AR5, CSZ, RZ1, RZ2} and references therein.

\end{s}

\begin{s}{\sc Functor categories.} Let $\CX$ be a skeletally small additive category. A (right) $\CX$-module is a contravariant additive functor from $\CX$ to the category of abelian groups.
The $\CX$-modules and natural transformations between them form an abelian category which is denoted by $\mathsf{Mod}$-$\CX$.
Recall from \cite{Au4} that an $\CX$-module $F$ is called {\em finitely presented} if there exists an exact sequence
$$\Hom_{\CX}(-, B)\lrt\Hom_{\CX}(-, B')\lrt F\lrt 0$$ with $B, B'\in\CX$. We denote by $\md\mbox{-}\CX$ the category of all finitely presented $\CX$-modules.
It is known that $\md$-$\CX$ is an abelian category if and only if $\CX$ admits weak kernels, see \cite[Chapter III, Section 2]{Au2}.
This happens, for example, when $\CX$ is a contravariantly finite subcategory of an abelian category $\CA$.
In the remainder of the paper, we shall sometimes write $(-, C)$ to indicate the representable functor $\CX(-, C)=\Hom_{\CX}(-, C)$.

Assume that $\CX$ is a contravariantly finite subcategory of $\md$-$\La$ of finite representation type that contains projectives. Assume that $X$ is an object of $\CX$ such that $\CX=\mathsf{add}\mbox{-}(X)$, where $\mathsf{add}\mbox{-}(X)$ the set of all direct summands of finite direct sums of copies of $X$. Then the relative Auslander algebra of $\La$, denoted ${\rm Aus}(\CX)$, is defined to be the endomorphism algebra $\End_{\La}(X)$, see \cite[Definition 6.3]{B}. It is known that ${\rm Aus}(\CX)$ is independent of the choice of the representation generator of $\CX$, up to Morita equivalence. In addition, there exists an equivalence of abelian categories $$e_{X}: \md\mbox{-}\CX\lrt\md\mbox{-}{\rm Aus}(\CX)$$ which is defined for any object $F\in\md\mbox{-}\CX$ by the evaluation functor of $F$ at $X$. This yields an equivalence of categories $$\underline{e}_{X}: \underline{\md}\mbox{-}\CX\lrt\md\mbox{-}{\rm Aus}(\underline{\CX}).$$Assume that $\La$ is of finite $\cm$-type, that is, $\gpr\mbox{-}\La$ is representation-finite. In this case, ${\rm Aus}(\ugpr\mbox{-}\La)$ is called the stable Cohen-Macaulay Auslander algebra of $\La$. Due to the latter equivalence, we can identify the category of modules over the stable Cohen-Macaulay Auslander algebra of $\La$ with the category of those finitely presented functors in $\md\mbox{-}(\gpr\mbox{-}\La)$ which vanish over the projective objects. Throughout the paper, we use freely this identification.
\end{s}

\begin{s}{\sc Morphism categories.} The morphism category of $\La$-modules denoted by ${\rm H}(\La)$ is a category
whose objects are all $\La$-homomorphisms $(A\st{f}\rt B)$ and morphisms are given by commutative squares. Moreover, the composition of morphisms is defined component-wise.
The category ${\rm H}(\La)$ is an abelian category. Indeed, it is equivalent to the category of finitely generated modules over the upper triangular matrix ring $T_2(\La)= \tiny {\left[\begin{array}{ll} \La & \La \\ 0 & \La \end{array} \right]}$.
The monomorphism category of $\La$, denoted by $\CS(\La)$, is the full subcategory of ${\rm H}(\La)$ consisting of all monomorphisms.
Since $\CS(\La)$ is an extension-closed subcategory of ${\rm H}(\La)$, it will be an exact category in the sense of Quillen, see \cite[Appendix A]{K1}.
The conflations in $\CS(\La)$ are those short exact sequences in ${\rm H}(\La)$ with terms in $\CS(\La)$. The full subcategory of $\CS(\La)$ whose objects are
all monomorphisms $(A\st{f}\rt B)$ for which $A, B, \mathsf{cok}f$ are Gorenstein projective will be denoted by $\CS(\gpr\mbox{-}\La)$.
\end{s}

\section{G-semisimple algebras} \label{Omega-algebras}
In this section, we will introduce the notion of G-semisimple algebras. Some basic properties and examples of G-semisimple algebras will be provided. In particular, indecomposable Gorenstein projective modules over a G-semisimple algebra are characterized. Next, this characterization is applied to describe the singularity categories over Gorenstein G-semisimple algebras.

Throughout this section, we assume that $\gpr\mbox{-} \La$ is contravariantly finite in $\md\mbox{-}\La.$ Recall that a subcategory $\CX$ of $\md\mbox{-}\La$ is resolving if it contains all projectives, closed under extensions, and the kernels of epimorphisms. On the other hand, since $\gpr\mbox{-} \La$ is resolving it also becomes a functorially finite subcategory of $\md\mbox{-}\La$ (due to \cite{KS}). So, by \cite{AS}, we can talk about the Auslander-Reiten theory in $\gpr\mbox{-} \La.$

By definition of the Gorenstein projective modules, one may observe that for each $G \in \gpr\mbox{-} \La$, $G^*=\Hom_{\La}(G, \La)$ is a
Gorenstein projective module in $\md\mbox{-}\La^{\rm{op}}.$ Then the duality $(-)^*:\prj \La \rt \prj \La^{\rm{op}}$ can be naturally
generalized to the duality $(-)^*:\gpr\mbox{-} \La \rt \gpr\mbox{-} \La^{\rm{op}}.$

Motivated by Section 5 of \cite{H1}, an unpublished paper by the first author, we state the following definition.

\begin{definition}
Let $\La$ be an Artin algebra. Then $\La$ is called a {\it G-semisimple algebra}, if $\md\mbox{-}(\ugpr\mbox{-} \La)$ is a semisimple abelian category, i.e. all whose objects are projective.
\end{definition}

In the sequel, we give some basic properties of G-semisimple algebras. First, we quote a couple of lemmas.

\begin{lemma}\label{Radical}
Let $P$ be an indecomposable projective object in $\mmod \La.$
\begin{itemize}
\item[$(i)$] If $f: G \rt \mathsf{rad}P$ is a minimal right $\gpr\mbox{-} \La$-approximation, then $i \circ f$ is a minimal right almost split morphism in $\gpr\mbox{-} \La$. Here, $i:\mathsf{rad}P\hookrightarrow P$ denotes the canonical inclusion.
\item[$(ii)$] If $g: G' \rt \mathsf{rad}P^*$ is a minimal right $\gpr\mbox{-} \La^{\rm{op}}$-approximation, then $(j \circ g)^*: P \rt (G')^*$ is a minimal left almost split morphism in $\gpr\mbox{-} \La$. Here, $j:\mathsf{rad}P^*\hookrightarrow P^*$ denotes the canonical inclusion in $\md\mbox{-}\La^{\rm op}$, and by abuse of notation, we identify $P^{**}$ and $P$ because of the natural isomorphism $P^{**}\simeq P$.
\end{itemize}
\end{lemma}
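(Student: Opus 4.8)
The plan is to prove part $(i)$ directly and then obtain part $(ii)$ by the duality $(-)^*:\gpr\mbox{-}\La\rt\gpr\mbox{-}\La^{\op}$, which sends minimal right almost split morphisms to minimal left almost split morphisms. For part $(i)$, I would first recall the classical fact (Auslander--Reiten) that for an indecomposable projective $P$, the inclusion $i:\mathsf{rad}P\hookrightarrow P$ is a minimal right almost split morphism in $\md\mbox{-}\La$: indeed, a morphism $h:X\rt P$ is a split epimorphism iff it is an epimorphism (since $P$ is projective with local endomorphism ring), iff $h$ is not contained in $\mathsf{rad}P$, iff $h$ does not factor through $i$; minimality follows since $i$ is a monomorphism with no nonzero summand of its source in the kernel. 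The key point is then to transfer this to $\gpr\mbox{-}\La$ via the approximation $f:G\rt\mathsf{rad}P$.

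The main steps are as follows. First, $i\circ f$ is not a split epimorphism in $\gpr\mbox{-}\La$, because its image lies in $\mathsf{rad}P$ and $P$ is indecomposable (a split epi onto $P$ would force $i\circ f$ to be an epimorphism, impossible). Second, given any $h:H\rt P$ with $H\in\gpr\mbox{-}\La$ that is not a split epimorphism, $h$ is not an epimorphism, so $h$ factors as $H\st{h'}\rt\mathsf{rad}P\st{i}\rt P$ for some $h'$ (using that $\mathsf{rad}P$ is exactly the set of non-surjective maps into $P$, equivalently that the image of $h$ is a proper submodule hence contained in the unique maximal submodule $\mathsf{rad}P$). Since $H\in\gpr\mbox{-}\La$ and $f$ is a right $\gpr\mbox{-}\La$-approximation of $\mathsf{rad}P$, the map $h'$ factors through $f$, say $h'=f\circ h''$; hence $h=i\circ f\circ h''$ factors through $i\circ f$. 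This gives that $i\circ f$ is right almost split in $\gpr\mbox{-}\La$. Third, for minimality: $f$ being a \emph{minimal} right approximation means $f$ has no nonzero direct summand in its kernel, equivalently every endomorphism $\varphi$ of $G$ with $f\circ\varphi=f$ is an automorphism; combined with the injectivity of $i$, any endomorphism $\varphi$ of $G$ with $(i\circ f)\circ\varphi=i\circ f$ satisfies $f\circ\varphi=f$, hence is an automorphism, so $i\circ f$ is a minimal right almost split morphism in $\gpr\mbox{-}\La$.

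For part $(ii)$, I would apply the duality $(-)^*$. Note $P^*$ is an indecomposable projective in $\md\mbox{-}\La^{\op}$ with $P^{**}\cong P$, and $(\mathsf{rad}P)^*$ relates to $\mathsf{rad}P^*$ appropriately; more directly, $g:G'\rt\mathsf{rad}P^*$ is a minimal right $\gpr\mbox{-}\La^{\op}$-approximation, so by part $(i)$ applied over $\La^{\op}$, the composite $j\circ g:G'\rt P^*$ is a minimal right almost split morphism in $\gpr\mbox{-}\La^{\op}$. Since $(-)^*:\gpr\mbox{-}\La^{\op}\rt\gpr\mbox{-}\La$ is a duality (contravariant equivalence), it carries minimal right almost split morphisms to minimal left almost split morphisms; applying it to $j\circ g$ and using $P^{**}\cong P$ yields that $(j\circ g)^*:P\rt(G')^*$ is a minimal left almost split morphism in $\gpr\mbox{-}\La$, as claimed.

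The step I expect to require the most care is verifying that a non-split-epimorphism $h:H\rt P$ with $H$ Gorenstein projective genuinely factors through $\mathsf{rad}P$ — i.e., that ``not a split epimorphism'' coincides with ``not an epimorphism'' here. This rests on $P$ being projective \emph{and} indecomposable (so $\End(P)$ is local), which makes any epimorphism onto $P$ split; one must make sure this reasoning is available in the subcategory $\gpr\mbox{-}\La$ (it is, since the splitting occurs in $\md\mbox{-}\La$ and $\gpr\mbox{-}\La$ is closed under direct summands, so the relevant splitting map lands back in $\gpr\mbox{-}\La$). Everything else — the approximation factorization and the minimality bookkeeping — is routine.
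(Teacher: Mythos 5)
Your proposal is correct and follows essentially the same route as the paper: reduce ``not a split epimorphism'' to ``image contained in $\mathsf{rad}P$'' using that $P$ is indecomposable projective, factor through $i$ and then through $f$ via the approximation property, deduce minimality from the minimality of $f$ together with $i$ being a monomorphism, and obtain $(ii)$ from $(i)$ by the duality $(-)^*$. Your write-up is in fact slightly more careful than the paper's, since you explicitly check that $i\circ f$ is itself not a split epimorphism and spell out why a non-split-epimorphism onto $P$ cannot be surjective.
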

\begin{proof}
$(i)$ We only need to prove that for any non-split epimorphism $h:G_0 \rt P$, there exists a morphism $l:G_0 \rt G$	so that $i\circ f\circ l=h.$ Since $h$ is a non-split epimorphism, $\text{Im}( h)$ is a proper submodule in $P$, i.e. $\text{Im}(h) \subseteq \mathsf{rad}P.$ So we can write $h=i \circ s$, for some $s: G_0 \rt \mathsf{rad}P$. Now since $f$ is a right $\gpr$-$\La$-approximation, there is a morphism $l: G_0 \rt G$ such that $f \circ l=s$ and clearly it works for what we need. Moreover, it is easy to observe that $i \circ f$ is minimal by considering the fact that both $i$ and $f$ are minimal, with $i$ being an inclusion.

$(ii)$ The result follows from $(i)$ and the duality $(-)^*:\gpr$-$\La \rt \gpr$-$\La^{\rm{op}}.$
\end{proof}

\begin{lemma}\label{Lemma1}
Let $\La$ be a {\rm G}-semisimple algebra. Then any short exact sequence in $\gpr$-$\La$ is a direct sum of the short exact sequences of the form $0 \rt \Omega_{\La}(A) \rt P \rt A \rt 0$, $0 \rt 0 \rt B \st{1} \rt B \rt 0$ and $0 \rt C \st{1} \rt C \rt 0 \rt 0$ for some $A, B $ and $C$ in $\gpr$-$\La$.
\end{lemma}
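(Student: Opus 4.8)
The plan is to translate the statement into the language of finitely presented functors and use the semisimplicity hypothesis directly. Recall that the category $\md\mbox{-}(\ugpr\mbox{-}\La)$ can be identified with the full subcategory of $\md\mbox{-}(\gpr\mbox{-}\La)$ of functors vanishing on projectives, and that $\La$ being G-semisimple means every object of $\md\mbox{-}(\ugpr\mbox{-}\La)$ is both projective and injective, so in particular every such functor is a (finite) direct sum of simple functors $S_G = \ugpr\mbox{-}\La(-,G)/\mathsf{rad}$ indexed by the indecomposable non-projective Gorenstein projectives $G$. (Note $\La$ is $\cm$-finite by Proposition~\ref{CM-finite}, so these are finite in number, but I will not actually need finiteness for this lemma.) The connection to short exact sequences is the standard one: for a short exact sequence $\eta: 0\rt A\st{f}\rt B\st{g}\rt C\rt 0$ in $\gpr\mbox{-}\La$, applying $\ugpr\mbox{-}\La(-,?)$ on the stable category and taking the deflation-induced connecting map produces, after a degree shift, a finitely presented functor; more concretely, each $\eta$ determines a class in an Ext-group of functors, and the point is to show that under G-semisimplicity this class forces $\eta$ to split off the indicated pieces.

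First I would reduce to the case where $A,B,C$ are indecomposable, since a short exact sequence in an exact category with the Krull--Schmidt property can be decomposed (up to isomorphism of short exact sequences) according to the indecomposable summands of the middle term, and $\gpr\mbox{-}\La$ has this property. So assume $\eta$ is a short exact sequence with indecomposable terms; I must show it is isomorphic to one of the three listed types. The trivial cases $0\rt 0\rt B\st{1}\rt B\rt 0$ and $0\rt C\st{1}\rt C\rt 0\rt 0$ correspond to $\eta$ being a split sequence, so the real content is: \emph{every non-split short exact sequence with indecomposable terms is of the form $0\rt\Omega_\La(A)\rt P\rt A\rt 0$ with $P$ projective.} For this, suppose $C$ is non-projective (if $C$ is projective the sequence splits since $C\in\gpr\mbox{-}\La$ and $\Ext^1_\La(C,-)$ vanishes on $\gpr\mbox{-}\La$... actually one needs $C$ projective $\Rightarrow$ split, which is immediate). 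Then the non-split sequence $\eta$ gives a nonzero element of $\Ext^1_{\gpr\mbox{-}\La}(C,A)$. The key computation: this Ext-group is naturally a subquotient related to $\uHom$ in the functor category, and I claim the image of $\eta$ under the functor $(-)\rt (-, A)$ landing in $\md\mbox{-}(\ugpr\mbox{-}\La)$ (via the defect/cokernel of $(-,f): (-,A)\rt(-,B)$) must be a projective-injective simple functor. Concretely, the cokernel $F_\eta = \mathrm{coker}\big((-,B)\rt(-,C)\big)$ restricted to $\ugpr\mbox{-}\La$ is finitely presented and, by G-semisimplicity, semisimple hence of finite length; comparing with the case where $B$ is projective (minimal: $B=P$, the projective cover of $C$, giving $F_\eta = S_C$ the simple top) one shows that indecomposability of $\eta$ forces $F_\eta\cong S_C$, and then tracking back forces $B$ to be projective and $A\cong\Omega_\La(C)$.

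The main obstacle I anticipate is making precise the passage between short exact sequences in $\gpr\mbox{-}\La$ and the functorial data in $\md\mbox{-}(\ugpr\mbox{-}\La)$, and in particular showing that an indecomposable (in $\mathrm{H}(\La)$, or as an object of the category of short exact sequences) non-split $\eta$ with indecomposable terms cannot have a middle term with a non-projective summand. I would handle this by an argument with minimal projective presentations: if $B$ had a non-projective indecomposable summand, one could use Lemma~\ref{Radical}(i) — the minimal right $\gpr\mbox{-}\La$-approximation of $\mathsf{rad}P$ composed with the inclusion is a right almost split morphism, hence the almost split sequences in $\gpr\mbox{-}\La$ ending at projectives have a controlled shape — together with the semisimplicity (which kills all "higher" extensions, i.e. forces $\Ext^1$ in the functor category to vanish) to derive a contradiction, typically by producing a splitting of a subsequence. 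A cleaner route, if available, is to invoke that $\md\mbox{-}(\ugpr\mbox{-}\La)$ semisimple implies the global dimension of this functor category is $0$, which by an Auslander-type argument (analogous to \cite[Theorem 10.7]{AR1}) bounds the "complexity" of morphisms in $\ugpr\mbox{-}\La$ so severely that the only non-split short exact sequences are the syzygy sequences; I would then just cite/adapt that principle rather than redo it. I expect the bookkeeping to be routine once the right functorial dictionary is set up; the genuine idea is simply "semisimple functor category $\Rightarrow$ only non-split extensions are $0\rt\Omega A\rt P\rt A\rt 0$."
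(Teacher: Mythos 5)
Your overall strategy---pass to the functor category, observe that the cokernel functor of the induced sequence of representables lives in $\md\mbox{-}(\ugpr\mbox{-}\La)$ and is therefore projective by hypothesis---is exactly the paper's starting point. But there are two genuine problems with the execution. First, your opening reduction is false: it is \emph{not} true that a short exact sequence in a Krull--Schmidt exact category decomposes according to the indecomposable summands of its middle term. Almost split sequences $0\rt A\rt B_1\oplus B_2\rt C\rt 0$ are the standard counterexample: the middle term is decomposable but the sequence is not a direct sum of two short exact sequences. The conclusion of the lemma is precisely that under G-semisimplicity every short exact sequence \emph{does} decompose into the three listed types; assuming a decomposition into indecomposable-term pieces at the outset begs the question (and does not even match the target, since the middle term $P$ of the syzygy sequence is generally decomposable). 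Second, the heart of the argument---why semisimplicity forces $F_\eta\cong S_C$, why this forces $B$ projective, and why the whole sequence then has the stated shape---is only gestured at; you explicitly defer it to ``bookkeeping'' and to adapting an Auslander-type principle you do not carry out.

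The paper closes the argument with one clean observation that your proposal is missing. From $0\rt G_2\rt G_1\rt G_0\rt 0$ one gets the exact sequence $0\rt(-,G_2)\rt(-,G_1)\rt(-,G_0)\rt F\rt 0$ in $\md\mbox{-}(\gpr\mbox{-}\La)$, and since $F$ vanishes on projectives and $\md\mbox{-}(\ugpr\mbox{-}\La)$ is semisimple, $F\simeq(-,\underline{G})$ for some $G$. Now compare this four-term exact sequence with the \emph{minimal} projective resolution $0\rt(-,\Omega_{\La}(G))\rt(-,P)\rt(-,G)\rt(-,\underline{G})\rt 0$, where $P\rt G$ is the projective cover: any projective resolution is the direct sum of the minimal one and split exact complexes. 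Unwinding this through Yoneda gives exactly the decomposition into $0\rt\Omega_{\La}(A)\rt P\rt A\rt 0$ plus the two trivial types, with no case analysis and no reduction to indecomposables. You should replace your Krull--Schmidt reduction and the subsequent sketch with this comparison-of-resolutions step.
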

\begin{proof}
Take a short exact sequence $0 \rt G_2 \rt G_1 \rt G_0 \rt 0$ in $\gpr$-$\La$. This sequence induces the exact sequence $$ 0 \rt (-, G_2) \rt (-, G_1) \rt (-, G_0) \rt F \rt 0 \ \ \ \ (1)$$
in $\md$-$(\gpr$-$\La).$ Since $\md$-$(\underline{\gpr}$-$\La)$ is a semisimple abelian category, $F \simeq (-, \underline{G})$ for some object $G$ in $\gpr$-$\La$. On the other hand, it is known that the minimal projective resolution of $(-, \underline{G})$ is as the following $$ 0 \rt (-, \Omega_{\La}(G)) \rt (-, P) \rt (-, G) \rt (-, \underline{G}) \rt 0 \ \ \ (2) $$ where $P \rt G $ is the projective cover of $G.$ By comparing sequences $(1)$ and $(2)$ as two projective resolutions of $F$ in $\md$-$(\gpr$-$\La)$, and using this fact that the latter is minimal, we get our result. Indeed, the fact we have used here is known in the homological algebra, that is, any projective resolution of an object in a perfect category is a direct sum of the minimal projective resolution of the given object and possibly some split exact complexes.
\end{proof}

Recall that an Artin algebra $\La$ is said to be $\cm$-finite, if there are only finitely many isomorphism classes of indecomposable finitely generated Gorenstein projective $\La$-modules.

\begin{proposition}\label{CM-finite}
Let $\La$ be a {\rm G}-semisimple algebra. Then $\La$ is $\cm$-finite.
\end{proposition}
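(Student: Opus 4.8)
The plan is to show that $\gpr$-$\La$ has only finitely many indecomposable objects up to isomorphism by exploiting Lemma \ref{Lemma1}, which severely restricts the short exact sequences (hence the almost split sequences) available in $\gpr$-$\La$. The starting point is that $\gpr$-$\La$ is a resolving, functorially finite subcategory of $\md$-$\La$, so by \cite{AS} it has almost split sequences: for every indecomposable non-projective $G\in\gpr$-$\La$ there is an almost split sequence $0\rt\tau_{\CG}G\rt E\rt G\rt 0$ in $\gpr$-$\La$. Now apply Lemma \ref{Lemma1} to this sequence. Since an almost split sequence is indecomposable as an object of the category of short exact sequences (its end terms have local endomorphism rings and it is non-split), it cannot have a nonzero direct summand of the form $0\rt 0\rt B\st{1}\rt B\rt 0$ or $0\rt C\st{1}\rt C\rt 0\rt 0$; therefore the almost split sequence ending at $G$ must itself be (isomorphic to) one of the sequences $0\rt\Omega_{\La}(A)\rt P\rt A\rt 0$ with $P$ projective. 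Comparing end terms, $G\cong A$ and the middle term $E\cong P$ is projective.

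The key consequence is that for every indecomposable non-projective $G\in\gpr$-$\La$, the almost split sequence ending at $G$ has \emph{projective middle term}. I would next combine this with standard Auslander-Reiten theory over the functorially finite subcategory $\gpr$-$\La$. Recall the mesh/additivity structure of the stable AR quiver: if $0\rt\tau_{\CG}G\rt E\rt G\rt 0$ is almost split and $E$ has no non-projective indecomposable summands, then in the stable AR quiver of $\gpr$-$\La$ the vertex $[\tau_{\CG}G]$ has no arrows \emph{into} it coming from non-projective vertices other than via $G$ — more precisely, the only arrows in the stable AR quiver ending at $G$ come from summands of $E$, all of which are projective, hence do not appear as vertices. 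Thus $G$ is a source in each connected component of the stable AR quiver, i.e. every component of the stable AR quiver of $\gpr$-$\La$ consists of a single vertex. Since $\La$ is an Artin algebra, $\gpr$-$\La$ is a Krull-Schmidt category with only finitely many indecomposable \emph{projective} objects, and the remaining argument is to bound the non-projective ones.

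To finish, I would argue that there are only finitely many indecomposable non-projective Gorenstein projectives directly: by Lemma \ref{Radical}(i), for each indecomposable projective $P$ the module $\rad P$ admits a minimal right $\gpr$-$\La$-approximation $f\colon G_P\rt\rad P$, and $i\circ f\colon G_P\rt P$ is a minimal right almost split morphism in $\gpr$-$\La$. Hence every indecomposable non-projective $G\in\gpr$-$\La$ that maps irreducibly to a projective is a summand of some $G_P$; and every indecomposable non-projective $G$ admits an almost split sequence whose middle term we have just shown is projective, so $G$ receives an irreducible map from a projective, and dually (using Lemma \ref{Radical}(ii)) maps irreducibly to a projective. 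Therefore every indecomposable non-projective Gorenstein projective is a direct summand of $G_P$ for some indecomposable projective $P$, of which there are finitely many; so $\gpr$-$\La$ has finitely many indecomposables and $\La$ is $\cm$-finite. The main obstacle I anticipate is making the "every indecomposable non-projective maps irreducibly \emph{to} a projective" step precise — one must pass through the stable AR quiver carefully (or invoke the dual of the middle-term computation via the almost split sequence \emph{starting} at $G$ together with $\Omega_{\La}^{-1}$), ensuring that the projectivity of middle terms transfers correctly under the duality $(-)^*$ and that no delicacy about $\gpr$-$\La$ versus $\md$-$\La$ irreducible maps is glossed over.
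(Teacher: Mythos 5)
Your proof is correct and takes essentially the same route as the paper: Lemma \ref{Lemma1} forces the almost split sequence ending at an indecomposable non-projective $G$ to be $0\rt\Omega_{\La}(G)\rt P\rt G\rt 0$ with projective middle term, and Lemma \ref{Radical} together with the finiteness of the set of indecomposable projective modules then bounds the indecomposable non-projectives. The only difference is cosmetic: the paper reads $G$ off as a direct summand of the target of the minimal \emph{left} almost split map out of a projective summand of $P$ (Lemma \ref{Radical}(ii)), whereas you dualize to the almost split sequence starting at $G$ and use the minimal \emph{right} almost split map into a projective (Lemma \ref{Radical}(i)) --- which works because $\gpr$-$\La$ is Frobenius, so such a sequence exists --- and your stable-AR-quiver detour is not needed.
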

\begin{proof}
Assume that $G$ is an indecomposable non-projective Gorenstein projective $\La$-module. According to Lemma \ref{Lemma1}, the almost split sequence in $\gpr$-$\La$ ending at $G$, is of the form $0 \rt \Omega_{\La}(G) \rt P \st{f} \rt G \rt 0$ in $\gpr$-$\La$, where $P\rt G$ is the projective cover of $G$. Suppose that $Q$ is an indecomposable direct summand of $P$. The natural injection $f_{\mid Q}: Q \rt G$ is irreducible by using the general facts in the theory of almost split sequences for subcategories, e.g. \cite[Theorem 2.2.2]{Kr}. In view of Lemma \ref{Radical} there is a minimal left almost split morphism $g: Q \rt Y$ in $\gpr$-$\La$. Hence $G$ is a direct summand of $Y.$ This indeed means that any indecomposable non-projective Gorenstein projective module is related to an indecomposable projective module via a minimal left almost split morphism. Now the uniqueness of the minimal left almost split morphisms, together with the fact that $\md\mbox{-}\La$ contains only finitely many indecomposable projective modules up to isomorphism, would imply the desired result.
\end{proof}

\begin{definition}
An indecomposable non-projective Gorenstein projective $\La$-module $G$ is called {\em G-semisimple}, provided that the canonical short exact sequence $0 \rt \Omega_{\La}(G)\rt P_G\rt G\rt 0$ is an almost split sequence in $\gpr$-$\La$.
\end{definition}
One may have the following easy characterization of G-semisimple algebras in terms of $G$-semisimple modules.

\begin{lemma}
$\La$ is a  G-semisimple algebra if and only if every Gorenstein projective module is  G-semisimple and $\La$ is $\cm$-finite.
\end{lemma}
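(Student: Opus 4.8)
The plan is to prove the two implications separately; the forward one is essentially recorded inside the proof of Proposition~\ref{CM-finite}, while the converse hinges on a single factorisation through a projective cover.

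\emph{($\Rightarrow$)} If $\La$ is G-semisimple then it is $\cm$-finite by Proposition~\ref{CM-finite}, so the only thing left is to see that every indecomposable non-projective $G\in\gpr\mbox{-}\La$ is G-semisimple. As $\gpr\mbox{-}\La$ is functorially finite in $\md\mbox{-}\La$, there is an almost split sequence $0\rt A\rt B\rt G\rt 0$ in $\gpr\mbox{-}\La$; by Lemma~\ref{Lemma1} it decomposes into summands of the three listed shapes, and since it is non-split with $G$ indecomposable it must be isomorphic to $0\rt\Omega_\La(G)\rt P_G\rt G\rt 0$ for a projective cover $P_G\rt G$ (a nonzero split-exact summand would violate the minimality or the indecomposability built into an almost split sequence). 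This is precisely the computation already carried out in the proof of Proposition~\ref{CM-finite}, and it says exactly that $G$ is G-semisimple.

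\emph{($\Leftarrow$)} Assume $\La$ is $\cm$-finite and every indecomposable non-projective Gorenstein projective module is G-semisimple. Fix an additive generator $X$ of $\gpr\mbox{-}\La$. Through the equivalence $\underline{e}_{X}$ of Section~2 and the identification $\md\mbox{-}(\ugpr\mbox{-}\La)\simeq\md\mbox{-}{\rm Aus}(\ugpr\mbox{-}\La)$, the abelian category $\md\mbox{-}(\ugpr\mbox{-}\La)$ is semisimple if and only if the Artin algebra ${\rm Aus}(\ugpr\mbox{-}\La)=\uEnd_\La(X)$ has zero Jacobson radical, equivalently the radical of the Krull--Schmidt category $\ugpr\mbox{-}\La$ vanishes; that is, $\underline{\Hom}_\La(G,G')=0$ for all non-isomorphic indecomposables $G,G'$ of $\ugpr\mbox{-}\La$, and $\uEnd_\La(G)$ is a division ring for every indecomposable $G$. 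So it suffices to prove these two vanishing statements. Here is the point: let $G,G'$ be indecomposable non-projective Gorenstein projectives and let $f\colon G\rt G'$ be a morphism in $\gpr\mbox{-}\La$ that is not a split epimorphism. Since $G'$ is G-semisimple, the canonical sequence $0\rt\Omega_\La(G')\rt P_{G'}\st{\pi}\rt G'\rt 0$ is almost split, so $\pi$ is right almost split, $f$ factors through $\pi$, and hence $\underline{f}=0$ because $P_{G'}$ is projective. If $G\not\simeq G'$, no morphism $G\rt G'$ is a split epimorphism (a section would make the indecomposable $G'$ a direct summand of the indecomposable $G$), whence $\underline{\Hom}_\La(G,G')=0$. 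If $G=G'$, every non-unit of the local ring $\End_\La(G)$ is a non-split epimorphism, hence factors through $P_G$; thus $\mathsf{rad}\,\End_\La(G)$ coincides with the ideal of endomorphisms factoring through a projective, and $\uEnd_\La(G)=\End_\La(G)/\mathsf{rad}\,\End_\La(G)$ is a division ring (nonzero, since $1_G$ does not factor through a projective, $G$ being non-projective). Therefore $\mathsf{rad}(\ugpr\mbox{-}\La)=0$, as required.

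I do not expect a genuine obstacle. The forward direction is already available from Proposition~\ref{CM-finite}, and the converse reduces, through the standard dictionary ``$\md\mbox{-}(\ugpr\mbox{-}\La)$ semisimple $\Leftrightarrow$ ${\rm Aus}(\ugpr\mbox{-}\La)$ semisimple $\Leftrightarrow$ $\mathsf{rad}(\ugpr\mbox{-}\La)=0$'', to the one-line factorisation above. The only care needed is bookkeeping: discarding the split-exact summands of the almost split sequence in $(\Rightarrow)$, and correctly tracking the ideal of morphisms factoring through projectives in $(\Leftarrow)$. One may also observe that the argument of Proposition~\ref{CM-finite} in fact deduces $\cm$-finiteness from the other hypothesis, so the $\cm$-finiteness assumption is redundant in the statement; it is kept only for transparency.
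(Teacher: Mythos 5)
Your forward direction coincides with the paper's: $\cm$-finiteness comes from Proposition \ref{CM-finite}, and the identification of the almost split sequence ending at $G$ with the canonical sequence $0\rt\Omega_{\La}(G)\rt P_G\rt G\rt 0$ comes from Lemma \ref{Lemma1}. Your converse, however, takes a genuinely different route. The paper argues functorially: every object of $\md\mbox{-}(\ugpr\mbox{-}\La)$ has finite length, every simple functor $S$ arises from an almost split sequence $0\rt\tau_{\CG}M\rt B\rt M\rt 0$ via the exact sequence $0\rt(-,\tau_{\CG}M)\rt(-,B)\rt(-,M)\rt S\rt 0$, and since the hypothesis forces $B$ to be projective, $S\simeq(-,\underline{M})$ is a representable, hence projective, functor; semisimplicity then follows from the finite filtration by simples. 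You instead translate semisimplicity of $\md\mbox{-}(\ugpr\mbox{-}\La)$ into the vanishing of the radical of the stable Cohen--Macaulay Auslander algebra, and verify this directly: any morphism between indecomposable non-projective Gorenstein projectives that is not a split epimorphism factors through the right almost split map $P_{G'}\rt G'$, hence through a projective, hence is stably zero. Both arguments are correct; the paper's stays inside the functor-category formalism used throughout and makes visible that representability of the simples is the decisive point, while yours is more elementary, using only the right-almost-split property of the projective cover map together with the standard dictionary between semisimplicity of a module category and vanishing of the Jacobson radical. Your closing observation that the $\cm$-finiteness hypothesis is in principle recoverable from the G-semisimplicity of all modules (by rerunning the proof of Proposition \ref{CM-finite}, whose only input is that the canonical sequences are almost split) is fair under the section's standing assumption that $\gpr\mbox{-}\La$ is contravariantly finite. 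One small wording point: ``every non-unit of $\End_{\La}(G)$ is a non-split epimorphism'' should read ``is not a split epimorphism''; the argument itself is unaffected.
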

\begin{proof}
First, assume that $\Lambda$ is a G-semisimple algebra. By Proposition \ref{CM-finite}, $\Lambda$ is $\cm$-finite. Moreover,
since because any almost split sequence does not split, Lemma \ref{Lemma1} yields that any Gorenstein projective module is G-semisimple.\\
Conversely, if $\Lambda$ is $\cm$-finite, as mentioned in the introduction, $\md$-$(\underline{\gpr}$-$\La$),
can be considered as a module category over some algebra.
Hence, there exists a finite filtration of subfunctors such that each factor is a simple functor. Every simple functor $S$ in $\md$-$(\underline{\gpr}$-$\La$) corresponds to an almost split sequence in $\gpr$-$\La$. More precisely, there exists an indecomposable module $M$ with an almost split sequence $0 \rightarrow \tau_{\mathcal{G}} M \rightarrow B \rightarrow M\rt 0$ such that there exists an exact sequence
$0 \rightarrow (-, \tau_{\mathcal{G}} M) \rightarrow (-, B) \rightarrow (-, M) \rightarrow S \rightarrow 0$
in $\md$-$(\gpr$-$\La)$. Since by our assumption $B$ is projective
it follows that $S = (-, \underline{M})$, which is a representable functor in $\md$-$(\ugpr$-$\La)$. Therefore, every simple module is projective in $\md$-$(\ugpr$-$\La)$. In conclusion, these results establish the desired result.

\end{proof}

The following result characterizes the indecomposable non-projective objects in $\CS(\gpr$-$\La)$.
\begin{proposition}\label{pro11}
Let $\La$ be a {\rm G}-semisimple algebra. Then, any indecomposable non-projective object in $\CS(\gpr\mbox{-}\La)$ is isomorphic to either $(0\rt G)$ or $(G\st{1}\rt G)$ or $(\Omega_{\La} G\st{i}\rt P)$, where $i$ the kerenl of the projective cover $P\rt G$, for some indecomposable non-projective Gorenstein projective module $G$. In particular, $\CS(\gpr$-$\La)$ is of finite representation type, or equivalently $T_2(\La)$ is $\cm$-finite.
\end{proposition}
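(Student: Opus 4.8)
\noindent\emph{Proof proposal.}
The plan is to read the classification off Lemma~\ref{Lemma1}, so the first step is to set up the passage between $\CS(\gpr\mbox{-}\La)$ and short exact sequences in $\gpr\mbox{-}\La$. Sending a monomorphism $(A\st{f}\rt B)$ to its canonical conflation $0\rt A\st{f}\rt B\rt\Coker f\rt 0$ identifies $\CS(\gpr\mbox{-}\La)$ with the category of short exact sequences all three of whose terms lie in $\gpr\mbox{-}\La$: a morphism in ${\rm H}(\La)$ between two monomorphisms is already determined by its middle component and is the same datum as a morphism of the associated short exact sequences. In particular, a direct-sum decomposition of an object of $\CS(\gpr\mbox{-}\La)$ is exactly a direct-sum decomposition of its conflation. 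Since $\La$ is an Artin algebra, ${\rm H}(\La)\cong\md\mbox{-}T_2(\La)$ is a Krull--Schmidt category, and $\CS(\gpr\mbox{-}\La)$ is closed under direct summands inside ${\rm H}(\La)$ (a summand of a monomorphism with Gorenstein projective terms and Gorenstein projective cokernel again has these properties); hence $\CS(\gpr\mbox{-}\La)$ is Krull--Schmidt.

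Next, take an indecomposable non-projective object $(A\st{f}\rt B)$ of $\CS(\gpr\mbox{-}\La)$, apply Lemma~\ref{Lemma1} to its conflation $0\rt A\st{f}\rt B\rt\Coker f\rt 0$, and decompose the Gorenstein projective modules involved into indecomposable summands (using additivity of syzygies and of projective covers). This writes the conflation as a finite direct sum of short exact sequences of the three shapes $0\rt\Omega_{\La}G\st{i}\rt P_G\rt G\rt 0$ (with $P_G\rt G$ a projective cover of an indecomposable $G$), $0\rt 0\rt G\st{1}\rt G\rt 0$, and $0\rt G\st{1}\rt G\rt 0\rt 0$, with $G$ indecomposable Gorenstein projective. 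Here one needs that the canonical conflation of an indecomposable module $G$ is indecomposable as a short exact sequence: if it decomposed, then since its cokernel term $G$ is indecomposable one summand would be of the form $0\rt A'\st{1}\rt A'\rt 0\rt 0$ with $A'\neq 0$, placing the nonzero projective direct summand $A'$ of $P_G$ inside $\Ker(P_G\rt G)$ and contradicting the superfluousness of that kernel. By Krull--Schmidt, $(A\st{f}\rt B)$ is therefore isomorphic to one of $(\Omega_{\La}G\st{i}\rt P_G)$, $(0\rt G)$, or $(G\st{1}\rt G)$ with $G$ indecomposable.

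It remains to separate the projective cases. Using $\CS(\gpr\mbox{-}\La)=\gpr\mbox{-}T_2(\La)$ (see \cite{LZh2, ZX}) and the fact that the indecomposable projective $T_2(\La)$-modules are precisely the $(P\st{1}\rt P)$ and $(0\rt P)$ with $P$ an indecomposable projective $\La$-module, one checks that $(0\rt G)$ and $(G\st{1}\rt G)$ are projective exactly when $G$ is projective, whereas $(\Omega_{\La}G\st{i}\rt P_G)$ with $G$ non-projective has neither of those two shapes (its structure map is not an isomorphism and its source $\Omega_{\La}G$ is nonzero), so it is non-projective. Removing the projective cases leaves exactly the three families of the statement, indexed by indecomposable non-projective Gorenstein projective modules $G$. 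The ``in particular'' follows at once: by Proposition~\ref{CM-finite} there are only finitely many such $G$, and adjoining the finitely many indecomposable projective $T_2(\La)$-modules shows that $\CS(\gpr\mbox{-}\La)=\gpr\mbox{-}T_2(\La)$ has only finitely many indecomposables, i.e.\ $T_2(\La)$ is $\cm$-finite.

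I expect the only genuinely delicate point to be the interchange between direct-sum decompositions in the monomorphism category and decompositions of short exact sequences, together with the attendant indecomposability of the canonical conflation $0\rt\Omega_{\La}G\rt P_G\rt G\rt 0$ of an indecomposable module; everything else is bookkeeping on top of Lemma~\ref{Lemma1} and Proposition~\ref{CM-finite}.
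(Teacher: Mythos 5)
Your proposal is correct and follows essentially the same route as the paper: pass from an indecomposable object of $\CS(\gpr\mbox{-}\La)$ to its induced short exact sequence, invoke Lemma~\ref{Lemma1} to split that sequence into the three canonical types, and deduce finiteness from Proposition~\ref{CM-finite}. The paper states this more tersely; you additionally spell out the Krull--Schmidt bookkeeping, the indecomposability of the conflation $0\rt\Omega_{\La}G\rt P_G\rt G\rt 0$, and the separation of the projective cases, all of which are correct and are left implicit in the paper's proof.
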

\begin{proof}
Take an arbitrary indecomposable object $(X\st{f}\rt Y)$ in $\CS(\gpr$-$\La)$. It induces the short exact sequence $\lambda: 0 \rt X\st{f}\rt Y\rt \mathsf{cok}f\rt 0$ with all terms in $\gpr$-$\La.$ Lemma \ref{Lemma1} implies that the sequence $\lambda$ is isomorphic to
the short exact sequences of the form either $0 \rt \Omega_{\La}(G) \rt P \rt G \rt 0$ or $0 \rt 0 \rt G \st{1} \rt G \rt 0$ or $0 \rt G \st{1} \rt G \rt 0 \rt 0$, for some indecomposable module $G$ in $\gpr$-$\La.$ Hence we get the first assertion.\\ The second assertion follows from the first one, utilizing two key facts. Firstly, G-semisimple algebras are $\cm$-finite (as stated in Proposition \ref{CM-finite}). Secondly, the operation of taking a projective cover provides us with only a finite number of isomorphism classes of indecomposable objects in $\CS(\gpr$-$\La)$. Therefore, based on these observations, we can conclude that the proof is complete.
\end{proof}

\color{black}

Recall from \cite[Lemma 3.4]{C} that for a semisimple abelian category $\CA $ and an auto-equivalence $\Sigma $ on $\CA$, there is a unique triangulated structure on $\CA$ with $\Sigma$ being the translation functor. Indeed, all the triangles split. We denote the resulting triangulated category by $(\CA, \ \Sigma)$. We call a triangulated category semisimple, provided that it is  triangle equivalent to $(\CA, \ \Sigma)$ for some semisimple abelian category $\CA$ and an equivalence on it

Recall from \cite{RV} that a Serre functor for an $R$-linear triangulated category ($R$ is a commutative Artinian ring) $\CT$ is an auto-equivalence $S:\CT\rt \CT$
together with an isomorphism $D\Hom_{\CT}(X, -)\simeq \Hom_{\CT}(-, S(X))$ for each $X \in \CT$. Here $D$ stands for the ordinary duality.

\begin{proposition}\label{Omega-algebra}
Let $\La$ be a {\rm G}-semisimple algebra. Then the following assertions hold.
\begin{itemize}

\item[$(i)$] $\ugpr$-$\La$ is a semisimple triangulated category.

\item[$(ii)$] For any non-projective Gorenstein projective indecomposable module $G$, the (relative) Auslander-Reiten translation of $G$ in $\gpr$-$\La$ is the first syzygy $\Omega_{\La}(G).$

\item[$(iii)$] The Serre functor $S_{\CG}$ over $\ugpr$-$\La$ acts on objects by the identity, i.e. $S_{\CG}(G)\simeq G$ for each $G$ in $\ugpr$-$\La$.
\end{itemize}
\end{proposition}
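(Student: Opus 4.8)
The plan is to derive all three parts from Lemma~\ref{Lemma1}, which under the standing hypotheses forces every short exact sequence in $\gpr\mbox{-}\La$ to be, up to isomorphism, a finite direct sum of the three listed types. Part $(i)$ will come from transporting this classification to the triangulated structure on $\ugpr\mbox{-}\La$; part $(ii)$ is essentially the computation already made inside the proof of Proposition~\ref{CM-finite}; and part $(iii)$ will follow formally from $(i)$ and $(ii)$ using the Reiten--Van den Bergh correspondence between Serre functors and almost split triangles.

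For $(i)$, I would first recall that $\gpr\mbox{-}\La$ is a Frobenius exact category --- its projective and injective objects both being $\prj\La$ --- so that $\ugpr\mbox{-}\La$ carries its usual triangulated structure with suspension $\Sigma=\Omega^{-1}_{\La}$, and the distinguished triangles are exactly those isomorphic to a triangle $A\rt B\rt C\rt\Sigma A$ induced by a conflation $0\rt A\rt B\rt C\rt 0$. It then suffices to examine the triangle attached to each of the three standard conflations of Lemma~\ref{Lemma1}. A split conflation $0\rt 0\rt B\st{1}\rt B\rt 0$, resp.\ $0\rt C\st{1}\rt C\rt 0\rt 0$, yields a trivial (contractible) triangle. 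For $0\rt\Omega_{\La}(A)\rt P\rt A\rt 0$ with $P$ projective, the middle term becomes $0$ in $\ugpr\mbox{-}\La$, so the associated triangle reads $\Omega_{\La}(A)\rt 0\rt A\st{\delta}\rt\Sigma\,\Omega_{\La}(A)$; since a triangle with zero middle term forces $\delta$ to be an isomorphism, this is the rotation of a trivial triangle, again contractible. Hence every distinguished triangle in $\ugpr\mbox{-}\La$ is a finite direct sum of trivial triangles and their rotations. A short Krull--Schmidt argument then upgrades this to semisimplicity of the underlying additive category: completing a nonzero morphism $f\colon G\rt G'$ between indecomposables to a distinguished triangle and decomposing it into trivial pieces, matching the (indecomposable) source and target forces $G\cong G'$ with $f$ invertible, so $\ugpr\mbox{-}\La$ has no nonzero morphisms between non-isomorphic indecomposables and each $\uEnd_{\La}(G)$ is a division ring. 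Therefore $\ugpr\mbox{-}\La\simeq(\CA,\Sigma)$ with $\CA$ the semisimple abelian category $\ugpr\mbox{-}\La$, i.e.\ $\ugpr\mbox{-}\La$ is a semisimple triangulated category in the sense recalled before the statement (cf.\ \cite[Lemma 3.4]{C}).

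For $(ii)$, since $\gpr\mbox{-}\La$ is functorially finite and resolving in $\md\mbox{-}\La$ it has almost split sequences; I would take the one ending at the indecomposable non-projective $G$, say $0\rt\tau_{\CG}(G)\rt E\rt G\rt 0$, and apply Lemma~\ref{Lemma1}. As $G$ is indecomposable and an almost split sequence does not split, this conflation must reduce to a single summand of the first type, $0\rt\Omega_{\La}(G)\rt P_G\rt G\rt 0$ with $P_G\rt G$ a projective cover; hence $\tau_{\CG}(G)\cong\Omega_{\La}(G)$ --- this is exactly the argument carried out in the proof of Proposition~\ref{CM-finite}. For $(iii)$, since $\gpr\mbox{-}\La$ has almost split sequences the triangulated category $\ugpr\mbox{-}\La$ has almost split triangles, so being Hom-finite Krull--Schmidt and $R$-linear it admits, by \cite{RV}, a Serre functor $S_{\CG}$ with $\tau_{\CG}\cong\Sigma^{-1}\circ S_{\CG}$, that is $S_{\CG}\cong\Sigma\circ\tau_{\CG}$. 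Evaluating on objects and using $(ii)$, $S_{\CG}(G)\cong\Sigma\,\tau_{\CG}(G)\cong\Omega^{-1}_{\La}\Omega_{\La}(G)\cong G$ for every indecomposable, hence for every $G$ in $\ugpr\mbox{-}\La$. (Alternatively, avoiding the explicit relation: the defining isomorphism $D\,\underline{\Hom}_{\La}(G,-)\cong\underline{\Hom}_{\La}(-,S_{\CG}(G))$ together with $(i)$ --- which says $\underline{\Hom}_{\La}(G,-)$ is supported only at $G$ --- forces the representable functor on the right to be supported only at $G$, and indecomposability of $S_{\CG}(G)$ then gives $S_{\CG}(G)\cong G$.)

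The only real obstacle lies in the bookkeeping for $(i)$: one must correctly identify the triangle attached to the non-split conflation $0\rt\Omega_{\La}(A)\rt P\rt A\rt 0$ --- the subtle point being that its connecting map $\delta$ is an \emph{isomorphism}, so the triangle is a rotated trivial triangle rather than a ``split'' one with vanishing third map --- and then verify carefully that decomposing an arbitrary distinguished triangle into trivial pieces really does force the underlying additive category to be semisimple abelian. Once $(i)$ is established, parts $(ii)$ and $(iii)$ are essentially formal, resting on Lemma~\ref{Lemma1} and \cite{RV} respectively.
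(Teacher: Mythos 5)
Your proposal is correct and follows essentially the same route as the paper: part $(i)$ via Lemma~\ref{Lemma1} and the observation that every distinguished triangle in $\ugpr\mbox{-}\La$ decomposes into trivial triangles and their rotations (your Krull--Schmidt phrasing and the paper's explicit construction of kernels and cokernels from the matrix form $\bigl(\begin{smallmatrix} f_1 & 0\\ 0 & 0\end{smallmatrix}\bigr)$ of an arbitrary morphism are the same argument), part $(ii)$ exactly as in the proof of Proposition~\ref{CM-finite}, and part $(iii)$ from the relation $\tau_{\CG}=S_{\CG}\circ\Omega_{\La}$. Your extra care in identifying the triangle of the non-split conflation as a rotated contractible triangle (with connecting map an isomorphism) is a welcome clarification of a point the paper passes over quickly, but it does not change the method.
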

\begin{proof}
$(i)$ Since $\La$ is G-semisimple, by Lemma \ref{Lemma1} any short exact sequence with all terms in $\rm{Gprj} \mbox{-} \La$ can be written as a direct sum of the short exact sequences of the form $0 \rt \Omega_{\La}(A) \rt P \rt A \rt 0$, $0 \rt 0 \rt B \st{1} \rt B \rt 0$ and $0 \rt C \st{1} \rt C \rt 0 \rt 0$ for some $A, B $ and $C$ in $\gpr$-$\La.$ Since triangles in $\ugpr$-$\La$, are induced by the short exact sequences in $\gpr$-$\La$, all possible triangles in $\ugpr$-$\La$ are a direct sums of the trivial triangles $K \rt 0 \rt K[1] \st{1} \rt K[1] $, $K \st{1} \rt K \rt 0 \rt K[1]$ and $0 \rt K \st{1} \rt K \rt 0.$
By the axioms of triangulated categories, every morphism $f:X \rt Y$ in $\ugpr$-$\La$ is completed to a triangle, hence, by the above, we can write as $f=\bigl(\begin{smallmatrix}
f_1 & 0 \\
0 & 0
\end{smallmatrix}\bigr):X_1\oplus Y_1 \rt X_2 \oplus Y_2$, where $f_1:X_1 \rt X_2$ is an isomorphism in $\ugpr$-$\La$. Define the natural injection $i: Y_1 \rt X_1 \oplus Y_1$ and the natural projection $X_2 \oplus Y_2 \rt Y_2$ as the kernel and the cokernel of $f$, respectively. Then this gives a semisimple abelian structure over $\ugpr$-$\La$. Now if we consider the equivalence $\Omega_{\La}:\ugpr$-$\La\rt \ugpr$-$\La$, then the resulting triangulated category $(\ugpr$-$\La, \Omega)$ is triangle equivalent to $\ugpr$-$\La$, so we get $(i).$

$(ii)$ Since any almost split sequence does not split, this assertion follows
from Lemma \ref{Lemma1}.

$(iii)$ This comes from $(ii)$ and using this point that $\tau_{\CG}=S_{\CG}\circ \Omega_{\La} .$
\end{proof}

The result below is obtained easily by using the duality $(-)^*:\gpr$-$\La \rt \gpr$-$\La^{\rm{op}}.$ So we skip its proof.

\begin{proposition}\label{Symmetry}
Let $\La$ be a  G-semisimple algebra. Then $\La^{\rm{op}}$ is also a  G-semisimple algebra.
\end{proposition}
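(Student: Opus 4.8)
The plan is to exploit the duality $(-)^*:\gpr\mbox{-}\La \rightarrow \gpr\mbox{-}\La^{\rm op}$ recalled just before Definition 3.1, together with the characterization of G-semisimplicity supplied by Lemma \ref{Lemma1}. Concretely, I would show that $\La^{\rm op}$ satisfies the conclusion of Lemma \ref{Lemma1}, namely that every short exact sequence in $\gpr\mbox{-}\La^{\rm op}$ decomposes as a direct sum of sequences of the three distinguished shapes: $0\rightarrow\Omega_{\La^{\rm op}}(A)\rightarrow P\rightarrow A\rightarrow 0$ with $P$ projective, $0\rightarrow 0\rightarrow B\st{1}\rightarrow B\rightarrow 0$, and $0\rightarrow C\st{1}\rightarrow C\rightarrow 0\rightarrow 0$. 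Since (by the reasoning in Lemma \ref{Lemma1} and its converse in the subsequent lemma) this splitting property is equivalent to $\md\mbox{-}(\ugpr\mbox{-}\La^{\rm op})$ being semisimple — one direction is Lemma \ref{Lemma1}, and for the converse one uses that $\cm$-finiteness of $\La^{\rm op}$ (which follows from Proposition \ref{CM-finite} applied to the G-semisimple algebra $\La$, since $(-)^*$ gives a bijection on indecomposable Gorenstein projectives) lets us realize $\md\mbox{-}(\ugpr\mbox{-}\La^{\rm op})$ as a module category and argue via simple functors as in the proof of the lemma above Proposition \ref{pro11} — this would finish the argument.

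The first step is to observe that the contravariant duality $(-)^*$ restricts to an exact duality between the exact categories $\gpr\mbox{-}\La$ and $\gpr\mbox{-}\La^{\rm op}$: it sends short exact sequences of Gorenstein projectives to short exact sequences of Gorenstein projectives (exactness because the modules involved are reflexive and $\Ext^1_{\La}(G,\La)=0$ for $G$ Gorenstein projective, so applying $\Hom_{\La}(-,\La)$ keeps the sequence exact), it carries projectives to projectives, and it interchanges syzygies and cosyzygies up to the identification $G^{**}\cong G$; in particular $(\Omega_{\La} G)^*\cong\Omega^{-1}_{\La^{\rm op}}(G^*)$ and dually. The second step is the translation of shapes: applying $(-)^*$ to $0\rightarrow\Omega_{\La}(A)\rightarrow P\rightarrow A\rightarrow 0$ produces $0\rightarrow A^*\rightarrow P^*\rightarrow\Omega_{\La}(A)^*\rightarrow 0$, i.e. a sequence of the form $0\rightarrow\Omega^{-1}_{\La^{\rm op}}(A^*)\text{-preimage}\rightarrow P^*\rightarrow\Omega^{-1}_{\La^{\rm op}}(A^*)\rightarrow 0$ — which, after the harmless relabeling $B:=A^*$, is again of the distinguished type "first syzygy into a projective cover," because in the Gorenstein projective setting $\Omega^{-1}$ and $\Omega$ are mutually inverse on the stable category and a minimal left $\prj$-approximation of $A^*$ dualizes a minimal right $\prj$-approximation of $A$. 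The split sequences $0\rightarrow 0\rightarrow B\st{1}\rightarrow B\rightarrow 0$ and $0\rightarrow C\st{1}\rightarrow C\rightarrow 0\rightarrow 0$ are obviously interchanged with each other under $(-)^*$. Hence any short exact sequence in $\gpr\mbox{-}\La^{\rm op}$, being $(-)^*$ applied to one in $\gpr\mbox{-}\La$ (surjectivity of $(-)^*$ on objects and sequences up to iso), inherits the direct-sum decomposition from Lemma \ref{Lemma1}.

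The main point requiring care — and the only place where something could go wrong — is making sure the decomposition furnished by Lemma \ref{Lemma1} is genuinely preserved by $(-)^*$ and that the image sequences are of exactly the three listed shapes rather than some "dualized" variant; in particular one must check that a sequence $0\rightarrow\Omega_{\La}(A)\rightarrow P\rightarrow A\rightarrow 0$ dualizes, after relabeling, to $0\rightarrow\Omega_{\La^{\rm op}}(A^{*})\rightarrow P'\rightarrow A^{*}\rightarrow 0$ rather than merely to some exact sequence with a projective middle term. This follows because for an indecomposable non-projective Gorenstein projective $A$ one has $\Omega^{-1}_{\La}\Omega_{\La}A\cong A$ (noted in the Notation section), so the dual of the canonical sequence for $A$ is the canonical sequence for a module stably isomorphic to $A^{*}$; adjusting by split summands, which only contributes sequences of the trivial types, gives exactly the canonical sequence for $A^{*}$. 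Once this bookkeeping is done, the converse direction of the lemma preceding Proposition \ref{pro11}, applied over $\La^{\rm op}$ (legitimate since $\La^{\rm op}$ is $\cm$-finite), upgrades the splitting property back to the statement that $\md\mbox{-}(\ugpr\mbox{-}\La^{\rm op})$ is semisimple, i.e. $\La^{\rm op}$ is G-semisimple. This is why the authors can reasonably say "the result is obtained easily" and skip the proof.
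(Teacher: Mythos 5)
Your proposal is correct and follows exactly the route the paper indicates: the authors omit the proof, saying only that it "is obtained easily by using the duality $(-)^*:\gpr\mbox{-}\La \rt \gpr\mbox{-}\La^{\rm{op}}$," and your argument is a careful implementation of that hint, transporting the decomposition of Lemma \ref{Lemma1} across the duality and then invoking the characterization of G-semisimplicity via G-semisimple modules plus $\cm$-finiteness. The only blemish is the relabeling slip ($B:=A^*$ should be $B:=\Omega_{\La}(A)^*$, i.e.\ $B:=\Omega^{-1}_{\La^{\rm op}}(A^*)$), which you in effect correct later; the verification that the dual of the canonical sequence is again a canonical sequence (projective cover and all) is the right point to insist on and your argument for it is sound.
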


Assume that $\La$ is a G-semisimple algebra. In view of Proposition \ref{CM-finite}, $\gpr$-$\La$ is of finite representation type. On the other hand, by \cite[Lemma 2.2]{C1}, the syzygy functor preserves the indecomposable Gorenstein projective modules. These facts yield that the non-projective Gorenstein projective modules are $\Omega_{\La}$-periodic, meaning that there exists an integer $n > 0$, depending on a given non-projective Gorenstein projective module $G$, such that $\Omega^n_{\La}(G)\simeq G$.

On indecomposable non-projective modules in $\gpr\mbox{-}\La$, we define an equivalence relation, as follows: for given two indecomposable objects $G$ and $G'$ in $\gpr\mbox{-}\La$, we write $G\sim G'$ if and only if there is some $n \in \mathbb{Z} $ such that $G\simeq \Omega^n_{\La}(G')$.

Let $\mathcal{C}(\Lambda)$ denote the set of all equivalence classes with respect to the relation $\sim$. Furthermore, for an indecomposable non-projective Gorenstein projective module $G$, we use $l(G)$ to represent the minimum number $n>0$ for which $\Omega^n_{\Lambda}(G)\simeq G$. It can be observed that the equivalence class $[G]$ consists of $l(G)$ indecomposable modules up to isomorphism.

Assume that $G$ is an indecomposable non-projective module in $\gpr\mbox{-}\La$. By considering Proposition \ref{Omega-algebra}$(ii)$, we can conclude that the $\tau_{\mathcal{G}}$-orbit of $G$ is equal to the equivalence class $[G]$ in $\mathcal{C}(\Lambda)$.

\begin{remark}\label{remark 2.7}
Let $\La=k\CQ/I$ be a monomial algebra, that is, the quotient algebra of the path algebra $k\CQ$	of a finite quiver $\CQ$ modulo the ideal $I$ generated by the paths of length at least 2. Following \cite[Definition 3.7]{CSZ}, we recall that a non-zero path $p$ in $\La$ is a perfect path, provided that there exists a sequence
$p = p_1, p_2,\cdots ,p_n, p_{n+1}= p$ of non-zero paths such that $(p_i, p_{i+1})$ are perfect pairs (see \cite[Definition 3.3]{CSZ}) for all $1\leqslant i \leqslant n$. If the given
non-zero paths $p_i$ are pairwise distinct, we refer to the sequence $p = p_1, p_2,...,p_n$ as a relation cycle for $p$, which has length $n$. It is proved in \cite[Theorem 4.1]{CSZ} that there is a bijection between the set of perfect paths
in $\La$ and the set of isoclasses of indecomposable non-projective Gorenstein projective $\La$-modules, which is a generalization of the classification over gentle algebras given in \cite{Ka}. The bijection is given by sending a perfect path $p$ to the $\La$-module $p \La$, where the right ideal $p \La$ is generated by all paths $q \notin I$ such that $q=pq'$ for some path $q'$. The situation is better when we concentrate on the quadratic monomial algebras, that is, the ideal $I$ is generated by paths of length 2.

In the rest of this remark, assume that $\La$ is a quadratic monomial algebra. By \cite[lemma 3.4]{CSZ}
for a perfect pair $(p, q)$ in $\La$, both $p$ and $q$ are necessarily arrows. In particular, a perfect path is an arrow and its relation cycle consists entirely of arrows. Then by using \cite[Theorem 4.1]{CSZ}, one may deduce that there is a bijection between perfect arrows in $\La$ and the set of isoclasses indecomposable non-projective Gorenstein projective $\La$-modules.

In \cite[Definition 5.1]{CSZ}, the relation quiver $\mathcal{R}_\Lambda$ of $\Lambda$ is defined. Its vertices are given by the arrows in $Q$, and there is an arrow $[\beta\alpha]:\alpha \rightarrow \beta$ if the head of $\beta$ is equal to the tail of $\alpha$ and $\beta\alpha$ lies in $I$. A component $\mathcal{D}$ of $\mathcal{C}_\Lambda$ is called perfect if it forms a basic cycle \cite[Definition 5.1]{CSZ}. From \cite[Lemma 5.3]{CSZ}, we observe that an arrow $\alpha$ is perfect if and only if the corresponding vertex in $\mathcal{C}_\Lambda$ belongs to a perfect component. As mentioned in the proof of \cite[Theorem 5.7]{CSZ}, for a perfect arrow $\alpha$ lying in the perfect component $\mathcal{D}$, its relation cycle is of the form $\alpha = \alpha_1, \alpha_2, \cdots, \alpha_{d_i}$ with $\alpha_{d_i+1} = \alpha$. Moreover, we have the equalities $\Omega_\Lambda(\alpha_i\Lambda) = \alpha_{i+1}\Lambda$ and $\Omega^{-1}_\Lambda(\alpha_i\Lambda) = \alpha_{i-1}\Lambda$. According to the above observation, we see that the set formed by all perfect components of the relation quiver $\mathcal{R}_\Lambda$ are in one-to-one correspondence with $\mathcal{C}(\Lambda)$, by sending a given perfect component $\mathcal{D}$ to the equivalence class $[\alpha\Lambda]$ in $\mathcal{C}(\Lambda)$, where $\alpha$ is a vertex of $\mathcal{D}$.

\end{remark}
\subsection{Singularity category of G-semisimple algebras}
Let $\CT$ be an additive category and $F:\CT \rt \CT$ an automorphism. Following \cite{K}, the orbit category $\CT/F$ has the same objects as $\CT$ and its morphisms from $X$ to $Y$ are $\oplus_{n \in \mathbb{Z}}\Hom_{\CT}(X, F^n(Y) ).$ Further, the singularity category $\mathbb{D}_{\text{sg}}(\La)$ of an algebra $\La$ is defined as the Verdier quotient of the bounded derived category $\mathbb{D}^{\text{b}}(\md$-$\La)$ by the subcategory of bounded complexes of projective modules.

\begin{proposition}\label{proposition 2.8}
Let $\La$ be a  G-semisimple algebra over an algebraic closed field $k$. If $\La$ is a Gorenstein algebra, then there is an equivalence of triangulated categories $$ \mathbb{D}_{\text{sg}}(\La)\simeq \prod_{[G]\in \mathcal{C}(\La)} \frac{\mathbb{D}^{\rm{b}}(\md\mbox{-}k)}{l(G)}, $$ where $\frac{\mathbb{D}^{\rm{b}}(\md\mbox{-} k)}{l(G)}$ denotes the triangulated orbit category of $\mathbb{D}^{\rm{b}}(\md\mbox{-}k)$ with respect to the functor $F=[l(G)]$, the $l(G)$-th power of the shift functor.
\end{proposition}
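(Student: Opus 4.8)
The plan is to identify the singularity category with the stable category of Gorenstein projectives and then decompose it along the equivalence relation $\sim$. First I would recall that for a Gorenstein algebra $\La$, Buchweitz's theorem gives a triangle equivalence $\mathbb{D}_{\mathrm{sg}}(\La)\simeq\ugpr\mbox{-}\La$, so it suffices to analyze $\ugpr\mbox{-}\La$. By Proposition \ref{Omega-algebra}$(i)$, $\ugpr\mbox{-}\La$ is a semisimple triangulated category, and by Proposition \ref{CM-finite} it has only finitely many indecomposables; by Proposition \ref{Omega-algebra}$(ii)$ the shift functor on $\ugpr\mbox{-}\La$ is (up to the sign/inverse bookkeeping of the triangulated structure) the syzygy $\Omega_{\La}$, whose orbits on indecomposables are precisely the classes in $\mathcal{C}(\La)$, each class $[G]$ having $l(G)$ elements.

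Next I would set up the product decomposition. Since every triangle in $\ugpr\mbox{-}\La$ splits and every indecomposable lies in exactly one $\Omega_\La$-orbit, the category $\ugpr\mbox{-}\La$ decomposes as a product $\prod_{[G]\in\mathcal{C}(\La)}\CT_{[G]}$, where $\CT_{[G]}$ is the full (triangulated) subcategory generated by the orbit of $G$ — here I should check that there are no nonzero morphisms between indecomposables in distinct orbits, which over an algebraically closed field $k$ follows because each $\uEnd_\La(G_i)$ is a finite-dimensional division algebra hence equal to $k$, and the semisimplicity forces $\Hom$ between non-isomorphic indecomposables to vanish (one can see this from the structure of morphisms described in the proof of Proposition \ref{Omega-algebra}$(i)$, namely that every morphism is block-diagonal with isomorphism blocks). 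So the task reduces to identifying each $\CT_{[G]}$ with $\mathbb{D}^{\mathrm b}(\md\mbox{-}k)/[l(G)]$.

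For a single orbit, I would argue as follows. Fix $G$ with $l(G)=l$; then $\CT_{[G]}$ is a $k$-linear semisimple triangulated category with exactly $l$ indecomposables $G,\Omega_\La G,\dots,\Omega_\La^{l-1}G$, all with endomorphism ring $k$, no $\Hom$'s between distinct ones, and shift functor cyclically permuting them with $\Omega_\La^{l}G\simeq G$. The triangulated orbit category $\mathbb{D}^{\mathrm b}(\md\mbox{-}k)/[l]$ has the same description: $\mathbb{D}^{\mathrm b}(\md\mbox{-}k)\simeq(\md\mbox{-}k,[1])$ is itself semisimple with indecomposables $k[n]$, $n\in\Z$, all triangles split, and passing to the orbit under $[l]$ collapses this to exactly $l$ indecomposables $k,k[1],\dots,k[l-1]$ cyclically permuted by $[1]$, with $\Hom_{\mathbb{D}^{\mathrm b}(\md\mbox{-}k)/[l]}(k[i],k[j])=\bigoplus_{n\in\Z}\Hom_{\mathbb{D}^{\mathrm b}(\md\mbox{-}k)}(k[i],k[j+nl])$, which is $k$ when $i\equiv j\pmod l$ and $0$ otherwise. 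I would then construct the equivalence $\CT_{[G]}\to\mathbb{D}^{\mathrm b}(\md\mbox{-}k)/[l]$ sending $\Omega_\La^{i}G\mapsto k[i]$, checking it is additive, fully faithful on the nose by the morphism computations above, dense, and compatible with shifts; since in a semisimple triangulated category the triangulated structure is determined by the shift (by \cite[Lemma 3.4]{C}), compatibility with shifts upgrades this to a triangle equivalence. Assembling over all $[G]\in\mathcal{C}(\La)$ yields the claimed product decomposition.

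The main obstacle I anticipate is the bookkeeping around the shift functor of $\ugpr\mbox{-}\La$ versus $\Omega_\La$: the genuine suspension in $\ugpr\mbox{-}\La$ is the cosyzygy $\Omega_\La^{-1}$, and Proposition \ref{Omega-algebra}$(ii)$ only tells us $\tau_{\CG}=\Omega_\La$ with $S_{\CG}\simeq\mathrm{id}$ on objects; I need to be careful that replacing $[1]$ by $[-1]=\Omega_\La$ (which is legitimate precisely because in a semisimple triangulated category $(\CA,\Sigma)$ one may equally use $(\CA,\Sigma^{-1})$, and because the orbit category under $[l]$ is insensitive to this inversion) does not disturb the identification, and that $l(G)$ — defined via $\Omega_\La^{n}G\simeq G$ — is the same as the period under the suspension. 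A secondary, more routine point is verifying carefully that $\Hom$-groups between indecomposables in different $\sim$-classes vanish; this is where the algebraically closed hypothesis is genuinely used, and it should be extracted cleanly from the explicit block-diagonal form of morphisms obtained in the proof of Proposition \ref{Omega-algebra}$(i)$.
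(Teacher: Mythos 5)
Your proposal is correct and follows essentially the same route as the paper: Buchweitz's equivalence $\mathbb{D}_{\rm sg}(\La)\simeq\ugpr\mbox{-}\La$, the Hom-computations extracted from the proof of Proposition \ref{Omega-algebra} together with algebraic closedness to get $\uEnd_{\La}(\underline{X})\simeq k$, and the orbit-by-orbit identification along $\mathcal{C}(\La)$. The only difference is that you carry out the final identification of each orbit component with $\mathbb{D}^{\rm b}(\md\mbox{-}k)/[l(G)]$ explicitly (including the $\Omega_{\La}$ versus $\Omega_{\La}^{-1}$ bookkeeping), whereas the paper delegates that step to the argument of \cite[Theorem 2.5(b)]{Ka}.
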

\begin{proof}
Since $\La$ is a Gorenstein algebra, by a fundamental result of Buchweitz and Happel, there is a triangle equivalence $\mathbb{D}_{sg}(\La)\simeq \ugpr$-$\La$. So it is enough to describe $\ugpr$-$\La$. Due to the lines before Remark \ref{remark 2.7}, any indecomposable non-projective object in $\gpr$-$\La$ is isomorphic to a module in an equivalence class in $\CC(\La)$.

For any equivalence class $[G] \in \mathcal{C}(\La)$, we have $\Omega^{-l(G)}(G')\simeq G'$ in $\ugpr$-$\La$, where $G' \in [G].$ One should note that the suspension functor $\Sigma$ on $\mathrm{mod}$-$\Lambda$ is the quasi-inverse of the syzygy functor $\Omega_{\Lambda}^{-1}$. On the other hand, by the proof of Proposition \ref{Omega-algebra}, one may deduce that for every indecomposable modules $X$ and $Y$ in $\ugpr$-$\La$, $\underline{\Hom}_{\La}(X, Y)=0$ if $X\ncong Y$, and each non-zero morphism $\underline{f}$ in $\underline{\Hom}_{\La}(X, Y)$ is an isomorphism if $\underline{X} \cong \underline{Y}.$ Hence, $\uEnd_{\La}(\underline{X})$ is a division algebra containing $k.$ As $k$ is algebraic closed, this implies that $\uEnd_{\La}(X)\simeq k$ as $k$-vector space (or more as algebras). Now the rest of the proof goes through as for \cite[Theorem 2.5(b)]{Ka}. So the proof is finished.
\end{proof}

The above result is inspired by the similar equivalence in \cite{Ka} for gentle algebras and for quadratic monomial algebras in \cite{CSZ}.

\begin{remark}\label{Remark 2.8}
\begin{itemize}
\item [$(1)$] The algebras of finite global
dimension are trivially G-semisimple algebras, since $\gpr$-$\La= \mathsf{proj}$-$\La$. Although, our results in this paper are not interesting to them.

\item[$(2)$] Assume that $\La = kQ/I$ is a quadratic monomial algebra. According to \cite[Theorem 5.7]{CSZ}, there is a triangle equivalence
$\ugpr$-$\La\simeq \CT_{d_1} \times \CT_{d_2} \times \cdots \times \CT_{d_m},$
where $\CT_{d_n}=(\md$-$k^n, \sigma_n)$ for some natural number $n,$ and auto-equivalence $\sigma_n: \md$-$ k^n \rt \md$-$ k^n.$ Now, since for each $n$, $\md$-$(\md$-$k^n)$ is semisimple, the above equivalence yields that a quadratic monomial algebra is a G-semisimple algebra. Especially, gentle algebras are G-semisimple algebras.
\item[$(3)$] Let $\La$ be a simple gluing algebra of $A$ and $B$, see the papers \cite{Lu1} and \cite{Lu2} for two different ways of defining gluing algebras. In these two papers, for both types of gluing, it is proved that $\ugpr$-$\La\simeq\ugpr$-$A \coprod \ugpr$-$B$. So, if $\md$-$ (\ugpr$-$A)$ and $\md$-$ (\ugpr$-$B)$ are both semisimple, then the same will be true for $\md$-$ (\ugpr$-$\La)$. This means that  simple gluing operation preserves the G-semisimplicity of algebras. For example, cluster-tilted algebras of type $\mathbb{A}_n$ and endomorphism algebras of maximal rigid objects of cluster tube $\mathcal{C}_n$ can be built as simple gluing of G-semisimple algebras, see \cite[ Corollaries 3.21 and 3.22]{Lu2}. Hence this kind of cluster-tilted algebras are G-semisimple.
\item [$(4)$] Assume that $\La$ and $\La'$ are derived equivalent algebras. According to Theorem 3.24 of \cite{AHV},
$\ugpr$-$\La\simeq \ugpr$-$\La'.$ This implies that$\Lambda$ and $\Lambda'$ are G-semisimple, simultaneously.
Hence G-semisimplicity of algebras are closed under derived equivalences.
\item [$(5)$] Assume that $\Lambda$ is an Artin algebra. According to \cite[Theorem 10.7]{AR1}, $\mathrm{mod}$-$(\underline{\mathrm{mod}}$-$\Lambda)$ is a semisimple abelian category if and only if $\Lambda$ is a Nakayama algebra with loewy length at most $2$. This implies that the self-injective Nakayama algebras $k[x]/(x^n)$, with $n>2$, are examples of $\cm$-finite algebras that are not G-semisimple algebras. Therefore, there exist $\cm$-finite algebras that are not G-semisimple.
\end{itemize}
\end{remark}
\subsection{1-Gorenstein G-semisimple algebras}
Recall that an algebra $\La$ is $1$-Gorenstein if the injective dimension of $\La$ as a left or right module (in this case one side is enough) is at most one.
In the following, we shall give a characterization of $1$-Gorenstein G-semisimple algebras in terms of the radical of their projective modules.
\begin{proposition}\label{RadGorp}
The following conditions are equivalent:
\begin{itemize}
\item[$(i)$] $\La$ is a $1$-Gorenstein G-semisimple algebra.
\item[$(ii)$]$\mathsf{rad} \La\oplus \La$ generates $\gpr$-$\La$, i.e., $\gpr$-$ \La=\mathsf{add}(\mathsf{rad} \La\oplus \La)$ .

\end{itemize}
\end{proposition}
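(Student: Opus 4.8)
The proof is an equivalence, so I would argue in two directions, using the structural results on G-semisimple algebras already at hand. For $(i)\Rightarrow(ii)$, assume $\La$ is $1$-Gorenstein and G-semisimple. The $1$-Gorenstein hypothesis tells us that $\Omega_\La(M)$ is Gorenstein projective for \emph{every} module $M$; in particular, applying this to the simple modules $S$, the syzygy $\Omega_\La(S)$ is a submodule of a projective cover $P$, i.e.\ $\Omega_\La(S)\subseteq \mathsf{rad}P$, and in fact one should get that $\mathsf{rad}P$ itself is Gorenstein projective when $\La$ is $1$-Gorenstein (the cokernel $P/\mathsf{rad}P$ is semisimple, and over a $1$-Gorenstein algebra syzygies of arbitrary modules are Gorenstein projective, hence $\mathsf{rad}P=\Omega_\La(P/\mathsf{rad}P)\in\gpr\mbox{-}\La$). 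So $\mathsf{rad}\La\oplus\La\in\gpr\mbox{-}\La$, and $\mathsf{add}(\mathsf{rad}\La\oplus\La)\subseteq\gpr\mbox{-}\La$ is automatic. For the reverse inclusion I would take an arbitrary indecomposable non-projective $G\in\gpr\mbox{-}\La$ and use Lemma~\ref{Radical}$(i)$: the minimal right $\gpr\mbox{-}\La$-approximation of $\mathsf{rad}P$ composed with the inclusion is a minimal right almost split morphism in $\gpr\mbox{-}\La$ ending at an indecomposable projective $P$. But when $\mathsf{rad}P$ is already Gorenstein projective, the approximation is the identity, so the minimal right almost split morphism ending at $P$ is just $\mathsf{rad}P\hookrightarrow P$. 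Dually, by Lemma~\ref{Radical}$(ii)$ and Proposition~\ref{Symmetry}, every indecomposable projective sits as the source of a minimal left almost split morphism into (a summand of) $\mathsf{rad}P^{*\ast}$-type modules. The argument in the proof of Proposition~\ref{CM-finite} then shows every indecomposable non-projective Gorenstein projective $G$ is a summand of the target $Y$ of a minimal left almost split morphism $Q\to Y$ out of an indecomposable projective $Q$; combined with $1$-Gorensteinness this forces $G$ to be a summand of $\mathsf{rad}$ of a projective, giving $\gpr\mbox{-}\La\subseteq\mathsf{add}(\mathsf{rad}\La\oplus\La)$.

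**The reverse direction.** For $(ii)\Rightarrow(i)$, suppose $\gpr\mbox{-}\La=\mathsf{add}(\mathsf{rad}\La\oplus\La)$. First I would check $1$-Gorensteinness: the condition implies $\mathsf{rad}P$ is Gorenstein projective for every indecomposable projective $P$, hence $\Omega_\La(S)$ is Gorenstein projective for every simple $S$; since every module has a filtration with simple subquotients and $\gpr\mbox{-}\La$ is resolving (closed under extensions and kernels of epimorphisms), it follows that $\Omega_\La(M)\in\gpr\mbox{-}\La$ for all $M$, which is exactly the statement that $\La$ is $1$-Gorenstein. Then I must show $\La$ is G-semisimple, i.e.\ $\md\mbox{-}(\ugpr\mbox{-}\La)$ is semisimple. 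The finiteness part ($\cm$-finiteness) is immediate because $\mathsf{add}(\mathsf{rad}\La\oplus\La)$ has only finitely many indecomposables. For semisimplicity I would show every almost split sequence in $\gpr\mbox{-}\La$ ending at a non-projective indecomposable $G$ has projective middle term — equivalently, that $\Omega_\La(G)\to P_G\to G$ is already the almost split sequence. Using Lemma~\ref{Radical}$(i)$ with $\mathsf{rad}P$ Gorenstein projective, the map $\mathsf{rad}P\hookrightarrow P$ is minimal right almost split; writing $\mathsf{rad}P=\bigoplus G_i$ into indecomposables (each a summand of some $\mathsf{rad}$ of a projective, hence of the same form), the irreducible maps into the indecomposable projective $P$ are exactly the $G_i\hookrightarrow P$. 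Dually every indecomposable projective is the source of a minimal left almost split map landing in a $\mathsf{rad}$-type module. I would then assemble these to conclude that for each indecomposable non-projective $G$, the middle term of the almost split sequence ending at $G$ is projective, which by the Lemma following Definition of G-semisimple module (or directly by Lemma~\ref{Lemma1}'s converse argument in the $\cm$-finite case) gives $\md\mbox{-}(\ugpr\mbox{-}\La)$ semisimple.

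**The main obstacle.** The delicate point is the inclusion $\gpr\mbox{-}\La\subseteq\mathsf{add}(\mathsf{rad}\La\oplus\La)$ in $(i)\Rightarrow(ii)$, and symmetrically pinning down exactly which indecomposables arise, in $(ii)\Rightarrow(i)$. The almost-split-sequence machinery (Lemma~\ref{Radical}, the argument in Proposition~\ref{CM-finite}) tells us every indecomposable non-projective Gorenstein projective $G$ is \emph{reachable} from an indecomposable projective via one irreducible map, but to identify $G$ with a summand of $\mathsf{rad}$ of a projective I need the middle term of the relevant almost split sequence to be projective — and that is precisely where $1$-Gorensteinness (forcing $\mathsf{rad}P\in\gpr\mbox{-}\La$, so that the approximation in Lemma~\ref{Radical}$(i)$ is trivial) has to be used carefully, matching up minimal left almost split maps out of projectives with minimal right almost split maps into projectives. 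I expect the bookkeeping that the target of the left-almost-split map out of $Q$ is a direct sum of modules of the form $\mathsf{rad}P_j$ (via the $(-)^*$ duality of Lemma~\ref{Radical}$(ii)$ and Proposition~\ref{Symmetry}) to be the part requiring the most care, though no genuinely hard new idea beyond what is already developed.
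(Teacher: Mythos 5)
Your overall strategy matches the paper's: both directions pivot on the fact that $1$-Gorensteinness makes $\mathsf{rad}P$ Gorenstein projective, so that by Lemma~\ref{Radical}$(i)$ the inclusion $\mathsf{rad}P\hookrightarrow P$ is the minimal right almost split map ending at $P$, and the semisimplicity in $(ii)\Rightarrow(i)$ is read off from the Yoneda sequence of an almost split sequence with projective middle term. There are, however, two places where the step you defer is the actual content of the proof. In $(i)\Rightarrow(ii)$ you exhibit $G$ as a summand of the \emph{target} of a minimal left almost split map $Q\rt Y$ out of an indecomposable projective $Q$, i.e.\ you produce an irreducible map $Q\rt G$. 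That points the wrong way: to conclude that $G$ is a direct summand of $\mathsf{rad}Q$ you need an irreducible map $G\rt Q$ \emph{into} an indecomposable projective, so that $G$ becomes a summand of the source $\mathsf{rad}Q$ of the minimal right almost split map ending at $Q$. The identification of $Y=(\mathsf{rad}(Q^*))^*$ with a sum of radicals of projectives, which you yourself flag as the delicate bookkeeping, is neither established nor needed in the paper's route: there one writes $G=\Omega_{\La}(G')$ for an indecomposable non-projective $G'$, uses G-semisimplicity to see that the almost split sequence ending at $G'$ is $0\rt G\rt P_{G'}\rt G'\rt 0$ with projective middle term, and obtains the irreducible maps $G\rt Q$ directly from the left almost split map $G\rt P_{G'}$. (Your version is repairable -- an irreducible $Q\rt G$ yields an irreducible $\tau_{\CG}G\rt Q$, and one then applies the argument to $\Omega^{-1}_{\La}(G)$ in place of $G$ -- but as written the step is not there.)

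The more serious gap is in $(ii)\Rightarrow(i)$, where ``I would then assemble these to conclude that \dots\ the middle term \dots\ is projective'' skips the one genuinely non-trivial point. Knowing that the minimal right almost split map into each indecomposable projective is $\mathsf{rad}P\hookrightarrow P$, and that every non-projective indecomposable is a summand of some $\mathsf{rad}P$, does not by itself exclude a non-projective indecomposable summand of the middle term of the almost split sequence ending at a non-projective $G'$. The paper excludes this by proving there are no irreducible maps between two indecomposable non-projective Gorenstein projective modules: such a map $f\colon G\rt G'$ is mono or epi (as $\gpr\mbox{-}\La$ is closed under submodules here); in the epi case, since $\tau_{\CG}G'$ is a summand of some $\mathsf{rad}P$ and $\mathsf{rad}P\hookrightarrow P$ is right minimal almost split, the projective $P$ is forced into the middle term of the almost split sequence ending at $G'$, giving an irreducible map $P\rt G'$, which one lifts through the epimorphism $f$ using projectivity of $P$ and contradicts irreducibility. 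This lifting argument is the missing idea; without it, or an equivalent mechanism, your assembly does not close.
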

\begin{proof}
$(i)\Rightarrow (ii)$. Since $\La$ is
$1$-Gorenstein,	$\Omega^1_{\La}(\md$-$\La)=\gpr$-$\La$. Let $Q$ be an arbitrary indecomposable projective $\La$-module. By Lemma \ref{Radical}, $\mathsf{rad} Q\hookrightarrow Q$ is a minimal right almost split morphism in $\gpr$-$\La$. Now suppose that $G$ is an indecomposable non-projective Gorenstein projective module. There exists an indecomposable non-projective Gorenstein projective module $G'$ such that $\Omega_{\La}(G')=G$. Since $\La$ is G-semisimple, we can have the almost split sequence $0 \rt G \st{f} \rt P \rt G' \rt 0$ in $\gpr$-$\La$. Now let $Q$ be an indecomposable direct summand of $P$. Since $f$ is a left minimal almost split morphism, there exists an irreducible morphism from $G$ to $Q$. As mentioned earlier, there also exists the right minimal almost split morphism $\mathsf{rad}Q\hookrightarrow Q$. Therefore, $G$ must be a direct summand of $\mathsf{rad}Q$, and hence a direct summand of $\mathsf{rad}\Lambda$, as required.

$(ii)\Rightarrow (i).$ Since $\mathsf{rad}\La$ is a Gorenstein projective module, the simple modules have Gorenstein projective dimension at most one. Then by induction on the length of modules, one may see that the global Gorenstein projective dimension of $\md$-$\La$ is at most one, equivalently, $\La$ is $1$-Gorenstein. In order to prove that $\La$ is a G-semisimple algebra, we first claim that there are no irreducible morphisms between two indecomposable non-projective Gorenstein projective modules. Assume on the contrary that there is an irreducible morphism $f: G \rt G'$ where $G$ and $G'$ are indecomposable non-projective Gorenstein projective modules. As $\gpr$-$\La$ is closed under submodule, the same as the module category case, we deduce that $f$ would be either a monomorphism or an epimorphism. First assume that $f$ is an epimorphism and $0\rt \tau_{\CG}G'\rt E\st{g}\rt G'\rt 0$ is an almost split sequence in $\gpr$-$\La$. Clearly, $\tau_{\CG}G'$ is an indecomposable non-projective Gorenstein projective module. So by our assumption, there is an indecomposable projective module $P$ such that
$\tau_{\CG}G'$ is a direct summand of $\mathsf{rad} P.$ Since by Lemma \ref{Radical}, the inclusion $\rad P\hookrightarrow P$ is a right minimal	almost split morphism in $\gpr\mbox{-}\La$, $P$ must be a direct summand of $E$. Thus, we have the irreducible morphism $g \mid : P \rightarrow G'$. Since $P$ is projective and $f$ is an epimorphism, there exists a morphism $h : P \rightarrow G$ such that $f \circ h = g \mid$. As $g\mid$ is an irreducible morphism, $h$ will be a split monomorphism or it is a split epimorphism. But both cases are impossible, as modules $G, G'$ are indecomposable non-projective.
The established claim gives us that for any indecomposable non-projective Gorenstein projective module $X$, if $g: Y \rt X $ is a right minimal almost split morphism in $\gpr$-$\La$, then $Y$ must be a projective module. Note that by $(ii)$ the subcategory $\gpr$-$\La$ is of finite representation type, and so it has almost split sequences. Suppose that an indecomposable non-projective representable functor $(-, \underline{G})$ is given. We may assume that $G$ is an indecomposable non-projective object of $\gpr$-$\La$. Take an almost split sequence $\la: 0 \rt \tau_{\CG}G\rt E\rt G\rt 0$ in $\gpr$-$\La.$ As we have observed, $E$ is projective. Thus, applying the Yoneda functor on $\la$ gives us the following sequence in $\md\mbox{-}(\gpr$-$\La)$
$$ 0 \rt (-, \tau_{\CG}G)\rt (-, E)\rt (-, G)\rt (-, \underline{G})\rt 0.$$Since $\la$ is almost split in $\gpr$-$\La$, the latter sequence implies that $(-, \underline{G})$ is simple in $\md$-$(\gpr$-$\La).$ Consequently, as any simple functor in $\md$-$(\ugpr$-$\La)$ arising in this way, induced by an almost split sequence, see e.g. \cite[Chapter 2]{Au3}, we obtain that $\md$-$(\ugpr$-$\La)$ is a semisimple abelian category. Hence $\La$ is a G-semisimple algebra, and so, the proof is finished.
\end{proof}
We close this section by presenting some examples of $1$-Gorenstein G-semisimple algebras.
\begin{example}\label{123}
\begin{itemize}
\item [$(i)$] Clearly, hereditary algebras are $1$-Gorenstein G-semisimple algebras. So, $1$-Gorenstein G-semisimple algebras can be considered as a generalization of hereditary algebras.
\item[$(ii)$] Recently, Ming Lu and Bin Zhu provided a criterion in \cite{LZ} for determining which monomial algebras are $1$-Gorenstein algebras. With this criterion available, one can search among quadratic monomial algebras to find $1$-Gorenstein G-semisimple algebras. In particular, one can focus on gentle algebras to determine which of them are $1$-Gorenstein, as studied in \cite{CL}.
\item[$(iii)$] The cluster-tilted algebras, defined in \cite{BMR3} and \cite{BMD4}, are an important class of $1$-Gorenstein algebras. So, as provided below one example, among cluster-tilted algebras, we can find some examples of $1$-Gorenstein G-semisimple algebras. For example, the cluster-tilted algebras of type $A$ are gentle algebras. Therefore, in this case, we are dealing with $1$-Gorenstein G-semisimple algebras.
For the other types $D$ and $E$, in \cite{CGL} the singularity categories of cluster-tilted algebras are described by the stable categories of some self-injective algebras. In particular, those cluster-tilted algebras of type $D$ and $E$ which are singularity equivalent to the self-injective Nakayama algebra $\La(3, 2)$, the Nakayama algebra with cycle quiver with $3$ vertices modulo the ideal generated by the paths of length $2$, are other examples of G-semisimple algebras. The following cluster-tilted algebra given by the quiver
\begin{equation*}
\xymatrix{& 2 \ar[ld]_{\beta} \\
1\ar@<2pt>[rr]^{\lambda}\ar@<-2pt>[rr]_{\mu}& & 4 \ar[lu]_{\alpha}\ar[ld]^{\gamma}\\
&3\ar[lu]^{\delta}}
\end{equation*}bound by the quadratic monomial relations $\alpha \beta=0, \ \gamma \delta=0, \ \delta \lambda=0, \ \lambda \gamma=0, \ \beta \mu=0,$ and $\mu \alpha=0 $ is also a $1$-Gorenstein G-semisimple algebra.
\end{itemize}
\end{example}

\section{the path algebras of  G-semisimple algebras}
Assume that $\CQ$ is an acyclic quiver. This section is devoted to show that  for any  G-semisimple finite dimensional algebra  $\La$ over an algebraic closed field $k$,  there exists  an epivalence  from the stable category of Gorenstein projective representations of $\CQ$ by $\La$-modules, $\ugpr(\CQ, \La)$, to the category of representations of $\CQ$ by the stable category of Gorenstein projective $\La$-modules, ${\rm rep}(\CQ, \ugpr$-$\Lambda)$. As a result, we obtain the Gabriel-style classification of Gorenstein projective representations over G-semisimple algebras. Let us inaugurate this section by adapting some necessary definitions.

Let $\CQ=(V, E)$ be a quiver, i.e., a directed graph with the set of vertices $V$ and the set of arrows $ E$ endowed with a couple of functions
$s$ and $t$ which assign, respectively, to any arrow $a$ of its origin and terminal vertices. Let $\CC$ be an additive category. A representation $\CX$ of $\CQ$ by $\CC$ is obtained by associating to any vertex $v $ an object $\CX_v$ in $\CC$ and to any arrow $a:v\rt w$ a morphism $\CX_a:\CX_v\rt \CX_w$ in $\CC.$ Such a representation is denoted as $\CX=(\CX_v, \CX_a)_{v \in V, a \in E }$, or simply $\CX=(\CX_v, \CX_a)$. If $\CX$ and $\CY$ are two representations of $\CQ$, then a morphism $f:\CX \rt \CY$ is determined by a
family $f=(f_v)_{v \in V}$ of $\CC$ in such a way that for any arrow $a:v\rt w$,
the commutativity condition $f_w\CX_a=\CY_af_v $ holds. The representations of $\CQ$ by $\CC$ and the morphisms between them form a category that
is denoted by ${\rm rep}(\CQ, \CC)$. In the case $\CC=\md$-$\La$, then ${\rm rep}(\CQ, \md$-$\La)$ is simply denoted by ${\rm rep}(\CQ, \La)$.

For a subquiver $\CQ' \subseteq \CQ$, the restriction functor
$$e^{\CQ'}:=e^{\CQ'}_{\CC}:{\rm rep}(\CQ, \CC) \rt {\rm rep}(\CQ', \CC),$$
is defined to restrict any representation of $\CQ$ to the vertices of $\CQ'.$ In particular, for any vertex $v \in V$ of quiver, let $\CQ'=\{v\}$ be the subquiver consisting only of $v$.
Let $e^v:{\rm rep}(\CQ, \La) \rt \md$-$\La$ be the evaluation
functor defined by $e^v (\CX ) = \CX_v$, for any representation $\CX$. It is proved in \cite{EH} that $e^v$ has a left adjoint $e_{\la}:\md$-$\La \rt {\rm rep}(\CQ, \La)$ given by $e^v_{\la}(M)_w=\oplus_{\CQ(v, w)} M$, where
$\CQ(v, w)$ denotes the set of paths starting in $v$ and terminating in $w$, and by the natural injection $e^v_{\la}(a):\oplus_{\CQ(v, w_1)} M \rt \oplus_{\CQ(v, w_2)}M$ for an arrow $a:w_1\rt w_2$. Moreover, it is shown that $e^v$ admits also a right adjoint $e^v_{\rho}$, defined by $e^v_{\rho}(M)_w=\prod_{\CQ(w, v)} M $.

If $p = a_m \cdots a_1$ is a path with $s(p)=v$ and $t(a_m)=w$ and $\CX$ a representation in ${\rm rep}(\CQ, \La)$, then the morphism $\CX_p=\CX(a_m)\cdots \CX(a_1):\CX_v\rt \CX_w$ is defined by the composition of maps on arrows $a_p$ in the path $p$.

Let $v$ be a vertex in $\CQ$ and consider the adjoint pair $(e^v_{\la}, e^v)$. For every $M \in \md$-$\La$ and $\CX$ in ${\rm rep}(\CQ, \La)$, the natural isomorphism is given by:
\begin{equation}\label{adjuction-iso} \Theta_{M, \CX}: \Hom_{\La}(M, \CX_v) \rt \Hom_{\CQ}(e^v_{\la}(M), \CX) \end{equation}
$$f:M \rt \CX_v \mapsto \tilde{f}=(\tilde{f}_v)_{v \in V}, $$
$\tilde{f}_w:\oplus_{\CQ(v, w)} M \rt \CX_w$ is defined by sending $(a_{p})_{p \in \CQ(v, w)} \in \oplus_{ \CQ(v, w)} M$ to $\Sigma_{p \in \CQ(v, w)}\CX_p\circ f$.

It is known that a representation $\CX$ of acyclic quiver $\CQ$ is projective if and only if for any vertex $v$:
\begin{itemize}
\item [$(i)$]$\CX_v$ is a projective $\La$-module. \item [$(ii)$]
The $\La$-morphism $\CX^v: \oplus_{t(a)=v}\CX_{s(a)} \rt \CX_v$ is a splitting monomorphism.
\end{itemize}
Here $\CX^v$ denote the $\La$-homorphism $\oplus_{t(a)}\CX_{s(a)} \rt \CX_v$ which defined by sending $(x_{s(a)})_{t(a)=v}$ to $\Sigma_{t(a)=v}\CX_a(x_{s(a)})$, see \cite[Theorem 3.1]{EE}. A quiver $\CQ$ is called {\em acyclic} if it does not contain any directed cycles.\\ Assume that $\gpr(\CQ, \La)$ is the subcategory of ${\rm rep}(\CQ, \La)$ consisting of all Gorenstein projective representations. The following result gives a local characterization of Gorenstein projective representations of an acyclic quiver.

\begin{theorem}(\cite[ Theorem 3.5.1 ]{EHS} or \cite[Theorem 5.1]{LuZ}) \label{Gorenproj charecte}
Let $\CQ$ be an acyclic quiver and $\CX$ a representation in $\rm{rep}(\CQ, \La)$. Then $\CX$ is in $\gpr(\CQ, \La)$ if and only if
\begin{itemize}
\item [$(i)$] For each vertex $v,$ $\CX_v$ is a Gorenstein projective $\La$-module;
\item[$(ii)$] For each vertex $v,$ the $\La$-morphism $\CX^v: \oplus_{t(a)=v} \CX_{s(a)} \rt \CX_v$ is a monomorphism whose cokernel is Gorenstein projective.
\end{itemize}
\end{theorem}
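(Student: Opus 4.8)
The plan is to prove both implications using the adjoint functors $e^v_\lambda$ and $e^v_\rho$ together with the fact that $\gpr(\CQ,\La)$ is a resolving subcategory of $\rm{rep}(\CQ,\La)$ closed under direct summands, and that it admits complete resolutions built from projective representations, whose explicit local shape is recalled in the excerpt. First I would recall that the projective objects of $\rm{rep}(\CQ,\La)$ are exactly the direct summands of representations of the form $e^v_\lambda(P)$ with $P$ projective over $\La$, and that for any vertex $v$ the functor $e^v$ is exact and sends projective representations to projective $\La$-modules (since $e^v e^w_\lambda(P)=\bigoplus_{\CQ(w,v)}P$). Dually, $e^v$ preserves injectives via its right adjoint $e^v_\rho$. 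The key observation is that a representation $\CX$ lies in $\gpr(\CQ,\La)$ if and only if it fits as a syzygy in an acyclic complex $\PP^\bullet$ of projective representations that stays acyclic after applying $\Hom_{\CQ}(-,\QQ)$ for every projective representation $\QQ$.

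For the forward direction, suppose $\CX\in\gpr(\CQ,\La)$ with totally acyclic complex $\PP^\bullet$ of projectives. Applying the exact functor $e^v$ gives an acyclic complex $e^v(\PP^\bullet)$ of projective $\La$-modules; testing total acyclicity against $\Hom_\La(-,\La)$ and using the adjunction $\Hom_\La(e^v(\PP^i),Q)\cong\Hom_{\CQ}(\PP^i,e^v_\rho(Q))$ — noting $e^v_\rho(Q)$ is a projective (indeed injective-relative) representation when $Q$ is projective over a self-injective-by-Gorenstein argument, or more directly using that one may test against all $e^w_\lambda(P)$ — shows $e^v(\PP^\bullet)$ is totally acyclic, so $\CX_v=e^v(\CX)$ is Gorenstein projective, giving $(i)$. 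For $(ii)$, I would use the standard mesh/exact sequence relating a representation to its "peak" data: for each vertex $v$ there is a short exact sequence of representations $0\to \bigoplus_{t(a)=v} e^v_\lambda(\CX_{s(a)})\to\cdots$ — more cleanly, one uses that the canonical map $\bigoplus_{t(a)=v}\CX_{s(a)}\to\CX_v$ is the $v$-component of a morphism of representations whose cone computes a syzygy, and since syzygies of Gorenstein projectives are Gorenstein projective and $\gpr(\CQ,\La)$ is resolving, the cokernel of $\CX^v$ inherits Gorenstein projectivity; the monomorphism part follows because $e^v_\lambda$ of Gorenstein projectives are again Gorenstein projective representations (they are even "projective over the arrows"), hence have no torsion obstructing injectivity of $\CX^v$ on the level of the complex.

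For the converse, assume $(i)$ and $(ii)$. I would construct a totally acyclic complex of projective representations with $\CX$ as a syzygy by a vertex-by-vertex (induction on a topological ordering of the acyclic quiver) argument. Over each $\La$-module $\CX_v$ choose a complete projective resolution; the subtlety is to assemble these compatibly along the arrows. The standard device is: lift $\CX$ to the right using $\Omega^{-1}$ computed inside $\rm{rep}(\CQ,\La)$, checking via $(ii)$ and Theorem~\ref{Gorenproj charecte}'s known statement for the "one step" case that $\Omega^{-1}_{\rm{rep}(\CQ,\La)}\CX$ again satisfies $(i)$ and $(ii)$; iterating builds the right half of the complex, while the left half comes from an ordinary projective resolution whose total acyclicity against projectives is checked vertexwise using the adjunctions and the fact that each $\CX_v$ is Gorenstein projective.

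I expect the main obstacle to be the converse direction, specifically verifying that the class of representations satisfying $(i)$ and $(ii)$ is closed under the cosyzygy operation $\Omega^{-1}$ inside $\rm{rep}(\CQ,\La)$ and that the resulting infinite complex is genuinely \emph{totally} acyclic (not merely acyclic) — this requires showing $\Hom_{\CQ}(-,\QQ)$-exactness for all projective $\QQ$, which one reduces via $\QQ=e^w_\lambda(P)$ and the adjunction $\Hom_{\CQ}(\CY,e^w_\lambda(P))$'s description to a statement about the individual $\Hom_\La(\CX_u, P)$ complexes together with compatibility along paths, and there the condition $(ii)$ (the cokernel being Gorenstein projective, hence $\Hom_\La(-,P)$-acyclic) is exactly what makes the gluing work. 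The forward direction's clause $(ii)$ also needs the small lemma that $e^v_\lambda$ sends $\gpr$-$\La$ into $\gpr(\CQ,\La)$, which itself follows from the already-recalled description of projective representations plus resolving-ness; I would state and use that as an auxiliary step.
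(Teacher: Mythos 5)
A preliminary remark: the paper offers no proof of Theorem \ref{Gorenproj charecte}; it is imported verbatim from \cite[Theorem 3.5.1]{EHS} and \cite[Theorem 5.1]{LuZ}, so your proposal can only be compared with the arguments there (which proceed, respectively, by a direct analysis of totally acyclic complexes of projective representations, and by reduction to the triangular matrix algebra $T_2(\La)$ followed by induction on the vertices). Your outline assembles the correct toolkit --- the exact adjoint triple $e^v_{\la}\dashv e^v\dashv e^v_{\rho}$, the description of projective representations as summands of the $e^v_{\la}(P)$, and the canonical short exact sequence $0 \rt \oplus_{a}e^{t(a)}_{\la}(\CX_{s(a)})\rt \oplus_{v}e^v_{\la}(\CX_v)\rt \CX\rt 0$ --- but it contains one false intermediate claim and leaves the two substantive halves of the theorem unproved. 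The false claim: for $Q$ projective over $\La$, the coinduced representation $e^v_{\rho}(Q)$ is in general \emph{not} a projective representation (for $A_2\colon 1\rt 2$ and $v=1$ it equals $(Q\rt 0)$). The forward direction of $(i)$ survives, but for a different reason that you should state: any representation whose components are all projective $\La$-modules has projective dimension at most one over $\La\CQ$ (apply the canonical sequence above), and a totally acyclic complex stays exact under $\Hom_{\CQ}(-,\CY)$ whenever $\CY$ has finite projective dimension; combined with the adjunction isomorphism $\Hom_{\La}(e^v(\PP^{\bu}),Q)\simeq\Hom_{\CQ}(\PP^{\bu},e^v_{\rho}(Q))$ this shows $e^v(\PP^{\bu})$ is totally acyclic and hence $\CX_v$ is Gorenstein projective.

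The genuine gaps are in $(ii)$ and in the converse. For the forward direction of $(ii)$, saying that ``the cone computes a syzygy'' establishes neither that $\CX^v$ is a monomorphism nor that its cokernel is Gorenstein projective: the functor $\CX\mapsto \Coker(\CX^v)$ is only right exact, so one cannot simply apply it to the totally acyclic complex $\PP^{\bu}$ and read off a totally acyclic complex with $\Coker(\CX^v)$ as a syzygy; controlling the failure of left exactness (which is precisely where the injectivity of $\CX^v$ comes from) is the technical heart of the proofs in \cite{EHS} and \cite{LuZ}. For the converse you correctly isolate what must be shown --- that the class of representations satisfying $(i)$ and $(ii)$ is closed under cosyzygies in ${\rm rep}(\CQ,\La)$, and that the doubly infinite complex assembled from an ordinary projective resolution and the iterated cosyzygies is totally acyclic when tested against the representations $e^w_{\la}(P)$ --- but you explicitly defer both verifications. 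Since the converse carries essentially all of the content of the theorem, what you have is a correct roadmap rather than a proof. The most economical way to fill it in is the route of \cite{LuZ}: prove the $A_2$ case by hand (the classical description of Gorenstein projective modules over $T_2(\La)$) and induct on the number of vertices by deleting a source, using the restriction functor together with the vanishing of $\Ext^1_{\CQ}$ from Gorenstein projective representations into representations whose components lie in $(\gpr\mbox{-}\La)^{\perp}$.
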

{It is worth pointing out that, since $\CQ$ is an acyclic quiver, one may easily see that the (Gorenstein) projectiveity of vertices in the local characterization of representations, the condition $(i)$, is redundant.}

Assume that $\mathsf{mono}(\CQ, \La)$ is the (separated) monomorphism category, consisting of all representations $\CX$ for which the $\La$-homorphism $\CX^v:\oplus_{t(a)=v}\CX_{s(a)} \rt \CX_v$, for any vertex $v$, is monomorphism. It is evident that $\mathsf{mono}(\CQ, \La)$ is an exact full subcategory of ${\rm rep}(\CQ, \La)$. Assume that $\overline{\mathsf{mono}}(\CQ, \La)$ and $\overline{\md}\mbox{-}\La$ are the respective injectively stable categories. Let $\CQ$ be an acyclic quiver. Gao et.al. constructed an epivalence functor $F: \overline{\mathsf{mono}}(\CQ, \La)\lrt {\rm rep}(\CQ, \overline{\md}\mbox{-}\La)$, and moreover, if $\CQ$ has at least one arrow, then $F$ is equivalence if and only if $\La$ is a hereditary algebra, see \cite[Theorems 5.6 and 5.14]{GKKP}. Recall that a functor is said to be epivalence (or representation equivalence in the sense of Auslander) if it is full and dense which reflects isomorphisms.
Motivated by the functor $F$, we define the functor $\Psi$ as follows.

Assume that $\pi:\gpr$-$\La \rt \ugpr$-$\La$ is the canonical functor, where $\ugpr$-$\La$ is the stable category of Gorenstein projective $\La$-modules. For simplicity, for any morphism $f$ in $\gpr$-$\La$, let $\underline{f}:=\pi(f)$, and to stress that we consider a Gorenstein projective module $G$ in the stable category $\ugpr$-$\Lambda$, we denote it by $\underline{G}$. We define the following functor
$$\Psi:=\Psi^{\CQ}:\gpr(\CQ, \La)\rt {\rm rep}(\CQ, \ugpr\mbox{-}\La)$$
$$\CX=(\CX_v)_{v \in V}\mapsto \Psi(\CX):=(\underline{\CX_v})_{v \in V}$$
$$f=(f_v)_{v \in V}\colon \CX \rt \CY \mapsto \Psi(f)=(\underline{f_v})_{v \in V}$$
One should note that, since each vertex of a projective presentation, is a projective $\La$-module, the functor $\Psi$ induces the functor
$$\underline{\Psi}:\ugpr(\CQ, \La)\rt {\rm rep}(\CQ, \ugpr\mbox{-} \La),
$$
where $\ugpr(\CQ, \La)$ is the stable category of $\gpr(\CQ, \La)$ modulo projective representations.
The aim of this section is to prove that $\underline{\Psi}$ is an epivalence, extending the above epivalence within the framework of exact categories. Precisely, we prove the following result.
\begin{theorem}\label{main}Let $\CQ$ be an acyclic quiver. Then $\underline{\Psi}$ is full and reflects isomorphisms. Moreover, $\underline{\Psi}$ is dense, whenever $\La$ is a G-semisimple finite dimensional algebra over an algebraic closed field $k$.
\end{theorem}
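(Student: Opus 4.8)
The plan is to transpose the strategy of Gao--Koenig--Kuelshammer--Psaroudakis \cite{GKKP} to the exact‑category setting of $\gpr(\CQ,\La)$; the tools are the local description of Gorenstein projective representations (Theorem \ref{Gorenproj charecte}), the vanishing $\Ext^1_\La(G,Q)=0$ for $G$ Gorenstein projective and $Q$ projective, and the fact that a Gorenstein projective module of finite projective dimension is projective. For \emph{fullness}, let $\phi=(\phi_v)_v\colon\Psi(\CX)\to\Psi(\CY)$ be a morphism in ${\rm rep}(\CQ,\ugpr\mbox{-}\La)$. Fix a topological ordering $v_1,\dots,v_n$ of the vertices of $\CQ$ in which every arrow raises the index, and construct liftings $f_{v_1},f_{v_2},\dots$ in $\gpr\mbox{-}\La$ by induction, keeping $\underline{f_{v_i}}=\phi_{v_i}$ and $f_{t(a)}\CX_a=\CY_a f_{s(a)}$ for all arrows among the vertices already treated. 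At stage $j$, take any lift $g$ of $\phi_{v_j}$; assembling the arrows $a_1,\dots,a_m$ into $v_j$, the difference $g\,\CX^{v_j}-\CY^{v_j}\bigl(\bigoplus_l f_{s(a_l)}\bigr)\colon\bigoplus_l\CX_{s(a_l)}\to\CY_{v_j}$ vanishes in the stable category, so it factors as $\beta\alpha$ through a projective module $P$. Since $\CX^{v_j}$ is a monomorphism with Gorenstein projective cokernel and $\Ext^1_\La(-,P)$ vanishes on that cokernel, $\alpha$ extends along $\CX^{v_j}$ to $\tilde\alpha\colon\CX_{v_j}\to P$, and $f_{v_j}:=g-\beta\tilde\alpha$ is the required lift. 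This produces $f\colon\CX\to\CY$ in $\gpr(\CQ,\La)$ with $\Psi(f)=\phi$, so $\underline{\Psi}$ is full.

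To see that $\underline{\Psi}$ \emph{reflects isomorphisms}, suppose $\underline{\Psi}(\bar f)$ is invertible, i.e. each $\underline{f_v}$ is an isomorphism in $\ugpr\mbox{-}\La$. Represent $f$ by a genuine morphism of $\gpr(\CQ,\La)$ and embed $\CX$ along an admissible monomorphism $\iota$ into an injective object $\mathcal{I}$ of the Frobenius category $\gpr(\CQ,\La)$ --- necessarily a projective representation. Put $\CZ:=\Coker\binom{f}{\iota}$, so that $0\to\CX\xrightarrow{\binom{f}{\iota}}\CY\oplus\mathcal{I}\to\CZ\to0$ is the conflation underlying the mapping cone of $\bar f$, with $\CZ\in\gpr(\CQ,\La)$. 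Evaluating at a vertex $v$ gives a triangle $\underline{\CX_v}\xrightarrow{\underline{f_v}}\underline{\CY_v}\to\underline{\CZ_v}\to$ in $\ugpr\mbox{-}\La$ (the summand $\mathcal{I}_v$ is projective, hence dies), whence $\CZ_v$ is projective for every $v$. As $\CZ^v$ is moreover a monomorphism with Gorenstein projective cokernel of projective dimension at most one, that cokernel is projective, so $\CZ^v$ splits; by the criterion recalled from \cite{EE}, $\CZ$ is a projective representation. Hence $\binom{f}{\iota}$ splits, and in $\ugpr(\CQ,\La)$ the map $\bar f$ becomes a split monomorphism with zero cokernel, i.e. an isomorphism.

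For \emph{density}, let $\CW\in{\rm rep}(\CQ,\ugpr\mbox{-}\La)$; view the $\CW_v$ as Gorenstein projective $\La$-modules and lift the $\CW_a$ to morphisms of $\gpr\mbox{-}\La$, then build $\CX\in\gpr(\CQ,\La)$ by induction on the topological order. With $\CX_{s(a)}$ and compatible isomorphisms $\underline{\CX_{s(a)}}\cong\CW_{s(a)}$ already fixed for the arrows $a$ into $v$, put $N:=\bigoplus_{s(a)\to v}\CX_{s(a)}$, lift the assembled map $\underline{N}\to\CW_v$ to $h\colon N\to\CW_v$, choose a monomorphism $\iota\colon N\hookrightarrow J$ with $J$ projective and $\Coker\iota$ Gorenstein projective, and set $\CX_v:=\CW_v\oplus J$, $\CX^v:=\binom{h}{\iota}$. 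Then $\CX^v$ is a monomorphism whose cokernel, being the pushout of $\CW_v\xleftarrow{h}N\xrightarrow{\iota}J$, is an extension of $\Coker\iota$ by $\CW_v$ and hence Gorenstein projective; thus $\CX\in\gpr(\CQ,\La)$ by Theorem \ref{Gorenproj charecte}, and discarding the projective summands $J$ yields $\underline{\Psi}(\CX)\cong\CW$. Under the hypothesis of the theorem, Propositions \ref{CM-finite} and \ref{Omega-algebra}, together with $k$ being algebraically closed, additionally identify $\ugpr\mbox{-}\La$ with the semisimple $k$-linear category having one pairwise $\Hom$-orthogonal simple object per indecomposable non-projective Gorenstein projective $\La$-module, whence ${\rm rep}(\CQ,\ugpr\mbox{-}\La)\simeq\prod_{G}{\rm rep}(\CQ,k)$; it is this description of the target, rather than bare density, that is exploited in the applications of the section.

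The delicate point is the inductive step of the fullness argument: producing at each vertex a \emph{single} lift simultaneously compatible with all incoming arrows. This, and the parallel bookkeeping in the density construction, is exactly where one must combine both halves of Theorem \ref{Gorenproj charecte} --- that $\CX^v$ is a monomorphism with Gorenstein projective cokernel --- with the vanishing $\Ext^1_\La(\text{Gorenstein projective},\text{projective})=0$; the remaining steps are formal manipulations of conflations in the Frobenius category $\gpr(\CQ,\La)$ and of the stabilization functor $\gpr\mbox{-}\La\to\ugpr\mbox{-}\La$.
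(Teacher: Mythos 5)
Your proposal is correct. The fullness step is essentially the paper's own argument (Proposition \ref{full}): induct over a topological order of the acyclic quiver, take an arbitrary lift at each vertex, and correct it by a map factoring through a projective, using that $\CX^{v}$ is a monomorphism with Gorenstein projective cokernel together with $\Ext^1_{\La}(\gpr\mbox{-}\La,\prj\La)=0$. The other two steps take genuinely different routes. For reflection of isomorphisms the paper builds the canonical two-term resolution $0\to\oplus_{a}e^{t(a)}_{\la}(\CX_{s(a)})\to\oplus_{v}e^{v}_{\la}(\CX_{v})\to\CX\to0$ and compares the induced triangles, whereas you form the cone $\CZ$ of $\bar f$ directly, observe $\underline{\CZ_v}=0$ for all $v$, and upgrade ``vertexwise projective'' to ``projective representation'' via the splitting criterion of \cite{EE}, using that a Gorenstein projective module of projective dimension at most one is projective; both are sound, and yours identifies the cone explicitly. (The one thing you should say out loud there is why the admissible monomorphism $\iota\colon\CX\to\mathcal{I}$ into a projective-injective representation exists, i.e.\ that $\gpr(\CQ,\La)$ is Frobenius; this follows from vertexwise embeddings into projectives exactly as in your density step, and the paper assumes it implicitly when invoking the triangulated structure.) For density the underlying construction is in fact the same as the paper's ($\CH_v=\CG_v\oplus(\oplus_{t(a)=v}P(\CH_{s(a)}))$ with $\CH^{v}$ having components a lift $h$ and the approximations $\gamma$, i.e.\ your $\binom{h}{\iota}$), but the verifications differ: the paper decomposes each $\CG_v$ into indecomposables, uses Remark \ref{dd} to make the entries of the lift literal scalar multiples of identities, and then checks the local condition through the surjectivity of an auxiliary map $l$ and a pullback diagram; you instead identify $\Coker\binom{h}{\iota}$ with the pushout of $\CW_v\xleftarrow{h}N\xrightarrow{\iota}J$, which is an extension of $\Coker\iota$ by $\CW_v$ and hence Gorenstein projective. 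This is cleaner, and notably it nowhere uses G-semisimplicity or the algebraic closedness of $k$, so your argument proves density of $\underline{\Psi}$ in full generality; the G-semisimple hypothesis is genuinely needed only where the paper exploits the resulting description of the target, namely to identify ${\rm rep}(\CQ,\ugpr\mbox{-}\La)$ with a product of copies of ${\rm rep}(\CQ,k)$ in Corollary \ref{dynkin} --- a point you yourself flag at the end.
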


The proof of the above theorem is divided into three steps in the sequel.
For an acyclic quiver $\CQ$, there is a partition of $V$, which is defined inductively. Set
$$V_1:=V_1(\CQ)=\{ v \in E \colon \nexists \ a \in E \ \text{ such that } \ s(a)= v\}$$
Suppose that $i>1$ is a number and that we have defined $V_j$ for every $j<i.$ Let
$$V_i:=V_i(\CQ)=\{ v \in V \setminus \cup_{j <i}V_j \colon \nexists \ a \in E\setminus \{a \colon t(a) \in \cup_{j<i} V_j\} \ \text{such that} \ s(a)=v\}$$

For an acyclic quiver $\CQ$, there exists an integer $m$ such that $V=\cup_{i \leq m} V_i$. Denote by $\mu(\CQ)$ the least ordinal number for which $V$ can be written in this way. For a number $n$, let $\CQ^n$ be the
subquiver of $\CQ$ whose set of vertices is $V^n=\cup_{i \leq n} V_i$.

Due to the characterization of Gorenstein projective presentations and the construction of $V_i$, it can be observed that $e^{\CQ^n}(\gpr(\CQ, \La))=\gpr(\CQ^n, \La)$. This indicates that the restriction functor $e^{\CQ^n}$ preserves Gorenstein projective representations. Additionally, it is straightforward to verify the existence of the commutative diagram
\begin{equation}
\xymatrix{\gpr(\CQ, \La) \ar[d]_{e^{\CQ^n}} \ar[r]^{\Psi^{\CQ}}& {\rm rep}(\CQ, \ugpr\mbox{-} \Lambda) \ar[d]^{e^{\CQ^n}} \\ \gpr(\CQ^n, \La) \ar[r]^{\Psi^{\CQ^n} } & {\rm rep}(\CQ^n, \ugpr \mbox{-} \Lambda). }
\end{equation}

For simplicity, we denote $\underline{\CX}:=\Psi(\CX)$ for every representation $\CX \in\gpr(\CQ, \La).$

\begin{proposition}\label{full}
The functor $\Psi$ is full. In particular, $\underline{\Psi}$ is also full.
\end{proposition}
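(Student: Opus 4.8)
The plan is to prove that $\Psi=\Psi^{\CQ}$ is full by induction on the number of vertices of $\CQ$, peeling off a sink at each step; fullness of $\underline{\Psi}$ will then be free. First I would record the reduction: a morphism of $\gpr(\CQ,\La)$ that factors through a projective representation has each of its components factoring through a projective $\La$-module, hence is annihilated by $\Psi$, so $\underline{\Psi}$ is literally the functor induced by $\Psi$ between the quotient categories, and fullness transfers verbatim. For the base of the induction, if $\CQ$ has no arrows then $\Psi$ is a product of copies of the canonical functor $\pi\colon\gpr\mbox{-}\La\to\ugpr\mbox{-}\La$, which is full by construction.

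For the inductive step I would fix a sink $w$ of $\CQ$ and let $\CQ'$ be the full subquiver on the remaining vertices. Since no arrow of $\CQ$ leaves $w$, every vertex of $\CQ'$ has the same incoming arrows in $\CQ'$ as in $\CQ$, so by Theorem \ref{Gorenproj charecte} restriction carries $\gpr(\CQ,\La)$ into $\gpr(\CQ',\La)$ and visibly commutes with the two copies of $\Psi$ (the evident "source-first" analogue of the square displayed just before the proposition). Given $\CX,\CY\in\gpr(\CQ,\La)$ and a morphism $\underline g=(\underline{g_v})_{v}\colon\Psi(\CX)\to\Psi(\CY)$ in ${\rm rep}(\CQ,\ugpr\mbox{-}\La)$, the induction hypothesis applied to $\CQ'$ would produce $f'=(f'_v)_{v\neq w}\colon\CX|_{\CQ'}\to\CY|_{\CQ'}$ with $\underline{f'_v}=\underline{g_v}$ for $v\neq w$, and the whole remaining problem is to extend $f'$ over $w$.

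That extension step is where the work is. I would let $b_1,\dots,b_m$ be the arrows ending at $w$, with sources $s_1,\dots,s_m$ (all in $\CQ'$), pick any $c_w\colon\CX_w\to\CY_w$ lifting $\underline{g_w}$, and measure the failure of compatibility by $\xi_j:=\CY_{b_j}\circ f'_{s_j}-c_w\circ\CX_{b_j}\colon\CX_{s_j}\to\CY_w$. Because $\underline g$ is a morphism over $\ugpr\mbox{-}\La$ and $\underline{f'_{s_j}}=\underline{g_{s_j}}$, each $\xi_j$ vanishes in $\ugpr\mbox{-}\La$, i.e. factors through a projective $\La$-module; so the single map $\Xi:=[\xi_1,\dots,\xi_m]\colon\bigoplus_j\CX_{s_j}\to\CY_w$ factors as $\Xi=q\circ p$ with $P$ projective, $p\colon\bigoplus_j\CX_{s_j}\to P$, $q\colon P\to\CY_w$. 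The key point is that, by Theorem \ref{Gorenproj charecte}, $\CX^w\colon\bigoplus_j\CX_{s_j}\to\CX_w$ is a monomorphism with Gorenstein projective cokernel, i.e. an admissible monomorphism of the exact category $\gpr\mbox{-}\La$, while every projective $\La$-module is injective in $\gpr\mbox{-}\La$ (since $\Ext^1_\La(-,P)$ vanishes on $\gpr\mbox{-}\La$, as one reads off a complete resolution); hence $p$ extends along $\CX^w$ to some $\tilde p\colon\CX_w\to P$. Putting $h_w:=q\circ\tilde p$ (which factors through $P$, so $\underline{h_w}=0$) and $f_w:=c_w+h_w$, one gets $\underline{f_w}=\underline{g_w}$ and $\CY_{b_j}\circ f'_{s_j}=f_w\circ\CX_{b_j}$ for all $j$, so $f'$ together with $f_w$ is a morphism $\CX\to\CY$ in $\gpr(\CQ,\La)$ (nothing to check at $w$ itself, as $w$ is a sink) with $\Psi(f)=\underline g$, completing the induction. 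I expect this extension step to be the only genuine obstacle: the idea is to peel a \emph{sink}, so that all constraints on the new component are incoming, then pack the incoming obstructions into one map through a single projective and absorb it via injectivity of projectives in $\gpr\mbox{-}\La$; the base case, the compatibility of $\Psi$ with restriction, and the final verifications are routine bookkeeping.
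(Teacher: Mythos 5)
Your proof is correct and follows essentially the same strategy as the paper's: both build the lift one vertex at a time through the acyclic structure (you peel off a sink, the paper fills in the source-level filtration $V_1, V_2, \dots$), and both absorb the discrepancy at the new vertex by a morphism factoring through a projective, obtained by extending along the admissible monomorphism $\CX^w$ using that its cokernel is Gorenstein projective, so that $\Ext^1_{\La}(-,P)$ vanishes on it. The only cosmetic difference is that the paper first routes the error through the canonical left $\prj\La$-approximation $\oplus_{t(a)=v}\alpha_{s(a)}$ and then extends that approximation over $\CX^v$, whereas you extend the factorization $p$ through a single projective directly; both rest on the same Ext-vanishing.
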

\begin{proof}
Let $\CX$ and $\CY$ be representations in $\gpr(\CQ, \La).$ Assume $h=(h_v)_{v \in V}$ is a morphism from $\underline{\CX}$ to $\underline{\CY}$ in ${\rm rep}(\CQ, \ugpr\mbox{-} \La)$. We construct inductively a morphism $g=(g_v)_{v \in V}$ in $\gpr(\CQ, \La)$ such that $\Psi(g)=h.$ For each vertex $v \in V_1$, we can choose $g_v:\CX_v\rt \CY_v$ in $\gpr\mbox{-}\La$ such that $h_v=\underline{g_v}$. Now assume that $n >1$ and we have constructed $g_v$ for every $v \in \cup_{j \leq n} V_j$ such that $\Psi^{\CQ^n}(g^n)=h^n$, where $g^n=(g_v)_{v \in V^n}$ and $h^n:=e^{\CQ^n}(h)$. Take a vertex $v \in V_{n+1}.$
Since $h$ is a morphism in ${\rm rep}(\CQ, \ugpr \mbox{-} \La)$ and $h_w=\underline{g_w}$ for every $w \in \cup_{j \leq n} V_j$, we have the following commutative diagram in $\md\mbox{-}\La$
\begin{equation}\label{Diagram:Lem-full}
\xymatrix{ \oplus_{t(a)=v}\underline{\CX}_{s(a)} \ar[d]_{\oplus_{t(a)=v}\underline{g_{s(a)}}} \ar@{=}[rr]&& \oplus_{t(a)=v}\underline{\CX}_{s(a)} \ar[d]_{\oplus_{t(a)=v}h_{s(a)}} \ar[r]^{\underline{\CX}^v}& \underline{\CX}_v \ar[d]^{h_v} \\ \oplus_{t(a)=v}\underline{\CX}_{s(a)} \ar@{=}[rr] && \oplus_{t(a)=v}\underline{\CY}_{s(a)} \ar[r]^{\underline{\CY}^v} & \underline{\CY}_v }
\end{equation}
Since for every vertex $w$, the module $\CX_w$ is a Gorenstein projective module, there exists a monomorphism $\CX_w\st{\alpha_w}\rt P_w$ with $P_w$ a projective module and its cokernel ${\mathsf{cok}}(\alpha_w)$ a Gorenstein projective module. Hence, by taking direct sum over vertices $\alpha_{s(a)}$ with $t(a)=v$, we get the monomorphism $\oplus_{t(a)=v}\alpha_a: \oplus_{t(a)=v} \CX_{s(a)}\rt \oplus_{t(a)=v}P_{s(a)}$ with the cokernel ${\mathsf{cok}}(\oplus_{t(a)=v}\alpha_a)=\oplus_{t(a)=v}{\mathsf{cok}}(\alpha_v)$. According to the diagram \ref{Diagram:Lem-full}, $h_v\underline{\CX}^v= \underline{\CY}^v\oplus_{t(a)=v}\underline{g_{s(a)}}$ in $\ugpr\mbox{-} \La$. Let $h_v=\underline{h'_v}$. Hence, $h'_v\CX^v-\CY^v\oplus_{t(a)=v}g_{s(a)}$ factors through a projective module. As $\oplus_{t(a)=v}\alpha_v$ is a left ${\mathsf{prj}}\mbox{-}\La$-approximation of $\oplus_{t(a)=v} \CX_{s(a)}$, $h_v\CX^v-\CY^v\oplus_{t(a)=v}g_{s(a)}$ factors through $\oplus_{t(a)=v}P_{s(a)}$ namely, there is the following factorization

$$\xymatrix@1{ 0\ar[r] & \oplus_{t(a)=v} \CX_{s(a)}\ar[d]_{h'_v\CX^v-\CY^v\oplus_{t(a)=v}g_{s(a)}}
\ar[rr]^{\oplus_{t(a)=v}\alpha_{s(a)}}
&& \oplus_{t(a)=v}P_v\ar@{.>}[dll]^{d} \\ & \CY_v&}$$
By Theorem \ref{Gorenproj charecte}, the cokernel of $\CX^v$ is a Gorenstein projective module, which yields the existence of the following factorization:
$$\xymatrix@1{ 0\ar[r] & \oplus_{t(a)=v} \CX_{s(a)}\ar[d]_{\oplus_{t(a)=v}\alpha_{s(a)}}
\ar[r]^>>>>{\CX^v}
& \CX_v \ar@{.>}[dl]^{r} \\ & \oplus_{t(a)=v}P_v&}$$
{Combining the above two diagrams, gives us } the following commutative diagram:
\begin{equation*}
\xymatrix{ & \oplus_{t(a)=v}\CX_{s(a)} \ar[d]_{\oplus_{t(a)=v}g_{s(a)}} \ar[r]^{\CX^v}& \CX_v \ar[d]^{h'_v-dr} \\ & \oplus_{t(a)=v}\CY_{s(a)} \ar[r]^{\CY^v} & \CY_v }
\end{equation*}
Set $g_v:=h'_v-dr$. Note that $\underline{h'_v-dr}=\underline{h'_v}- \underline{dr}=\underline{h'_v}=h_v$. By repeating this process for every vertex $v$ in $V_{n+1}$, one obtains a morphism $g^{n+1}=(g_v)_{v \in V^{n+1}}$ which satisfies the required conditions. So the proof is complete.
\end{proof}

Let $\CX$ be a representation in ${\rm rep}(\CQ, \La)$. There exists an short exact sequence 
\begin{equation}\label{eq. short}
0 \rt \oplus_{a \in E}e^{t(a)}_{\la}(\CX_{s(a)})\st{f_{\CX}}\rt \oplus_{v \in V}e^v_{\la}(\CX_v)\st{g_{\CX}}\rt \CX\rt 0
\end{equation}
where $f_{\CX}=(f_{\CX,w})_{w\in V}$ and $g_{\CX}=(g_{\CX, w})_{w \in V}$ are defined as follows:
\begin{itemize}
   \item For a vertex $w \in V$, let $\CQ(w)$ denote all paths ending at $w$, including the trivial path $e_w$. Additionally, define $\CQ'(w) = \CQ(w) - \{e_w\}$;
\item Any element of $(\oplus_{a \in V}e^v_{\lambda}(\CX_v))_w$ can be represented as $(\beta_q)_{q \in \CQ(w)}$, where $\beta_q \in \CX_{s(q)}$. Similarly, since $\CQ$ is acyclic, any element in $(\oplus_{a \in E}e^{t(a)}_{\lambda}(\CX_{s(a)}))_w$ can be viewed as $(\beta_q)_{q \in \CQ'(w)}$ with $\beta_q \in \CX_{s(q)}$;

\item For an  element $\beta=(\beta_q)_{q \in \CQ(w)}$ in  $(\oplus_{a \in V}e^v_{\lambda}(\CX_v))_w$, define $g_{\CX, w}(\beta):=\Sigma_{q \in \CQ(w)} \CX_{q}(\beta_q);$ 
\item  Consider   $\beta=(\beta_q)_{q \in \CQ'(w)}$ as  an element  in  $(\oplus_{a \in E}e^{t(a)}_{\lambda}(\CX_{s(a)}))_w$. For any $q \in \CQ'(w)$, define $\overline{\beta}_q:=(\beta'_p)_{p \in \CQ'(w)}$, where $\beta'_q=\beta_q$, and $\beta'_p=0$ if $p\neq q.$ To define $f_{\CX, w}(\beta)$, it suffices  to define $f_{\CX, w}(\overline{\beta}_q)$ for any $q \in \CQ'(w)$. Thus, let $q=ha \in \CQ'(w)$ with arrow $a$, and  define $f_{\CX, w}(\overline{\beta}_q):=(\alpha_{p })_{p \in \CQ(w)}$, where $\alpha_q=\beta_q$ and $\alpha_h:=-\CX_a(\beta_q)$, and  all other components are zero.    
\end{itemize}

The above short exact sequence behave  canonically. This meant that if there exists a morphism $\phi=(\phi_v)_{v\in V}:\CX\rt \mathcal{Y}$ in ${\rm rep}(\CQ, \La)$, we have the following commutative diagram:

\begin{equation} \label{eq. diagram}   
\xymatrix{
0 \ar[r] &\oplus_{a \in E}e^{t(a)}_{\la}(\CX_{s(a)}) \ar[d]^{\phi_1} \ar[r]^{f_{\CX}} & \oplus_{v \in V}e^v_{\la}(\CX_v) \ar[d]^{\phi_0}
\ar[r]^<<<{g_{\CX}} &\CX \ar[d]^{\phi}\ar[r] & 0&\\
0 \ar[r] & \oplus_{a \in E}e^{t(a)}_{\la}(\CY_{s(a)}) \ar[r]^{f_{\CY}} & \oplus_{v \in V}e^v_{\la}(\CY_v)
\ar[r]^<<<{g_{\CY}} & \CY\ar[r] &0.& \\ } 
\end{equation}
Here,  the matrix presentations  of  $\phi_0$ and $\phi_1$ with respect to the corresponding factorizations are diagonal matrices $[e^v_{\la}(\phi_v)]_{v \in V}$ and $[e^v_{\la}(\phi_{s(a)})]_{a \in E}$, respectively.

\begin{proposition}\label{faithful}
The functor $\underline{\Psi}$ reflects isomorphisms.
\end{proposition}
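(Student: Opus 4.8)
The plan is: let $f\colon\CX\rt\CY$ be a morphism in $\gpr(\CQ,\La)$ such that $\underline{\Psi}(\underline f)=(\underline{f_v})_{v\in V}$ is an isomorphism in ${\rm rep}(\CQ,\ugpr\mbox{-}\La)$; since a morphism of representations is invertible exactly when it is invertible at every vertex, this means that each $\underline{f_v}\colon\underline{\CX_v}\rt\underline{\CY_v}$ is an isomorphism in $\ugpr\mbox{-}\La$. I want to conclude that $\underline f$ is an isomorphism in $\ugpr(\CQ,\La)$. Recall that $\gpr(\CQ,\La)$ is a Frobenius exact category whose projective–injective objects are precisely the projective representations (see \cite{EHS, LuZ}); hence $\ugpr(\CQ,\La)$ is triangulated and, by the very definition of this stable category, a representation $\CC\in\gpr(\CQ,\La)$ satisfies $\underline{\CC}=0$ if and only if $\CC$ is a projective representation. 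Since a morphism in a triangulated category is invertible exactly when its mapping cone vanishes, it suffices to produce a representative $\CC\in\gpr(\CQ,\La)$ of the cone of $\underline f$ and to prove that it is a projective representation.

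I would construct $\CC$ explicitly. As a Gorenstein projective object of ${\rm rep}(\CQ,\La)$, $\CX$ fits into a conflation $0\rt\CX\st{u}\rt\CI\rt\CX'\rt 0$ in $\gpr(\CQ,\La)$ with $\CI$ a projective representation and $\CX'\in\gpr(\CQ,\La)$. Pushing $u$ out along $f$ yields $\CC$ together with a short exact sequence $0\rt\CY\rt\CC\rt\CX'\rt 0$; as $\gpr(\CQ,\La)$ is closed under extensions, $\CC\in\gpr(\CQ,\La)$, and by construction $\underline{\CC}$ is the mapping cone of $\underline f$. The crucial point is that both the pushout and this cone construction are carried out vertexwise: at a vertex $v$ one has the conflation $0\rt\CX_v\st{u_v}\rt\CI_v\rt\CX'_v\rt 0$ in $\gpr\mbox{-}\La$ with $\CI_v$ projective, and $\CC_v$ is its pushout along $f_v$, so $\underline{\CC_v}$ is exactly the mapping cone of $\underline{f_v}$ in $\ugpr\mbox{-}\La$. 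Since each $\underline{f_v}$ is an isomorphism, $\underline{\CC_v}=0$, i.e., every $\CC_v$ is a projective $\La$-module.

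It then remains to promote ``all vertices projective'' to ``$\CC$ is a projective representation''. Because $\CC\in\gpr(\CQ,\La)$, Theorem \ref{Gorenproj charecte}$(ii)$ gives that $\CC^v\colon\oplus_{t(a)=v}\CC_{s(a)}\rt\CC_v$ is a monomorphism with $\mathsf{cok}\,\CC^v$ Gorenstein projective. Here $\oplus_{t(a)=v}\CC_{s(a)}$ and $\CC_v$ are projective, so the sequence $0\rt\oplus_{t(a)=v}\CC_{s(a)}\rt\CC_v\rt\mathsf{cok}\,\CC^v\rt 0$ forces $\pd\,\mathsf{cok}\,\CC^v\leqslant 1$; a Gorenstein projective module of finite projective dimension is projective, hence this sequence splits and $\CC^v$ is a split monomorphism. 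By the projectivity criterion for representations of an acyclic quiver (\cite[Theorem 3.1]{EE}), $\CC$ is a projective representation, whence $\underline{\CC}=0$ and therefore $\underline f$ is an isomorphism. Note that G-semisimplicity is not used here, in accordance with the statement of Theorem \ref{main}.

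The one delicate point to get right is the compatibility bookkeeping: one must check that the chosen conflation $0\rt\CX\rt\CI\rt\CX'\rt 0$ restricts at each vertex to a conflation of the Frobenius category $\gpr\mbox{-}\La$ with projective–injective middle term, so that forming the pushout along $f$ genuinely realizes $\underline{\CC}$ (resp.\ each $\underline{\CC_v}$) as the mapping cone of $\underline f$ (resp.\ $\underline{f_v}$) in the respective triangulated stable category. Granting these identifications, the remaining steps are the standard ``isomorphism $\Leftrightarrow$ vanishing cone'' principle together with the elementary module-theoretic facts used above.
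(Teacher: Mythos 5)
Your proof is correct, and it takes a genuinely different route from the paper's. The paper sets up the canonical presentation $0\rt\oplus_{a\in E}e^{t(a)}_{\la}(\CX_{s(a)})\rt\oplus_{v\in V}e^v_{\la}(\CX_v)\rt\CX\rt 0$ of a representation (the sequence \ref{eq. short}), observes that it is functorial, and notes that a morphism $\phi$ with every $\underline{\phi_v}$ invertible induces a morphism of the associated triangles in $\ugpr(\CQ,\La)$ whose first two legs are direct sums of the maps $e^v_{\la}(\phi_v)$, hence isomorphisms; the two-out-of-three property for morphisms of triangles then forces $\underline{\phi}$ to be an isomorphism. You instead compute the cone of $\underline{f}$ directly: you realize it by a pushout along an embedding of $\CX$ into a projective representation, note that this construction is vertexwise so that each vertex of the cone is the cone of the invertible map $\underline{f_v}$ and hence a projective $\La$-module, and then upgrade vertexwise projectivity to projectivity of the whole representation via the local characterization (Theorem \ref{Gorenproj charecte} together with the Enochs--Estrada criterion) and the standard fact that a Gorenstein projective module of finite projective dimension is projective. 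The trade-off is that the paper's argument is shorter once the standard resolution is available (and that resolution is introduced essentially only for this purpose), whereas your argument dispenses with it entirely at the cost of invoking the local characterizations; both correctly avoid the G-semisimplicity hypothesis, which is needed only for density. The compatibility points you flag as delicate (the conflation into a projective representation restricting at each vertex to a conflation in $\gpr\mbox{-}\La$ with projective middle term, and the pushout computing the cone both globally and vertexwise) do hold, since evaluation at a vertex is exact, projective representations have projective vertices, and pushouts in $\rm{rep}(\CQ,\La)$ are formed vertexwise.
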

\begin{proof}
Let $\phi'=(\phi'_v)_{v\in V}:\CX\rt \CY$ be a morphism in $\ugpr(\CQ, \La)$ such that $\underline{\Psi}(\phi')$ is  an isomorphism in ${\rm rep}(\CQ, \ugpr\mbox{-} \La)$. Assume $\phi=(\phi_v)_{v\in V}:\CX\rt \CY$ is a morphism in $\gpr(\CQ, \La)$ such that after applying the canonical functor from $\gpr(\CQ, \La)$ to $\ugpr(\CQ, \La)$,  we obtain $\phi'$. In particular, for every vertex $v$, $\pi(\phi_v)=\underline{\phi_v}=\phi'_v.$   Applying \ref{eq. short} for the representations $\CX$ and $\CY$, we derive the short exact sequences 

$$\epsilon_1: \  \ 0 \rt \oplus_{a \in E}e^{t(a)}_{\la}(\CX_{s(a)})\st{f_{\CX}}\rt \oplus_{v \in V}e^v_{\la}(\CX_v)\st{g_{\CX}}\rt \CX\rt 0$$
and 
$$\epsilon_2: \ \  0 \rt \oplus_{a \in E}e^{t(a)}_{\la}(\CY_{s(a)})\st{f_{\CY}}\rt \oplus_{v \in V}e^v_{\la}(\CY_v)\st{g_{\CY}}\rt \CY\rt 0 $$
in the exact category $\gpr(\CQ, \La)$. For a given  vertex $v$ and Gorenstein projective module $G$, one may apply Theorem \ref{Gorenproj charecte} and deduce  that $e^v_{\la}(G)$ is a Gorenstein projective representation.

Due to the  construction of triangles in the triangulated category $\gpr(\CQ, \La)$, the short exact sequences $\epsilon_1$ and $\epsilon_2$ induces respectively  the following triangles:

$$ \oplus_{a \in E}e^{t(a)}_{\la}(\CX_{s(a)})\st{\underline{f_{\CX}}} \rt  \oplus_{v \in V}e^v_{\la}(\CX_v) \st{\underline{g_{\CX}}}\rt \CX \rightsquigarrow $$
and 
$$ \oplus_{a \in E}e^{t(a)}_{\la}(\CY_{s(a)})\st{\underline{f_{\CY}}} \rt  \oplus_{v \in V}e^v_{\la}(\CY_v) \st{\underline{g_{\CY}}}\rt \CY \rightsquigarrow. $$
Here, ``underline'' indicates the  residue class of the given map in the stable category   $\ugpr(\CQ, \La)$.  By applying \ref{eq. diagram} to the morphism $\phi:\CX \rt \CY$, we obtain a commutative diagram in $\gpr(\CQ, \La)$ as given in \ref{eq. diagram}. Based on the above, the resulting commutative diagram induces the following commutative diagram in $\ugpr(\CQ, \La)$:

\begin{equation}\label{eq.diam2}
\xymatrix{
 \oplus_{a \in E}e^{t(a)}_{\la}(\CX_{s(a)}) \ar[d]^{\underline{\phi_1}} \ar[r]^{\underline{f_{\CX}} } & \oplus_{v \in V}e^v_{\la}(\CX_v) \ar[d]^{\underline{\phi_0} }
\ar[r]^<<<{\underline{g_{\CX}}} &\CX \ar[d]^{\phi'}\rightsquigarrow& \\
 \oplus_{a \in E}e^{t(a)}_{\la}(\CY_{s(a)}) \ar[r]^{\underline{f_{\CY}}} & \oplus_{v \in V}e^v_{\la}(\CY_v)
\ar[r]^<<<{\underline{g_{\CY}}} & \CY  \rightsquigarrow & \\ } 
\end{equation}
with the triangle rows. Note that $\phi'=\underline{\phi}$, and especially the matrix presentations  of  $\underline{\phi_0}$ and $\underline{\phi_1} $ with respect to the corresponding factorization are the  diagonal matrices $[e^v_{\la}(\phi'_v)]_{v \in V}$ and $[e^v_{\la}(\phi'_{s(a)})]_{a \in E}$, respectively. Since  $\underline{\Psi}(\phi')$ is  an isomorphism in ${\rm rep}(\CQ, \ugpr\mbox{-} \La)$, this follows that for every vertex $v \in V$, $\phi'_v$ is an isomorphism in $\ugpr\mbox{-} \La$. Equivalently, for every vertex $v \in V$, $e^v_{\la}(\phi'_v)$ is an isomorphism in $\ugpr\mbox{-} \La$. Hence, $\underline{\phi_0}$ and $ \underline{\phi_1}$ are isomorphisms in $\ugpr(\CQ, \La)$. In view of Diagram  \ref{eq.diam2}, we obtain  $\phi'$ is an isomorphism, as desired.
\end{proof}

\begin{remark}\label{dd}
{The density of the functor $\underline{\Psi}$ relies on the fact that if $\La$ is a G-semisimple finite dimensional algebra over an algebraic closed field $k$, then for}
every indecomposable modules $\underline{X}$ and $\underline{Y}$ in $\ugpr\mbox{-}\La$, $\underline{\Hom}_{\La}(\underline{X}, \underline{Y})=0$ if $\underline{X}\ncong \underline{Y}$, and each non-zero morphism $\underline{f}$ in $\underline{\Hom}_{\La}(\underline{X}, \underline{Y})$ is an isomorphism if $\underline{X} \cong \underline{Y}.$ In particular, $\underline{\rm{End}}_{\La}(\underline{X})\simeq k$, i.e. every morphism $f \in \underline{\rm{End}}_{\La}(\underline{X})$ can be written as $f=a1_{\underline{X}}$ for some $a \in k.$ This observation was taken into account during the proof of Proposition \ref{proposition 2.8}.
\end{remark}

{Assume that $G$ is a Gorenstein projective module. Then it fits into a short exact sequence,}
$$0 \rightarrow G \xrightarrow{\gamma_G} P(G) \xrightarrow{\zeta_G} C(G) \rightarrow 0,$$
where $\zeta_G$ represents the projective cover. Consequently, $\gamma_G$ serves as a minimal left ${\mathsf{ prj}}\mbox{-}\La$-approximation. In the special case when $G$ is a projective module, we set $P(G)=G$ and $\gamma_G$ as the identity map.

{{\bf Proof of Theorem \ref{main}.\\}}
{According to Propositions \ref{full} and \ref{faithful}, we only need to show the second assertion. So assume that $\La$ is G-semisimple finite dimensional algebra over an algebraic closed field $k$. We have to show that the functor $\underline{\Psi} $ is dense. To do this, take } a representation $\CG=(\CG_v, f_a)_{v \in V, a \in E}$ in ${\rm rep}(\CQ, \ugpr\mbox{-}\La)$. Our goal is to construct a Gorenstein projective representation $\CH=(\CH_v, h_a)_{v \in V, a \in E}$ inductively such that $\Psi(\CH)\simeq \CG.$ By Proposition \ref{CM-finite}, $\La$ is a $\cm$-finite algebra. Assume that $\{G_1, \cdots, G_n\}$ is a complete set of pairwise non-isomorphic indecomposable Gorenstein projective non-projective modules. Without loss generality, one may assume that for any $v \in V$, $\CG_v=\CG_1^v\oplus \cdots \oplus \CG^v_{\alpha_v}$, which is a decomposition of $\CG_v$ into indecomposable modules $G_i$ in $\gpr\mbox{-}\La$. For every $v \in V^1$, let $\CH_v=\CG_v$. Assume that we have constructed $\CH_v$ and $\CH_a$ respectively for every vertex $v$ and arrow $a$ in $\CQ^n$ {such that for any vertex $v\in V^n$}:
\begin{itemize}
\item [$(i)$] $\CH_v=\CG_v\oplus P_v$, where $P_v$ is a projective module;
\item [$(ii)$] $\pi(\CH_v)=\underline{\CH}_v=\CG_v$, and for every $a \in E,$ $\pi(\CH_a)=\underline{\CH}_a=\CG_a$;
\item [$(iii)$] ${\mathsf{cok}} (\CH^v)$ is a Gorenstein projective module, and $\CH^v$ is injective.
\end{itemize}

For a given vertex $v \in V^{n+1}$, define $\CH_v=\CG_v \oplus_{t(a)=v}P(\CH_{s(a)})$. Assume that $\alpha_1, \cdots, \alpha_m$ are all the arrows with terminal vertex $v$

\[\xymatrix{v_{1}\ar[rd]^{a_{1}} & & \\ \vdots & v & \\ v_{m}\ar[ru]_{a_{m}} & & }\]
Clearly, all vertices $v_i$ lie in $V^{n}$. Note that by the inductive hypothesis, we have already defined $\CH_{v_i}$. Let $1 \leq d \leq m$ and $s(a_d)=v_d.$ By the representation $\CG$, we have the morphism $\CG_{a_d}:\CG_{v_d}\rt \CG_v$. According to the decompositions $\CG_{v}=\CG_1^v\oplus \cdots \oplus \CG^v_{\alpha_v}$ and $\CG_{v_d}=\CG_1^{v_d}\oplus \cdots \oplus \CG^{v_d}_{\alpha_{v_d}}$ in $\gpr\mbox{-}\La$, one has the matrix presentation $[(\CG_{a_d})_{ij}:\CG_i^{v_d}\rt \CG_j^{v}]_{i \in I, j \in J}$ of $\CG_{a_d}$, where $I=\{1, \cdots, \alpha_{v_d}\}$ and $J=\{1, \cdots, \alpha_{v}\}$. By Remark \ref{dd}, $(\CG_v)_{ij}=0$ or $(\CG_v)_{ij}=s_{d, i, j}1_{\underline{\CG^v_j}}$, for some non-zero element $s_{d, i, j} \in k$, in $ \ugpr\mbox{-}\La$. One should note that in the latter case, $\CG_i^{v_d}$ must be equal to $\CG_j^{v}$, and indeed $1_{1_{\underline{\CG^v_j}}}=\pi(1_{\CG^v_j})$, where $1_{\CG^v_j}$ is the identity on $\CG^v_j$ in $\gpr\mbox{-}\La$.

Define $h'_{a_d}:\CG_{v_d}=\CG_1^{v_d}\oplus \cdots \oplus \CG^{v_d}_{\alpha_{v_d}}\rt \CG_{v}=\CG_1^v\oplus \cdots \oplus \CG^v_{\alpha_v}$ by the matrix factorization $[(h'_{a_d})_{ij}]_{i \in I, j \in J}$ which is defined as follows:
\begin{itemize}
\item [$(1)$] $(h'_{a_d})_{ij}=0$, if $(\CG_{a_d})_{ij}=0$
\item [$(2)$] $(h'_{a_d})_{ij}=s_{d, i, j}1_{\CG^v_j}$, if $(\CG_{a_d})_{ij}\neq 0$
\end{itemize}

Now, we are ready to define $\CH_{a_d}:\CH_{v_d}\rt \CH_v$. It is defined as follows:
\begin{itemize}
\item [$(1)$] It has no image in $P(\CH_{s(a)})$ with $s(a)\neq v_d;$
\item [$(2)$] Its projection to $\CG_v \oplus P(\CH_{v_d})$ has the matrix factorization $[h'_{a_d}~~\gamma_{\CH_{v_d}}]$.
\end{itemize}
Repeat the same process to define $\CH_{a_i}$ for every $1 \leq i \leq m$. We will show that $\CH^v$ is injective with a Gorenstein projective cokernel.\\
Assume that $J_0$ is a subset of $J=\{1, \cdots, \alpha_v\}$ consisting of all $i$ such that for every $1 \leq d \leq m$, the morphism $\CG_{a_d}$ has no image in $\CG^v_i.$ We define $l$ as the composition of two morphisms: $\oplus_{t(a)=v} \CH_{s(a)} \rt{\CH^v}\rt \CH_v\rt \oplus_{i \in I\setminus I_0}\CG^v_{i} $, where the second morphism is the projective map. By the construction, $l$ is surjective. Consider the following pull-back diagram:
$$\xymatrix{&	&0\ar[d]&0 \ar[d]& \\	0\ar[r]& \Ker(l) \ar@{=}[d] \ar[r] & \oplus_{t(a)=v}\CH_{s(a)}\ar[d]^{\oplus_{t(a)=v}\gamma_{\CH_{{s(a)}}}}
\ar[r]^{l} & \oplus_{i \in I\setminus I_0}\CG^v_{i}\ar[d]\ar[r]& 0 \\ 0\ar[r]
&	\Ker(l) \ar[r] & \oplus_{t(a)=v}P(\CH_{s(a)}) \ar[r]\ar[d]
& \CU \ar[r]\ar[d]&0 \\& & \oplus_{t(a)=v}C(\CH_{s(a)})\ar[d]\ar@{=}[r]& \oplus_{t(a)=v}C(\CH_{s(a)})\ar[d]& \\&&0&0&}	$$ Since the ending terms in the right-hand column are Gorenstein-projective, the same is true for $\CU$. Moreover, according to the right-most square, there is a short exact sequence

$$(*) \ \ \ 0 \rt \oplus_{t(a)=v}\CH_{{s(a)}} \st{{\left[\begin{smallmatrix}l\\ {\oplus_{t(a)=v}\gamma_{\CH_{s(a)}}}\end{smallmatrix}\right]}} \rt (\oplus_{i \in I\setminus I_0}\CG^v_{i}) \oplus ( \oplus_{t(a)=v}P(\CH_{s(a)}))\rt \CU\rt 0.$$ The above pull-back diagram gives the matrix presentation for the associated morphism $\CH^v:\oplus_{t(a)=v}\CH_{s(a)}\rt (\oplus_{i \in J\setminus J_0}\CG^v_{i}) \oplus (\oplus_{i \in J_0}\CG^v_{i})$ as $\CH^v=[\phi~~0]$, where $\phi={{\left[\begin{smallmatrix}l\\ {\oplus_{t(a)=v}\gamma_{\CH_{s(a)}}}\end{smallmatrix}\right]}}$. This implies that $\CH^v$ is injective. This in conjunction with the short exact sequence $(*)$, forces cokernel $\CU\oplus (\oplus_{i \in J_0}\CG^v_{i})$ to be Gorenstein projective. Thus the proof of the claim is completed. Continue this process until all vertices in $V_{\mu(\CQ)}$ are reached. This will give us
the desired representation $\CH$ in $\gpr\mbox{-}(\CQ, \La)$. So the proof is finished. \ \ \ \ \ \ \ \ \ \ \ \ \ \ \ \ \ \ \ \ \ \ \ \ \ \ \ \ \ \ \ \ \ \ \ \ \ \ \ \ \ \ \ \ \ \ \ \ \ \ \ \ \ \ \ \ \ \ \ \ \ \ \ \ \ \ \ \ \ \ \ \ \ \ \ \ \ $\square$\\

Similar to the classification given by \cite[Theorem C]{GKKP}, we obtain the following Gabriel-style classification of Gorenstein projective representations over G-semisimple algebras.
\begin{corollary}\label{dynkin}
Let $\CQ$ be an acyclic quiver, and let $\La$ be a {\rm G}-semisimple finite-dimensional algebra over an algebraically closed field $k$.
Let $m$ be the number of isomorphic classes of indecomposable non-projective Gorenstein projective modules. Then the following statements hold.
\begin{itemize}
\item [$(i)$] The path algebra $\La \CQ$ is $\cm$-finite if and only if $\CQ$ is Dynkin.
\item [$(ii)$] The number of isomorphism classes of indecomposable non-projective Gorenstein projective representation is $m|\Phi^+|,$ where $|\Phi^+|$ is the set of
positive roots of the (corresponding) Dynkin diagram.
\end{itemize}
\end{corollary}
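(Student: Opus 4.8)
The plan is to derive both parts from the epivalence $\underline{\Psi}\colon\ugpr(\CQ, \La)\to{\rm rep}(\CQ,\ugpr\mbox{-}\La)$ of Theorem \ref{main}, the ``semisimplicity'' of $\ugpr\mbox{-}\La$ recorded in Remark \ref{dd}, and Gabriel's theorem. First I would fix the framework. Since $\CQ$ is finite acyclic and $\La$ is finite dimensional over $k$, the path algebra $\La\CQ$ is finite dimensional over $k$, representations of $\CQ$ by $\La$-modules are the same as $\La\CQ$-modules, and under this identification $\gpr(\CQ, \La)$ corresponds to $\gpr\mbox{-}\La\CQ$ (using Theorem \ref{Gorenproj charecte} and the references cited there). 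In particular $\gpr(\CQ, \La)$ is a Frobenius exact category, hence Krull--Schmidt; its stable category $\ugpr(\CQ, \La)$ is a Krull--Schmidt triangulated category whose indecomposable objects are exactly the indecomposable non-projective Gorenstein projective representations of $\CQ$. The category ${\rm rep}(\CQ,\ugpr\mbox{-}\La)$ is Krull--Schmidt because $\ugpr\mbox{-}\La$ is.

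The next and key step is to show that $\underline{\Psi}$ induces a bijection between isomorphism classes of indecomposable non-projective Gorenstein projective representations of $\CQ$ and isomorphism classes of indecomposable nonzero objects of ${\rm rep}(\CQ,\ugpr\mbox{-}\La)$. The auxiliary fact needed is that $\underline{\Psi}$ \emph{reflects zero objects}: if $\underline{\Psi}(\CX)\cong 0$ then each $\CX_v$ is projective, so for every vertex $v$ the monomorphism $\CX^v\colon\oplus_{t(a)=v}\CX_{s(a)}\to\CX_v$ has a cokernel of projective dimension at most one; being Gorenstein projective (Theorem \ref{Gorenproj charecte}), that cokernel is then projective, so $\CX^v$ splits and $\CX$ is a projective representation, i.e.\ $\CX\cong 0$ in $\ugpr(\CQ, \La)$. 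Now fullness together with the fact that $\underline{\Psi}$ reflects isomorphisms shows $\underline{\Psi}$ is injective on isomorphism classes of objects, density makes it surjective, and a short Krull--Schmidt argument---using that $\underline{\Psi}$ reflects zero objects to rule out a nonzero indecomposable being sent to a zero summand---promotes this to the desired bijection on indecomposables in both directions.

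Then I would identify the target concretely. By Remark \ref{dd}, $\ugpr\mbox{-}\La$ is a Hom-finite $k$-linear Krull--Schmidt category with exactly $m$ indecomposable objects, pairwise Hom-orthogonal and each with endomorphism algebra $k$; such a category is equivalent, as an additive $k$-category, to $(\md\mbox{-}k)^{m}$. Since ${\rm rep}(\CQ,-)$ commutes with finite products of additive categories, ${\rm rep}(\CQ,\ugpr\mbox{-}\La)\simeq{\rm rep}(\CQ,k)^{m}$, so its indecomposable nonzero objects form $m$ disjoint copies of the indecomposable representations of $\CQ$ over $k$. By Gabriel's theorem, ${\rm rep}(\CQ,k)$ is representation finite if and only if $\CQ$ is Dynkin, in which case the number of its indecomposables equals $|\Phi^{+}|$.

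Putting this together finishes the proof. For $(i)$: $\La\CQ$ is $\cm$-finite iff $\gpr\mbox{-}\La\CQ$ has only finitely many indecomposables iff $\ugpr(\CQ, \La)$ has only finitely many indecomposables (there are finitely many indecomposable projective $\La\CQ$-modules), which by the bijection holds iff ${\rm rep}(\CQ,k)^{m}$---equivalently ${\rm rep}(\CQ,k)$---is representation finite, i.e.\ iff $\CQ$ is Dynkin. For $(ii)$, with $\CQ$ Dynkin, the bijection gives that the number of isomorphism classes of indecomposable non-projective Gorenstein projective representations equals the number of indecomposable nonzero objects of ${\rm rep}(\CQ,k)^{m}$, namely $m|\Phi^{+}|$. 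I expect the main obstacle to be the second step---verifying that $\underline{\Psi}$ reflects zero objects and running the Krull--Schmidt argument so that indecomposability transfers in both directions---while the identification of ${\rm rep}(\CQ,\ugpr\mbox{-}\La)$ and the appeal to Gabriel's theorem are routine.
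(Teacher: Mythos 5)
Your proposal is correct and follows essentially the same route as the paper: pass through the epivalence $\underline{\Psi}$ of Theorem \ref{main}, use Remark \ref{dd} to identify ${\rm rep}(\CQ,\ugpr\mbox{-}\La)$ with ${\rm rep}(\CQ,k)^{m}$, and invoke Gabriel's theorem. The only difference is that you explicitly verify that $\underline{\Psi}$ reflects zero objects (via the local characterization in Theorem \ref{Gorenproj charecte}) so that the bijection on isomorphism classes restricts to indecomposables; the paper takes this for granted, and your check is a correct and welcome refinement rather than a different argument.
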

\begin{proof}
Assume that $\{G_1, \cdots, G_m\}$ is the complete set of non-projective Gorenstein projective modules, up to isomorphism.
Set $G:=\oplus_{i=1}^mG_i$. Since ${\ad}\mbox{-} G\simeq\gpr\mbox{-}\La$, we have the isomorphism
$\md$-($\ugpr\mbox{-}\La) \simeq \md$-$A$, where $A=\underline{{\End}}_{\La}(\underline{G})$,
the endomorphism algebra of $G$ in the stable category $\ugpr\mbox{-}\La$. Moreover, based on our assumption for $\La$, as mentioned in Remark \ref{dd},
for every indecomposable module $\underline{X}$ and $\underline{Y}$ in $\ugpr \mbox{-} \La$, $\underline{\Hom}_{\La}(\underline{X}, \underline{Y})=0$, and
$\underline{\End}_{\La}(\underline{X})\simeq k$ as an isomorphism algebra. This observation yields that

$$ {\rm rep}(\CQ, \ugpr \mbox{-} \La) \simeq {\rm rep}(\CQ, \prod^m_{i=1}k ) \simeq \prod^m_{i=1} {\rm rep}(\CQ,k ).$$
Thanks to Theorem \ref{main}, there is a bijection between isomorphism classes of indecomposable non-projective Gorenstein projective representations in $\ugpr(\CQ, \La)$   and isomorphism classes of representations in ${\rm rep}(\CQ, \ugpr \mbox{-} \La)$. One the other hand, in view of the above equivalences,  the indecomposable non-projective objects in ${\rm rep}(\CQ, \ugpr \mbox{-} \La)$
 correspond to $m$-tuples $(\CX_1, \cdots, \CX_m)$, where $\CX_i$ is an
indecomposable representation in ${\rm rep}(\CQ, k)$. Furthermore,  by Gabriel’s theorem, there are only finitely many isomorphism
classes of indecomposable representations in ${\rm rep}(\CQ, k)$ if and only if $\CQ$ is Dynkin,  in which case they are in bijection with the positive roots of $\mathcal{Q}$. This completes the proof of the claim.
\end{proof}

\section{The stable  Auslander-Reiten quiver of path algebras over G-semisimple algebras}

In this section, we begin by establishing that the category of Gorenstein representations of the linear quiver $A_n$ over a G-semisimple algebra $\La$, denoted as $\gpr(A_n, \La)$, possesses almost split sequences (see Proposition \ref{Prop. general.functorial}). Subsequently, we present a complete classification of indecomposable Gorenstein projective representations within $\gpr(A_n, \La)$, assuming $\La$ is a G-semisimple algebra (see Proposition \ref{linearquiver}). Additionally, we determine almost split sequences in $\gpr(A_n, \La)$ that have specific ending terms. As an application, we apply this information to obtain insights into the cardinality of the components of the stable Auslander-Reiten quiver $\gpr(A_n, \La)$, as stated in our final theorem.

Let us begin with the following auxiliary lemma.

\begin{lemma}\label{lem. right.prep.Gp}
Let $\CK=(\CK_v, {\CK}_a)_{v \in V, a \in E}$ be a representation in ${\rm rep}(\CQ, \La)$ such that for every $v \in V$, $\CK_v \in (\gpr\mbox{-}\La)^{\perp}$. Then, for every every Gorenstein representation $\CG$, we have $\Ext^1_{\CQ}(\CG, \CK)=0$, i.e., $\CK \in (\gpr(\CQ, \La))^{\perp}.$
\end{lemma}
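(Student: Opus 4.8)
The plan is to use the canonical short exact sequence \eqref{eq. short} for the Gorenstein representation $\CG$, which expresses $\CG$ as a quotient of representations built from the left adjoint functor $e^v_\la$. First I would recall that for an acyclic quiver $\CQ$ we have the exact sequence
$$0 \rt \oplus_{a \in E}e^{t(a)}_{\la}(\CG_{s(a)}) \rt \oplus_{v \in V}e^v_{\la}(\CG_v) \rt \CG\rt 0$$
in $\rm{rep}(\CQ, \La)$. Since each $\CG_v$ is Gorenstein projective (Theorem \ref{Gorenproj charecte}(i)) and $e^v_\la$ sends Gorenstein projective modules to Gorenstein projective representations (as noted just before Proposition \ref{faithful}), both outer terms of this sequence lie in $\gpr(\CQ,\La)$. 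Applying $\Hom_\CQ(-,\CK)$ gives a long exact sequence, and the key point is that $\Ext^1_\CQ$ of the two flanking terms against $\CK$ vanishes; then $\Ext^1_\CQ(\CG,\CK)$ is squeezed between these zeros (one needs $\Ext^2$ or the relevant connecting map, but since the flanking terms are themselves Gorenstein projective and built from adjoints, a direct dimension-shifting argument works).

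The heart of the matter is therefore to show $\Ext^1_\CQ(e^v_\la(G), \CK) = 0$ for every Gorenstein projective $\La$-module $G$ and every vertex $v$. Here I would exploit the adjunction $(e^v_\la, e^v)$: there should be a derived/Ext-level version of the isomorphism \eqref{adjuction-iso}, namely $\Ext^1_\CQ(e^v_\la(G), \CK) \cong \Ext^1_\La(G, \CK_v)$ — this follows because $e^v$ is exact and sends injective resolutions appropriately, or more concretely because $e^v_\la$ is exact and preserves projectives so it is left-derivable trivially and the adjunction passes to Ext. Granting this, since $\CK_v \in (\gpr\mbox{-}\La)^\perp$ by hypothesis and $G \in \gpr\mbox{-}\La$, we get $\Ext^1_\La(G, \CK_v) = 0$, hence $\Ext^1_\CQ(e^v_\la(G), \CK) = 0$ as well. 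The same applies verbatim to the term $e^{t(a)}_\la(\CG_{s(a)})$ since $\CG_{s(a)}$ is Gorenstein projective.

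Putting this together: from the long exact sequence $\Hom_\CQ(-, \CK)$ applied to \eqref{eq. short}, we get
$$\Ext^1_\CQ\big(\oplus_{v}e^v_{\la}(\CG_v), \CK\big) \rt \Ext^1_\CQ(\CG, \CK) \rt \Ext^2_\CQ\big(\oplus_{a}e^{t(a)}_{\la}(\CG_{s(a)}), \CK\big),$$
and I would argue the left term vanishes by the above, while for the right term I can avoid $\Ext^2$ entirely: since the kernel term $\oplus_a e^{t(a)}_\la(\CG_{s(a)})$ is itself a Gorenstein projective representation, the sequence \eqref{eq. short} is a ``$\gpr(\CQ,\La)$-resolution'' and the connecting map $\Ext^1_\CQ(\CG,\CK) \to \Ext^1_\CQ$(kernel term, after dimension shift) suffices; alternatively one observes that \eqref{eq. short} shows $\CG$ has a length-one resolution by objects on which $\Ext^1(-,\CK)$ vanishes, and since these objects have the additional property that $\Ext^{\geq 1}$ against anything in $(\gpr\mbox{-}\La)^\perp$ coordinatewise vanishes, a short diagram chase closes the argument. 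The main obstacle I anticipate is rigorously establishing the Ext-level adjunction isomorphism $\Ext^1_\CQ(e^v_\la(G),\CK) \cong \Ext^1_\La(G,\CK_v)$ and handling the potential $\Ext^2$ term cleanly; the cleanest route is probably to note that $e^v_\la$ being exact and preserving projectives implies $\mathbb{R}\Hom_\CQ(e^v_\la(G), -) \cong \mathbb{R}\Hom_\La(G, e^v(-))$, from which all higher Ext comparisons follow at once.
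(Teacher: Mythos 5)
There is a genuine gap, and it sits exactly where you flagged your unease. With the sequence \eqref{eq. short}, $\CG$ is the \emph{cokernel}, so writing $A=\oplus_{a}e^{t(a)}_{\la}(\CG_{s(a)})$ and $B=\oplus_{v}e^{v}_{\la}(\CG_v)$, the contravariant long exact sequence reads
$$\Hom_{\CQ}(B,\CK)\lrt\Hom_{\CQ}(A,\CK)\st{\delta}\lrt\Ext^1_{\CQ}(\CG,\CK)\lrt\Ext^1_{\CQ}(B,\CK),$$
not the segment $\Ext^1_{\CQ}(B,\CK)\rt\Ext^1_{\CQ}(\CG,\CK)\rt\Ext^2_{\CQ}(A,\CK)$ you wrote (that would be correct if $\CG$ were the subobject). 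Your adjunction isomorphism $\Ext^1_{\CQ}(e^v_{\la}(G),\CK)\cong\Ext^1_{\La}(G,\CK_v)$ is perfectly fine ($e^v_{\la}$ is exact and preserves projectives), and it kills $\Ext^1_{\CQ}(B,\CK)$ and $\Ext^1_{\CQ}(A,\CK)$; but that only yields $\Ext^1_{\CQ}(\CG,\CK)\cong\mathrm{coker}\big(\Hom_{\CQ}(B,\CK)\rt\Hom_{\CQ}(A,\CK)\big)$, and the vanishing of that cokernel is essentially the statement being proved. The gap is visible from the fact that your argument never uses condition $(ii)$ of Theorem \ref{Gorenproj charecte} (that each $\CG^v$ is a monomorphism with Gorenstein projective cokernel), only that the vertices $\CG_v$ are Gorenstein projective. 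That cannot suffice: take $\La=k[x]/(x^2)$, $\CQ=A_2$, $\CG=(k\st{0}\rt k)$ and $\CK=(0\rt\La)$. All vertices of $\CG$ are Gorenstein projective, both outer terms of \eqref{eq. short} are Gorenstein projective representations, and $\CK_v\in(\gpr\mbox{-}\La)^{\perp}$, yet $\Ext^1_{\CQ}(\CG,\CK)\cong k\neq 0$. So the reasoning as written would prove a false statement; the monicity condition must enter. (It enters precisely in showing $\delta=0$: extending a map $\oplus_{t(a)=v}\CG_{s(a)}\rt\CK_v$ along the monomorphism $\CG^v$ is obstructed by $\Ext^1_{\La}(\mathsf{cok}(\CG^v),\CK_v)$, which vanishes because $\mathsf{cok}(\CG^v)$ is Gorenstein projective.)

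The paper avoids this issue by a different decomposition: it inducts on the number of vertices and uses the short exact sequence $0\rt e^{\CQ^{n-1}}_{\la}e^{\CQ^{n-1}}(\CG)\rt\CG\rt\oplus_{v\in V_n}e^v_{\la}(\CG_v)\rt 0$, in which $\CG$ is the \emph{middle} term; then $\Ext^1_{\CQ}(\CG,\CK)$ is genuinely sandwiched between two $\Ext^1$ groups, one killed by the vertexwise adjunction (as in your argument) and the other by the induction hypothesis via the adjunction for the subquiver restriction. If you want to keep your two-term resolution, you must additionally prove surjectivity of $\Hom_{\CQ}(B,\CK)\rt\Hom_{\CQ}(A,\CK)$ by processing vertices in topological order and using the lifting argument just described, which in effect reproduces the paper's induction.
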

\begin{proof}
We proceed by induction on $V(\CQ)$. For the case $V(\CQ) = 1$, it is plainly true. Assume that the result holds for any quiver with $V(\CQ) < n$. Let $\CQ$ be a quiver with $V(\CQ) = n > 1$, and let $\CK$ be a representation such that for every $v \in V$, $\CK_v \in (\gpr\mbox{-}\La)^\perp$. Consider the subquiver $\CQ^{n-1}$ of $\CQ$. Assume that $\CG$ is a representation in $\gpr(\CQ, \La)$. By \cite{EH}, the restriction functor $e^{\CQ^{n-1}}: \text{rep}(\CQ, \La) \rightarrow \text{rep}(\CQ^{n-1}, \La)$ has left and right adjoints. According to the explicit formulations of the adjoints provided in \cite{AEHS}, we observe the following short exact sequence in $\gpr(\CQ, \La)$:
$$ 0 \rightarrow e^{\CQ^{n-1}}_{\la}e^{\CQ^{n-1}}(\CG) \rightarrow \CG \rightarrow \bigoplus_{v \in V_n} e^v_{\la}(\CG_v) \rightarrow 0. $$
Then, $\Ext^1_{\CQ}(e^{\CQ^{n-1}}_{\la}e^{\CQ^{n-1}}(\CG), \CK) \simeq \Ext^1_{\CQ^{n-1}}(e^{\CQ^{n-1}}(\CG), e^{\CQ^{n-1}}(\CK)) = 0$, due to our induction hypothesis. Moreover, $\Ext^1_{\CQ}(\bigoplus_{v \in V_n} e^v_{\la}(\CG_v), \CK) \simeq \bigoplus_{v \in V_n} \Ext^1_{\La}(\CG_v, \CK_v) = 0$. Now, by the long exact sequence associated with the above sequence, we get $\Ext^1_{\CQ}(\CG, \CK) = 0$, as required.
\end{proof}

\begin{lemma}\label{lemma, funct. A2}
Let $\gpr\mbox{-}\La$ be contravariantly finite in $\md\mbox{-}\La$. Then $\CS(\gpr\mbox{-}\La)$ is functorially finite in $\rm{H}(\La).$ In particular, $\CS(\gpr\mbox{-}\La)$ has almost split sequences.
\end{lemma}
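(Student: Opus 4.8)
The plan is to show that $\CS(\gpr\mbox{-}\La)$ is both contravariantly and covariantly finite in $\HT(\La)$; the existence of almost split sequences then follows from the general theory of Auslander--Reiten sequences in functorially finite extension-closed subcategories of an abelian (more generally, exact) category, as recorded in \cite{AS} (see also \cite{Li, Kr}). Since $\CS(\gpr\mbox{-}\La)$ is closed under extensions in $\HT(\La)$ (as noted in the preliminaries), it is an exact category, and by \cite{AS} a functorially finite extension-closed subcategory with the requisite finiteness of morphism spaces has right and left almost split sequences.

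First I would establish contravariant finiteness. Let $(A\st{f}\rt B)$ be an arbitrary object of $\HT(\La)$. Using that $\gpr\mbox{-}\La$ is contravariantly finite in $\md\mbox{-}\La$, choose a right $\gpr\mbox{-}\La$-approximation $G_B\rt B$; then form the pullback of $A\st{f}\rt B\ot G_B$ to get $G_A\rt A$ together with a map $G_A\rt G_B$. One checks $G_A\rt A$ is again a right $\gpr\mbox{-}\La$-approximation by using that $\gpr\mbox{-}\La$ is resolving — in particular closed under kernels of epimorphisms between its objects — so that the relevant lifting across the pullback square stays inside $\gpr\mbox{-}\La$. The resulting morphism $(G_A\rt G_B)$ need not be a monomorphism, but one can correct this: replace $(G_A\rt G_B)$ by $(G_A\st{\binom{\iota}{g}}\rt G_A\oplus G_B)$ where $G_A\st{\iota}\rt G_A$ is the identity, giving an honest object of $\CS(\La)$; and to land in $\CS(\gpr\mbox{-}\La)$ one further adds, if necessary, a term $(\Omega_\La(C)\rt P)$ or $(0\rt G)$ type object so that the cokernel becomes Gorenstein projective, exploiting again that $\gpr\mbox{-}\La$ is resolving (an extension of a Gorenstein projective by the cokernel we have can be split off). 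A cleaner route, which I would actually follow, is to recall the standard fact (e.g. from the literature on monomorphism categories over resolving subcategories, cf. \cite{LZh2, XZ}) that $\CS(\gpr\mbox{-}\La) = \gpr\mbox{-}T_2(\La)$ is itself resolving in $\HT(\La)=\md\mbox{-}T_2(\La)$, hence contravariantly finite there by \cite{KS} as soon as one knows it is resolving and $T_2(\La)$ admits the relevant approximations — and contravariant finiteness of $\gpr\mbox{-}T_2(\La)$ in $\md\mbox{-}T_2(\La)$ follows from contravariant finiteness of $\gpr\mbox{-}\La$ in $\md\mbox{-}\La$ by the componentwise construction just sketched.

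For covariant finiteness, I would dualize using the duality $(-)^*\colon\gpr\mbox{-}\La\rt\gpr\mbox{-}\La^{\op}$ together with the standard duality $\HT(\La)^{\op}\simeq\HT(\La^{\op})$ (equivalently $T_2(\La)^{\op}$-modules), which carries $\CS(\gpr\mbox{-}\La)$ to the analogous subcategory over $\La^{\op}$; contravariant finiteness on the $\La^{\op}$ side, proved exactly as above (note $\gpr\mbox{-}\La^{\op}$ is contravariantly finite in $\md\mbox{-}\La^{\op}$ since $\gpr\mbox{-}\La$ is contravariantly finite in $\md\mbox{-}\La$ — this is where the hypothesis is really used on both sides), then yields covariant finiteness of $\CS(\gpr\mbox{-}\La)$ in $\HT(\La)$. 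Having both, $\CS(\gpr\mbox{-}\La)$ is functorially finite in $\HT(\La)$, and the existence of almost split sequences is then immediate from \cite[Theorem]{AS} applied to this functorially finite, extension-closed (hence exact) subcategory, noting that $\HT(\La)\simeq\md\mbox{-}T_2(\La)$ is a Hom-finite abelian category over the Artinian base so all endomorphism rings are semiperfect.

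The main obstacle I anticipate is the bookkeeping in the approximation construction: ensuring that the componentwise right $\gpr\mbox{-}\La$-approximation can be arranged to be a \emph{monomorphism with Gorenstein projective cokernel}, i.e. that it genuinely lands in $\CS(\gpr\mbox{-}\La)$ and remains an approximation there. The key technical input making this work is that $\gpr\mbox{-}\La$ is resolving (closed under extensions and kernels of epimorphisms), which lets one both add split-type summands to enforce injectivity of the structure map and absorb the cokernel discrepancy into a Gorenstein projective term without destroying the approximation property.
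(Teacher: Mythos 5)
Your overall frame (contravariant plus covariant finiteness, then Auslander--Smal{\o}) is reasonable, but the core of the lemma --- actually producing a right $\CS(\gpr\mbox{-}\La)$-approximation --- is not established, and the construction you sketch would fail. First, the pullback of $A\st{f}\rt B\leftarrow G_B$ need not lie in $\gpr\mbox{-}\La$: it is an extension of $A$ by $\ker(G_B\rt B)$, and $A$ is arbitrary, so calling it $G_A$ and treating it as a right $\gpr\mbox{-}\La$-approximation of $A$ is unjustified. Second, and more seriously, even if one has honest componentwise right approximations $\alpha\colon G_A\rt A$ and $\beta\colon G_B\rt B$ assembled into an object $(G_A\st{g}\rt G)$ of $\CS(\gpr\mbox{-}\La)$, the lifting problem in the morphism category is not componentwise: given a morphism $(H_1\st{v}\rt H_2)\rt(A\st{f}\rt B)$ one needs lifts $l_1\colon H_1\rt G_A$ and $l_2\colon H_2\rt G$ satisfying the compatibility $l_2v=g\,l_1$, and lifting separately through $\alpha$ and $\beta$ gives no control over this. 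Your ``cleaner route'' does not repair the gap: \cite[Corollary 0.3]{KS} upgrades a resolving \emph{contravariantly finite} subcategory to a functorially finite one; it cannot produce contravariant finiteness from resolvingness alone, so invoking it ``as soon as $T_2(\La)$ admits the relevant approximations'' is circular --- those approximations are exactly what must be constructed.

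The paper supplies precisely the missing mechanism. It first reduces, via \cite[Corollary 0.3]{KS} and the known contravariant finiteness of $\CS(\La)$ in $\HT(\La)$, to approximating objects of $\CS(\La)$; it then builds an epimorphism $(G_3\rt G_2)\rt(M_1\st{f}\rt M_2)$ by taking a \emph{minimal} right $\gpr\mbox{-}\La$-approximation of $\mathsf{cok}f$, applying Wakamatsu's Lemma to place the resulting kernels in $(\gpr\mbox{-}\La)^{\perp}$, and iterating with pullbacks; finally it verifies the approximation property not by componentwise lifting but by observing that the kernel of this epimorphism has all components in $(\gpr\mbox{-}\La)^{\perp}$, so that Lemma \ref{lem. right.prep.Gp} gives $\Ext^1$-vanishing against every object of $\CS(\gpr\mbox{-}\La)$ and hence every map lifts. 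This $\Ext^1$-vanishing device is what circumvents the compatibility problem, and it is absent from your proposal. (Your covariant-finiteness step is also shakier than needed: $(-)^*$ is a duality on $\gpr$, not on $\md$, so contravariant finiteness of $\gpr\mbox{-}\La^{\op}$ does not follow formally from that of $\gpr\mbox{-}\La$; in any case covariant finiteness comes for free from \cite{KS} once contravariant finiteness is in hand, which is the route the paper takes.)
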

\begin{proof}
Since $\CS(\gpr\mbox{-}\La)$ is a resolving subcategory of $\rm{H}(\La)$, by \cite[Corollary 0.3]{KS} it suffices to show that $\CS(\gpr\mbox{-}\La)$ is contravariantly finite in $\rm{H}(\La)$. It follows from \cite[Theorem 2.5]{RS2} (or \cite[Theorem 3.1]{LuZ}) that $\CS(\La)$ is contravariantly finite in $\rm{H}(\La)$. Thus, it is enough to show that any object in $\CS(\La)$ has a right $\CS(\gpr\mbox{-}\La)$-approximation. Assume that $(M_1\st{f}\rt M_2)$ is an arbitrary object in $\CS(\La)$. Take a minimal right $\gpr\mbox{-}\La$-approximation $G_1 \stackrel{g}\rightarrow \mathsf{cok}f\rightarrow 0$ of $\mathsf{cok}f$ in $\md\mbox{-}\La$, which exists by assumption. Set $K_1:=\mathsf{ker}g$. Since $g$ is minimal, Wakamutsu's Lemma yields that $K_1 \in \gpr\mbox{-}\La^{\perp}$, i.e., $\Ext^1(G, K_1)=0$ for any $G$ in $\gpr\mbox{-}\La$.
Consider the following pull-back diagram:
$${\xymatrix{& & 0\ar[d] & 0\ar[d] & &\\
& & M_1 \ar[d]^h
\ar@{=}[r] & M_1 \ar[d]^{f} \ar[r] & 0\\
0 \ar[r] & K_1\ar@{=}[d] \ar[r] & U
\ar[d]^l \ar[r]^{d} & M_2 \ar[d]^{\pi} \ar[r] & 0\\
0 \ar[r] & K_1 \ar[d] \ar[r]^{\mu_1} & G_1
\ar[d] \ar[r]^{g} & \mathsf{cok}f\ar[d] \ar[r] & 0\\
& 0 & 0 & 0 & }}$$
Let $G_2 \st{t}\rt U \rt 0$ be a minimal right $\gpr\mbox{-}\La$-approximation. Again by applying Wakamutsu's Lemma, $K_2:= \mathsf{ker}t$ belongs to $\gpr\mbox{-}\La^{\perp}$. Now consider the following pull-back diagram:
$$\xymatrix{& & 0 \ar[d] & 0 \ar[d]& &\\
0 \ar[r] & K_2 \ar@{=}[d] \ar[r] & K_3 \ar[d]
\ar[r] &K_1\ar[d] \ar[r] & 0\\
0 \ar[r] & K_2 \ar[r] & G_2
\ar[d]^{dt} \ar[r]^{t} & U\ar[d]^d \ar[r] & 0\\
& & M_2
\ar[d] \ar@{=}[r] & M_2 \ar[d] & \\
& & 0 & 0 & }	$$
Since both $K_1, K_2$ are in $\gpr\mbox{-}\La^{\perp}$, the same is true for $K_3$. Finally, applying the Snake lemma to the commutative diagram
$$\xymatrix{
0 \ar[r] &0 \ar[d] \ar[r] & G_2 \ar[d]^t
\ar@{=}[r] &G_2 \ar[d]^{lt}\ar[r] & 0&\\
0 \ar[r] & M_1 \ar[r]^{h} & U
\ar[r]^l & G_1\ar[r] &0& \\ } $$
gives the short exact sequence $0 \rt K_2 \rt G_3\rt M_1\rt 0, $
where $G_3:= \mathsf{ker}lt$, which evidently lies in $\gpr\mbox{-}\La$.
Putting together the maps obtained in the above, one may construct the following commutative diagram:
$$\xymatrix{& 0 \ar[d] & 0 \ar[d] & 0 \ar[d]& &\\
0 \ar[r] & K_2 \ar[d]^v \ar[r] & G_3 \ar[d]^w
\ar[r]^p & M_1 \ar[d]^{f} \ar[r] & 0\\
0 \ar[r] & K_3\ar[d]
\ar[r] & G_2
\ar[d]^{lt} \ar[r]^{dt} & M_2 \ar[d]^{\pi} \ar[r] & 0\\
0 \ar[r] & K_1 \ar[d] \ar[r] & G_1
\ar[d] \ar[r]^{g} & \mathsf{cok}f \ar[d] \ar[r] & 0\\
& 0 & 0 & 0 & }$$
{This, in particular, gives us the short exact sequence }
{$$\la:0\lrt(K_2\st{v}\rt K_3)\lrt (G_3\st{w}\rt G_2)\st{(p~~dt)}\lrt(M_1\st{f}\rt M_2)\lrt 0,$$} in $\rm{H}(\La)$	such that the middle term lies in $\CS(\gpr\mbox{-}\La)$ and $K_2, K_3$ are in $(\gpr\mbox{-}\La)^{\perp}$. Thus, in view of Lemma \ref{lem. right.prep.Gp}, the representation $(K_1\st{v}\rt K_2)$ is in $\CS(\gpr\mbox{-}\La)^{\perp}$. This implies that the epimorphism sitting in the short exact sequence $\la$ acts a right $\gpr\mbox{-}\La$-approximation of $(M_1\st{f} \rt M_2)$. So the proof is complete.
\end{proof}

In the sequel, we aim to extend the above result to the linear quiver $$A_n:v_1\st{a_1}\rightarrow \cdots \st{a_{n-1}}\rightarrow v_n,$$ where $n \geq 2.$ One should note that for the case $n=2$, $\gpr(A_2, \La)$ is the same as $\CS(\gpr\mbox{-}\La)$.

We recall that a representation $\CX=(\CX_v, \CX_a)_{v \in V, a \in E}$ of $\CQ$ over $\La$ is a {\em monic representation}, if for each $v \in V$, the $\La$-homorphism $\CX^v:\oplus_{t(a)=v}\CX_{s(a)} \rt \CX_v$ is injective (see \cite[Definition 2.2]{LuZ}). Denote by ${\mon}(\CQ, \La)$ the full subcategory of ${\rm rep}(\CQ, \La)$ consisting of monic representations of $\CQ$ over $\La$. It was shown in \cite[Theorem 3.1]{LuZ} that ${\mon}(\CQ, \La)$ is functorially finite in ${\rm rep}(\CQ, \La).$

\begin{proposition}\label{Prop. general.functorial}
Let $\gpr\mbox{-}\La$ be contravariantly finite in $\md\mbox{-}\La$. Then $\gpr(A_n, \La)$ is functorially finite in ${\rm rep}(\CQ, \La).$ In particular, $\gpr(A_n, \La)$ has almost split sequences.
\end{proposition}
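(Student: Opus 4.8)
The plan is to reduce the statement for $A_n$ to the already-established case $n=2$ (Lemma \ref{lemma, funct. A2}) by an induction on $n$, exploiting the fact that $A_n$ is obtained from $A_{n-1}$ by attaching one source vertex. Since $\gpr(A_n,\La)$ is a resolving subcategory of $\mathrm{rep}(A_n,\La)$ (it is closed under extensions and kernels of epimorphisms and contains the projective representations), by \cite[Corollary 0.3]{KS} it suffices to prove that $\gpr(A_n,\La)$ is contravariantly finite in $\mathrm{rep}(A_n,\La)$; functorial finiteness and the existence of almost split sequences then follow formally, exactly as in the proof of Lemma \ref{lemma, funct. A2}. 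By \cite[Theorem 3.1]{LuZ} the monic subcategory $\mon(A_n,\La)$ is functorially finite in $\mathrm{rep}(A_n,\La)$, so by a standard composition-of-approximations argument it is enough to show that every object of $\mon(A_n,\La)$ admits a right $\gpr(A_n,\La)$-approximation.

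First I would set up the inductive step. Write $A_n: v_1\st{a_1}\rt v_2\st{a_2}\rt\cdots\st{a_{n-1}}\rt v_n$ and regard $A_{n-1}$ as the subquiver on $v_2,\dots,v_n$ (so $v_1$ is the unique new source). A representation $\CX$ of $A_n$ is the same datum as a representation $\CX'$ of $A_{n-1}$ together with a map $\CX_{a_1}:\CX_{v_1}\rt \CX_{v_2}$, and $\CX\in\gpr(A_n,\La)$ precisely when $\CX'\in\gpr(A_{n-1},\La)$, $\CX_{v_1}\in\gpr\mbox{-}\La$, and $\CX_{a_1}$ is a monomorphism with Gorenstein projective cokernel (Theorem \ref{Gorenproj charecte}); the condition at vertices $v_3,\dots,v_n$ is unchanged and at $v_2$ it mixes the old incoming map with $\CX_{a_1}$. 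Given an object $\CX=(X_1\st{f}\rt X_2\st{\cdots}\rt X_n)$ in $\mon(A_n,\La)$, the inductive hypothesis supplies a right $\gpr(A_{n-1},\La)$-approximation of the truncated representation $(X_2\rt\cdots\rt X_n)$, with kernel lying in $(\gpr\mbox{-}\La)^{\perp}$ vertexwise (using Wakamatsu's Lemma for the minimal version), hence in $(\gpr(A_{n-1},\La))^{\perp}$ by Lemma \ref{lem. right.prep.Gp}. Then, mimicking the two-vertex construction in Lemma \ref{lemma, funct. A2}, I would perform a sequence of pullbacks: lift the approximation at $v_2$ against $f:X_1\rt X_2$, take a minimal right $\gpr\mbox{-}\La$-approximation of the resulting middle object to handle the new source vertex $v_1$, and assemble these into a short exact sequence $0\rt\CK\rt\CH\rt\CX\rt 0$ in $\mathrm{rep}(A_n,\La)$ with $\CH\in\gpr(A_n,\La)$ and $\CK$ vertexwise in $(\gpr\mbox{-}\La)^{\perp}$. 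Lemma \ref{lem. right.prep.Gp} then gives $\CK\in(\gpr(A_n,\La))^{\perp}$, so $\CH\rt\CX$ is the desired right $\gpr(A_n,\La)$-approximation, completing the induction.

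The main obstacle I anticipate is the bookkeeping at the shared vertex $v_2$: the approximation produced by the inductive hypothesis for $A_{n-1}$ must be compatible with the map $f:X_1\rt X_2$ in a way that, after the pullback along $f$ and the further $\gpr\mbox{-}\La$-approximation at $v_1$, still yields a representation all of whose structure maps $\CH^v$ are monomorphisms with Gorenstein projective cokernels. Concretely one must check that the pullback does not destroy the monic-with-Gorenstein-projective-cokernel property already secured at $v_3,\dots,v_n$ and that the newly formed map at $v_2$ inherits it; this is exactly the kind of ``horseshoe''/snake-lemma diagram chase carried out in Lemma \ref{lemma, funct. A2}, just propagated one vertex further, and the fact that $A_n$ is linear (each non-source vertex receives exactly one arrow) keeps the combinatorics manageable. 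A cleaner alternative, which I would also keep in mind, is to invoke \cite[Theorem 3.1]{LuZ} directly for $\mon(A_n,\La)$ and then only argue that objects of $\mon(A_n,\La)$ have right $\gpr(A_n,\La)$-approximations by the vertex-by-vertex pullback procedure above, sidestepping a separate induction on the quiver; either way the engine is Wakamatsu's Lemma together with Lemma \ref{lem. right.prep.Gp}.
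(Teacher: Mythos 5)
Your proposal follows essentially the same route as the paper: reduce via \cite[Corollary 0.3]{KS} and \cite[Theorem 3.1]{LuZ} to producing a right $\gpr(A_n, \La)$-approximation of each monic representation, and construct it vertex-by-vertex by induction, using the two-vertex case (Lemma \ref{lemma, funct. A2}) together with Wakamatsu's Lemma and Lemma \ref{lem. right.prep.Gp} to see that the kernel lies in $(\gpr(A_n,\La))^{\perp}$. The only real difference is that you attach the new vertex at the source whereas the paper extends toward the sink, and the gluing at the shared vertex that you flag as the main obstacle is resolved in the paper exactly as you anticipate: the approximation there splits as a minimal right $\gpr\mbox{-}\La$-approximation plus a projective summand mapping to zero, and one pads the previously constructed sequence with a split exact sequence on that projective to make the two constructions agree.
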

\begin{proof}
Similar to the argument given at the beginning of the proof of Lemma \ref{lemma, funct. A2}, it is enough to show that any object in ${\mon}(A_n, \La)$ has a right $\gpr(A_n, \La)$-approximation. Take an arbitrary object $\CM=(\CM_{v_i}, \CM_{a_i})$ in ${\mon}(A_n, \La)$.
By using the induction on $i$, one may construct the short exact sequence in ${\rm rep}(A_n, \La)$, or in ${\mon}(A_n, \La)$,

$$(*) \ \ \ 0 \rt \CK \st{f}\rt \CG \st{g} \rt \CM \rt 0,$$
where $\CG=(\CG_{v_i}, \CG_{a_i})$ is a representation in $\gpr(A_n, \La)$, and $\CK=(\CK_{v_i}, \CK_{a_i})$ with $\CK_v \in (\gpr\mbox{-}\La)^{\perp}$ for each $1 \leq i \leq n$. By Lemma \ref{lem. right.prep.Gp}, the representation $\CK$ lies in $(\gpr(A_n, \La))^{\perp}$. This implies that the morphism $g: \CG \rt \CM$ is a right $\gpr(A_n, \La)$-approximation. For the case $i=2$, it holds, see the short exact sequence $\la$ in the proof of Lemma \ref{lemma, funct. A2}. Assume that we have construct the short exact sequence $(*)$ for every $v_j, j\in \{1, 2,\cdots, i\}$. More precisely, the representations $\CK, \CG$ and morphism $f, g $ have been defined for subquiver $A_i$ which satisfy the following conditions: for every $v_j, j \in \{1,2, \cdots, i\} $
\begin{itemize}

\item $0 \rt \CK_{v_j} \st{f_{v_j}}\rt \CG_{v_j} \st {g_{v_j}}\rt \CM_{v_j} \rt 0$ is a short exact sequence;
\item if $j<i$, then $\CG_{a_j}$ is a monomorphism, and $\mathsf{cok}(\CG_{a_j})$ is Gorenstein projective;
\item $\CG_{v_j}$ is Gorenstein projective, and $\CK_{v_j} \in (\gpr\mbox{-}\La)^{\perp}.$
\end{itemize}
We can write $g_{v_i}:\CG_{v_i}\rt \CM_{v_i}$ as $g_{v_i}=[q~~0]:\CG_{v_i}=X\oplus P\rt \CM_{v_i} $, where $q:X\rt \CM_v$ is a right minimal $\gpr\mbox{-}\La$-approximation. Since $P \in\gpr\mbox{-}\La \cap ({\gpr}\mbox{-}\La)^{\perp} $, one may deduce that $P$ is a projective module. Applying Lemma \ref{lemma, funct. A2} to the representation $(\CM_{v_i}\st{\CM_{a_{v_i}}}\rt \CM_{v_{i+1}})$ of the quiver $v_i\st{a_i}\rt v_{i+1}$, gives the short exact sequence

$$\delta: 0\lrt(K_1\st{v}\rt K_2)\lrt (G_1\st{w}\rt G_2)\st{(p_1~~p_2)}\lrt (\CM_{v_i}\st{\CM_{a_i}}\rt\CM_{v_{i+1}})\lrt 0,$$
where the middle term lies in $\CS(\gpr\mbox{-}\La)$ and $K_1, K_2$ are in $(\gpr\mbox{-}\La)^{\perp}$. Without loss of generality, we can assume that $X$ is a direct summand of $G_1$. This leads us to write $p_1$ as $p_1=[q~~0]:G_1=X\oplus Q\rt \CM_{v_i}$, for some projective module $Q$. Now, let us make a slight change to our record for the vertex $v_i$, by adding the split short exact sequence $0 \rightarrow Q \xrightarrow{1} Q \rightarrow 0 \rightarrow 0$ to the sequence $0 \rightarrow \CK_{v_i} \xrightarrow{f_{v_i}} \CG_{v_i} \xrightarrow{g_{v_i}} \CM_{v_i} \rightarrow 0$. Assume that $\CG_{v_{i+1}} = G_2 \oplus P$. In the diagram below, we visualize the relevant definitions in the neighborhood of the vertex $v_i$:

$$\xymatrix@C=0.7cm@R=0.7cm{0\ar[r]&
\CK_{v_{i-1}}\ar[d]_-{\left[\begin{smallmatrix} \CK_{a_{i-1}}\\ 0\end{smallmatrix}\right]}\ar[rr]^-{f_{v_{i-1}}}
&& \CG_{v_{i-1}}
\ar[d]_-{\left[\begin{smallmatrix} \CG_{a_{i-1}}\\ 0\end{smallmatrix}\right]}\ar[rr]^-{g_{v_{i-1}}}
&& \CM_{v_{i-1}}\ar[r]\ar[d]^-{\CM_{a_{i-1}}} & 0\\
0\ar[r]&
\CK_{v_i}\oplus Q\ar[d]_-{\CK_{a_i}}\ar[rr]^-{\left[\begin{smallmatrix} f_{v_i} & 0\\ 0 & 1\end{smallmatrix}\right]}
&& \CG_{v_i}\oplus Q=X\oplus P\oplus Q
\ar[d]^-{\left[\begin{smallmatrix}w|&0&w|\\0&1&
0\end{smallmatrix}\right]}
\ar[rr]^-{[g_{v_i}~~0~~0]}
&& \CM_{v_i}\ar[r]\ar[d]^-{\CM_{a_i}} & 0\\
0\ar[r]&
\CK_{v_{i+1}}\ar[rr]^-{f_{v_{i+1}}}
&& \CG_{v_{i+1}}=G_2\oplus P
\ar[rr]^-{[p_2~~0]}
&& \CM_{v_{i+1}}\ar[r] & 0,}$$
According to the diagram, we have defined $\CG_{a_i}$ as the morphism in the middle column, which is a monomorphism with a Gorenstein projective cokernel. We have also defined $\CK_{v_{i+1}}$ and $\CK_{a_{i}}$ as the module and the morphism, respectively, satisfying the commutativity of the leftmost square. So the proof is finished.
\end{proof}
\begin{remark}
We would like to emphasize that one may be able to extend the constructive method given in Proposition \ref{Prop. general.functorial} for finding a right $\gpr(A_n, \La)$-approximation for an object in ${\mon}(\CQ, \La)$ to finite acyclic quivers. But since the remainder of the paper deals with linear quivers and also to simplify the technicalities, we deal only with linear quivers.
\end{remark}

\subsection{The case: quiver $A_2:v_1 \rt v_2$}
We first specialize to G-semisimple algebras the results of \cite[Section 5]{HZ} concerning the components of the stable Auslander-Reiten quiver of $\CS(\gpr\mbox{-}\La)$ containing the boundary vertex $(0 \rt G)$. We then generalize to the case that the quiver is a linear quiver with $n$ vertices.

\begin{lemma}\label{AlmostSplittrivialmonomorphisms}
Let $\La$ be a ${\rm G}$-semisimple algebra. Let $G$ be a non-projective Gorenstein-projective module and $0 \rt \Omega_{\La}(G) \st{g} \rt P_G\st{f} \rt G \rt 0$ be an almost split sequence in $\gpr\mbox{-}\La$. Then
\begin{itemize}
\item[$(1)$] The almost split sequence in $\CS(\gpr\mbox{-}\La)$ ending at $(0 \rt G)$ is of the form
$$0\lrt(\Omega_{\La}(G)\st{1}\rt \Omega_{\La}(G))\st{(1~~g)}\lrt(\Omega_{\La}(G)\st{g}\rt P_G)\st{(0~~f)}\lrt(0\rt G)\lrt 0.$$

\item [$(2)$] The almost split sequence in $\CS(\gpr\mbox{-}\La)$ ending at $(G\st{1}\rt G)$ is of the form
$$0\lrt(\Omega_{\La}(G)\st{g}\rt P_G)\lrt(P_G\st{1}\rt P_G)\oplus(0\rt G)\lrt(G\st{1}\rt G)\lrt 0.$$

\item [$(3)$] The almost split sequence in $\CS(\gpr\mbox{-}\La)$ ending at $(\Omega_{\La}(G)\st{g}\rt P_G)$ is of the form	$$0\lrt(0\rt\Omega_{\La}(G))\lrt(0\rt P_G)\oplus(\Omega_{\La}(G)\st{1}\rt \Omega_{\La}(G))\lrt(\Omega_{\La}(G)\st{g}\rt P_G)\lrt 0.$$
\end{itemize}
\end{lemma}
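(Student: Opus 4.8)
The plan is to compute each of the three almost split sequences in $\CS(\gpr\mbox{-}\La)$ directly, using the functor $\Psi_{\gpr\mbox{-}\La}$ together with the structural description of objects of $\CS(\gpr\mbox{-}\La)$ from Proposition~\ref{pro11} and the list of short exact sequences in $\gpr\mbox{-}\La$ from Lemma~\ref{Lemma1}. The key principle is that, since $\La$ is G-semisimple, $\gpr\mbox{-}\La$ has (by Proposition~\ref{CM-finite} and Proposition~\ref{Prop. general.functorial} in the case $n=2$) almost split sequences, and by Proposition~\ref{pro11} every indecomposable object of $\CS(\gpr\mbox{-}\La)$ is one of $(0\rt G)$, $(G\st{1}\rt G)$, $(\Omega_\La G\st{i}\rt P)$, or a trivial object coming from a projective. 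So the strategy in each case is: write down a candidate short exact sequence in $\mathrm{H}(\La)$ whose terms lie in $\CS(\gpr\mbox{-}\La)$, verify it is a conflation, and then check that the right-hand map is right almost split and the left-hand map is left almost split.

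\textbf{For part (1):} I would take the candidate sequence as written and first check exactness in $\mathrm{H}(\La)$ componentwise—this is routine since the top row is $0\rt\Omega_\La(G)\st{1}\rt\Omega_\La(G)\rt 0\rt 0$ and the bottom row is the given almost split sequence $0\rt\Omega_\La(G)\st{g}\rt P_G\st{f}\rt G\rt 0$. All three terms are in $\CS(\gpr\mbox{-}\La)$: the first two are of the allowed forms, and the third is $(0\rt G)$. To see right almost splitness of $(0~~f)\colon(\Omega_\La G\st{g}\rt P_G)\rt(0\rt G)$: given a non-split epimorphism $(X\st{u}\rt Y)\rt(0\rt G)$ in $\CS(\gpr\mbox{-}\La)$, the only datum is a map $h\colon Y\rt G$ in $\gpr\mbox{-}\La$ which must be non-split (otherwise $(0\rt G)$ is a summand of $(X\st{u}\rt Y)$ and the epimorphism splits); then $h$ factors through $f$ by right almost splitness of $f$ in $\gpr\mbox{-}\La$, and one assembles the factorization through $(0~~f)$, using that $X\st{u}\rt Y$ with the lift $Y\rt P_G$ gives a map to $(\Omega_\La G\st{g}\rt P_G)$ (the component on $X$ is forced). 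Left almost splitness of $(1~~g)$ follows dually, or—more economically—one invokes the general fact that a short exact sequence whose ending map is right almost split and which is non-split and has indecomposable ending term is automatically an almost split sequence, checking that $(\Omega_\La G\st{1}\rt\Omega_\La G)$ has local endomorphism ring. Alternatively, and this is likely cleanest, I would apply $\Psi_{\gpr\mbox{-}\La}$ to the candidate and use \cite[Theorem 3.2]{H}: $\Psi$ sends it to the short exact sequence $0\rt 0\rt(-,\underline G)\rt(-,\underline G)$ or something degenerate in $\md\mbox{-}(\ugpr\mbox{-}\La)$; but since that category is semisimple, one must argue more carefully—so I lean on the direct almost-split verification.

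\textbf{For parts (2) and (3):} The object $(G\st{1}\rt G)$ is indecomposable non-projective (when $G$ is), and $(\Omega_\La G\st{g}\rt P_G)$ likewise. For (2), the candidate middle term is $(P_G\st{1}\rt P_G)\oplus(0\rt G)$; I would check exactness of $0\rt(\Omega_\La G\st{g}\rt P_G)\rt(P_G\st{1}\rt P_G)\oplus(0\rt G)\rt(G\st{1}\rt G)\rt 0$ componentwise—on the first coordinate this is $0\rt\Omega_\La G\st{g}\rt P_G\rt G\rt 0$, on the second $0\rt P_G\st{1}\rt P_G\oplus 0\rt 0\rt 0$... one has to be careful to set up the maps so the squares commute; the map into the middle is $\bigl(\begin{smallmatrix}g\\ \ast\end{smallmatrix}\bigr)$-type and out is $(f~~\ast)$-type, and it works because $(P_G\st 1\rt P_G)$ is a projective object of $\CS(\gpr\mbox{-}\La)$ serving as the "projective cover correction." Then I verify right almost splitness of the ending map at $(G\st 1\rt G)$ directly: a non-split epimorphism onto $(G\st 1\rt G)$ amounts to data whose $G$-components fail to split, and using right almost splitness of $f$ and the fact that $(P_G\st 1\rt P_G)$ is projective-injective in $\CS(\gpr\mbox{-}\La)$ one factors through. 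Part (3) is then the dual/mirror statement, and I would deduce it from (1) or (2) via the duality $(-)^*\colon\gpr\mbox{-}\La\rt\gpr\mbox{-}\La^{\rm op}$ (Proposition~\ref{Symmetry}) together with the observation that $(\Omega_\La G\st g\rt P_G)$ is, up to this duality, of the form $(0\rt G')$ or $(G'\st 1\rt G')$ for an appropriate $G'$ over $\La^{\rm op}$; or, if that bookkeeping is awkward, verify it directly by the same method.

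\textbf{The main obstacle} I anticipate is not any single computation but the bookkeeping of verifying the almost-split property in the exact category $\CS(\gpr\mbox{-}\La)$ rather than in a module category: one must correctly identify which morphisms $(X\st u\rt Y)\rt(0\rt G)$ etc.\ fail to be split epimorphisms, and ensure the lifting arguments respect the commutative-square constraint defining morphisms in $\mathrm{H}(\La)$. The cleanest route to sidestep this is to observe that $\Psi_{\gpr\mbox{-}\La}$ carries almost split sequences not involving the ideal $\CU$ to almost split sequences (indeed the equivalence $\CS(\gpr\mbox{-}\La)/\CU\cong\md\mbox{-}(\ugpr\mbox{-}\La)$ of \cite[Theorem 3.2]{H} plus knowledge of almost split sequences in the semisimple category $\md\mbox{-}(\ugpr\mbox{-}\La)$ pins them down), reducing everything to the already-known almost split sequence $0\rt\Omega_\La G\st g\rt P_G\st f\rt G\rt 0$ in $\gpr\mbox{-}\La$ from Lemma~\ref{Lemma1}; the three cases then correspond to the three ways this sequence, its identity stretches, and its shifts sit inside $\CS(\gpr\mbox{-}\La)$. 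I expect the write-up to lead with this functorial reduction and relegate the componentwise exactness checks to one or two sentences.
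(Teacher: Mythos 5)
Your proposal is correct in substance but takes a different route from the paper. The paper does not verify the almost-split property by hand: it forms the push-out (for (2)) and pull-back (for (3)) of the given sequence $0\rt\Omega_{\La}(G)\st{g}\rt P_G\st{f}\rt G\rt 0$ and then cites Lemma 6.3(1)--(3) of \cite{H}, which already asserts, for a general quasi-resolving subcategory $\CX$, that these constructions produce almost split sequences in $\CS_{\CX}(\La)$; the only work actually done here is to rewrite the resulting middle terms $(P_G\st{\left[\begin{smallmatrix}1\\ f\end{smallmatrix}\right]}\rt P_G\oplus G)$ and $(\Omega_{\La}(G)\rt \Omega_{\La}(G)\oplus P_G)$ as the stated direct sums via the classification in Proposition \ref{pro11}. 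You instead re-derive the content of the cited lemma, and your verification of (1) is complete: the commutativity constraint forces $hu=0$ for any morphism $(X\st{u}\rt Y)\rt(0\rt G)$, so the lift $k$ of $h$ through $f$ satisfies $fku=0$ and the $X$-component factors uniquely through the monomorphism $g$; reducing left almost splitness to ``non-split, right almost split, and local endomorphism ring of the left term'' is the standard criterion and is valid in this Krull--Schmidt exact category. For (2) the ingredient your sketch needs is that a map $X\rt P_G$ extends along the inflation $u$ because $\Ext^1_{\La}(\mathsf{cok}\,u,P_G)=0$; that is the precise sense in which $(P_G\st{1}\rt P_G)$ acts injectively, and with it your factorization closes. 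The trade-off is self-containedness versus length. Two cautions. First, the duality $(-)^{*}$ sends $(A\st{f}\rt B)$ to $(\mathsf{cok}(f)^{*}\rt B^{*})$, which interchanges the types $(0\rt G)$ and $(G\st{1}\rt G)$ but sends the type $(\Omega_{\La}(G)\rt P_G)$ to an object of the same type over $\La^{\rm op}$, so deducing (3) from (1) or (2) by duality is not immediate; your direct-verification fallback (or the paper's pull-back plus \cite{H}) is the safer route. Second, the ``cleanest route'' you propose to lead with---pushing everything through $\Psi_{\gpr\mbox{-}\La}$ and the equivalence $\CS(\gpr\mbox{-}\La)/\CU\cong\md\mbox{-}(\ugpr\mbox{-}\La)$---cannot work for (1) and (2), since the ending terms $(0\rt G)$ and $(G\st{1}\rt G)$ are generators of the ideal $\CU$ and are annihilated by that functor; keep the direct argument as the actual proof.
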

\begin{proof}
$(1)$ The result follows immediately by applying \cite[Lemma 6.3(1)]{H} to the almost split sequence $0 \rt \Omega_{\La}(G) \st{g} \rt P_G\st{f} \rt G \rt 0$.\\ $(2)$ Consider the following push-out diagram in $\md\mbox{-}\La$
$$\xymatrix{
0 \ar[r] &\Omega_{\La}(G) \ar[d]^g \ar[r]^g & P_G \ar[d]^{\left[\begin{smallmatrix} 1\\ f \end{smallmatrix}\right]}
\ar[r]^f &G \ar@{=}[d]\ar[r] & 0&\\
0 \ar[r] & P_G \ar[r]^{{\left[\begin{smallmatrix}1\\0\end{smallmatrix}\right]}} & P_G\oplus G
\ar[r]^{[1~~0]} & G\ar[r] &0.& \\ } $$
In view of \cite[Lemma 6.3(2)]{H}, this diagram induces the almost split sequence in $\CS(\gpr\mbox{-}\La)$

$$0\lrt(\Omega_{\La}(G)\st{g}\rt P_G)\lrt (P_G\st{{{\left[\begin{smallmatrix}1\\f\end{smallmatrix}\right]}}}\rt P_G\oplus G)\lrt (G\st{1}\rt G)\lrt 0,$$which is the desired one, because
the middle object in this short exact sequence is isomorphic to $(P_G\st{1}\rt P_G)\oplus (0 \rt G)$. This can be understood by referring to the classification of indecomposable objects in $\CS(\gpr\mbox{-}\La)$, as provided in Proposition \ref{pro11}. \\
$(3)$ Consider the pull-back diagram
$$\xymatrix{ & \Omega_{\La}(G) \ar[d]^{\left[\begin{smallmatrix} -1 \\ g\end{smallmatrix}\right]}
\ar@{=}[r] & \Omega_{\La}(G) \ar[d]^g \\
\Omega_{\La}(G)\ar@{=}[d] \ar[r]^{\left[\begin{smallmatrix} 1 \\ 0\end{smallmatrix}\right]} & \Omega_{\La}(G)\oplus P_G
\ar[d]^{\left[\begin{smallmatrix} g & 1\end{smallmatrix}\right]} \ar[r]^{\left[\begin{smallmatrix} 0 & 1\end{smallmatrix}\right]} & P_G \ar[d]^f \\
\Omega_{\La}(G) \ar[r]^{g} & P_G
\ar[r]^{f} & G}	$$
By the use of \cite[Lemma 6.3(3)]{H}, the top and the middle rows give us the almost split sequence in $\CS(\gpr\mbox{-}\La)$

$$0\lrt(0\rt\Omega_{\La}(G))\lrt (\Omega_{\La}(G)\st{1}\rt\Omega_{\La}(G)\oplus P_G)\rt (\Omega_{\La}(G)\st{g}\rt P_G)\lrt 0.$$One should note that according to Proposition \ref{pro11}, the middle term in this sequence is isomorphic to $(\Omega_{\La}(G)\st{1}\rt \Omega_{\La}(G))\oplus (0 \rt P_G)$, and so, it is the desired almost split sequence. Thus the proof is complete.
\end{proof}

\begin{remark}
The middle term of the almost split sequence appeared in the first assertion of Lemma \ref{AlmostSplittrivialmonomorphisms} is always indecomposable. But the middle terms of almost split sequences that appeared in the second and third assertions are a direct sum of an indecomposable and a projective object. Since the projective direct summand may not be indecomposable, the middle terms have two or more direct summands. However, the stable Auslander-Reiten quivers, obtained by removing the projective-injective vertices, for each case of the almost split sequence in the above lemma, display a mesh with only one vertex in the middle.
\end{remark}

\begin{setup}\label{Setup1} Assume that $G$ is a non-projective $\La$-module and $n$ is the least positive integer such that $\Omega^n_{\La}(G)= G$. Take the exact sequence obtained from the minimal projective resolution of $G$
$$0 \rt \Omega^n_{\La}(G)\st{p_n}\rt P^{n-1}\st{p_{n-1}}\rt \cdots\rt P^1\st{p_1}\rt P^0\st{p_0}\rt G\rt 0.$$
For any $ 0 \leqslant i \leqslant n-1$, denote $\epsilon_i: 0\rt \Omega_{\La}^{i+1}(G)\st{j_i}\rt P^i\st{r_i}\rt \Omega_{\La}^i(G)\rt 0$.
\end{setup}
Let $G$ be a non-projective module in $\gpr\mbox{-}\La$. Denote by $\Gamma^s_{G}$ the component of the stable Auslander-Reiten quiver of $\CS(\gpr\mbox{-}\La)$ containing the vertex $(0 \rt G)$.

\begin{theorem}\label{Theorem 4.10}
Keeping the above notations, let $\La$ be a ${\rm G}$-semisimple algebra and $G$ a non-projective object in $\gpr\mbox{-}\La$ with $\Omega^n_{\La}(G)= G$, where $n=l(G)$, i.e., $n$ is the minimum positive integer for which $\Omega^n_{\Lambda}(G)\simeq G$. The following assertions hold.

\begin{itemize}
\item [$(1)$] The vertices of the component $\Gamma^s_G$ are corresponded to the following indecomposable objects
$$(0\rt\Omega_{\La}^{i+1}(G)), \\~~ (\Omega_{\La}^{i+1}(G)\st{1}\rt\Omega_{\La}^{i+1}(G))~~ \\~~ {\rm and } ~~(\Omega_{\La}^{i+1}(G)\st{j_i}\rt P^i),$$
where $i$ runs through $0\leqslant i \leqslant n-1$. In particular, $\Gamma^s_{G}$ has $3n$ vertices.

\item [$(2)$] The stable Auslander-Reiten quiver $\Gamma^s_{\CS(\gpr\mbox{-}\La)}$ is a disjoint union of all components $\Gamma^s_G$, where $G$ belongs to a fixed set of representatives of all equivalence classes in $\CC(\La)$. In particular, there is a bijection between the components of $\Gamma^s_{\CS(\gpr\mbox{-}\La)}$ and the elements of $\CC(\La)$.
\end{itemize}
\end{theorem}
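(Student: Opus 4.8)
The plan is to read everything off from the explicit almost split sequences in Lemma~\ref{AlmostSplittrivialmonomorphisms}, the classification in Proposition~\ref{pro11}, and the periodicity $\Omega^n_{\La}(G)\simeq G$ of Setup~\ref{Setup1}; no new almost split sequences need to be computed. For part $(1)$ I would fix the notation $a_i:=(0\rt\Omega^{i+1}_{\La}(G))$, $b_i:=(\Omega^{i+1}_{\La}(G)\st{1}\rt\Omega^{i+1}_{\La}(G))$ and $c_i:=(\Omega^{i+1}_{\La}(G)\st{j_i}\rt P^i)$, reading all indices modulo $n$; this is legitimate because $\Omega^n_{\La}(G)\simeq G$ forces $\Omega^{i+n}_{\La}(G)\simeq\Omega^i_{\La}(G)$ and hence $P^{i+n}\simeq P^i$. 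Since $\La$ is G-semisimple, Proposition~\ref{Omega-algebra}$(ii)$ together with Lemma~\ref{Lemma1} shows that for every $i$ the almost split sequence in $\gpr\mbox{-}\La$ ending at $\Omega^i_{\La}(G)$ is precisely the sequence $\epsilon_i$ of Setup~\ref{Setup1}. Feeding the modules $\Omega^i_{\La}(G)$ and $\Omega^{i+1}_{\La}(G)$ into the three cases of Lemma~\ref{AlmostSplittrivialmonomorphisms} then produces, for each $i$, almost split sequences in $\CS(\gpr\mbox{-}\La)$ all of whose non projective-injective terms lie in $\{a_j,b_j,c_j\}$, and in particular the translation rules $\tau a_i=b_{i+1}$, $\tau b_i=c_{i+1}$, $\tau c_i=a_i$.

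Granting this, part $(1)$ follows quickly. The resulting meshes assemble into the cyclic chain $\cdots\to a_i\to b_i\to c_i\to a_{i-1}\to\cdots$ on the $3n$ objects $\{a_i,b_i,c_i : 0\leqslant i\leqslant n-1\}$, so this subquiver is connected; it contains $a_{n-1}=(0\rt G)$; and, using also the sequences ending at $\tau^{-1}a_i=c_i$, $\tau^{-1}b_i=a_{i-1}$ and $\tau^{-1}c_i=b_{i-1}$, it is closed under every arrow of $\Gamma^s_{\CS(\gpr\mbox{-}\La)}$; hence it is exactly the component $\Gamma^s_G$. The $3n$ listed objects are pairwise non-isomorphic: an object with zero source, an identity map of a nonzero module, and a non-invertible monomorphism into a projective from a non-projective module are mutually non-isomorphic in $\CS(\gpr\mbox{-}\La)$ for shape reasons; and within one family $\Omega^{i+1}_{\La}(G)\simeq\Omega^{j+1}_{\La}(G)$ with $0\leqslant i<j\leqslant n-1$ would, since $\Omega_{\La}$ is an equivalence on $\ugpr\mbox{-}\La$ preserving indecomposables, force $G\simeq\Omega^{j-i}_{\La}(G)$ with $0<j-i<n$, contradicting $n=l(G)$. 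This gives the count $3n$.

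For part $(2)$, Proposition~\ref{pro11} says that every indecomposable non-projective object of $\CS(\gpr\mbox{-}\La)$ has the form $(0\rt H)$, $(H\st{1}\rt H)$ or $(\Omega_{\La}(H)\hookrightarrow P_H)$ for some indecomposable non-projective $H$ in $\gpr\mbox{-}\La$, and part $(1)$ applied to $H$ exhibits these as the vertices $a_{n-1},b_{n-1},c_0$ of $\Gamma^s_H$; hence every vertex of $\Gamma^s_{\CS(\gpr\mbox{-}\La)}$ lies in some $\Gamma^s_H$, so the components of $\Gamma^s_{\CS(\gpr\mbox{-}\La)}$ are exactly the distinct $\Gamma^s_H$. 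Finally $\Gamma^s_G=\Gamma^s_{G'}$ iff $(0\rt G')$ is a vertex of $\Gamma^s_G$; by part $(1)$ and the shape argument this forces $(0\rt G')$ to be some $a_i$, i.e. $G'\simeq\Omega^{i+1}_{\La}(G)$, equivalently $[G]=[G']$ in $\CC(\La)$ (conversely $G'\simeq\Omega^k_{\La}(G)$ gives $(0\rt G')\simeq a_{k-1}\in\Gamma^s_G$). Thus $[G]\mapsto\Gamma^s_G$ is a bijection between $\CC(\La)$ and the set of components, and $\Gamma^s_{\CS(\gpr\mbox{-}\La)}$ is the corresponding disjoint union.

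I expect no deep obstacle here: the substantive content is already isolated in Lemma~\ref{AlmostSplittrivialmonomorphisms}, and the rest is careful bookkeeping. The points needing attention are that passing to the stable quiver must discard exactly the projective-injective summands $(P^i\st{1}\rt P^i)$ and $(0\rt P^i)$ occurring in the middle terms of Lemma~\ref{AlmostSplittrivialmonomorphisms}, so that each mesh of $\Gamma^s_G$ has a single middle vertex (as recorded in the remark following that lemma), and that the index arithmetic modulo $n$ is applied consistently at the wrap-around $i=n-1$, where $\Omega^n_{\La}(G)\simeq G$ and $P^n\simeq P^0$ are used.
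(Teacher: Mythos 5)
Your proposal is correct and follows essentially the same route as the paper: both derive the component from Lemma \ref{AlmostSplittrivialmonomorphisms} applied to the sequences $\epsilon_i$ of Setup \ref{Setup1}, glue the resulting meshes using the periodicity $\Omega^n_{\La}(G)\simeq G$, delete the projective--injective vertices, and read off the $3n$ count and the bijection with $\CC(\La)$. Your explicit $\tau$-rules $\tau a_i=b_{i+1}$, $\tau b_i=c_{i+1}$, $\tau c_i=a_i$, the closure-under-arrows check, and the pairwise non-isomorphism argument via minimality of $n=l(G)$ are just a cleaner bookkeeping of what the paper records pictorially.
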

\begin{proof}
Applying Lemma \ref{AlmostSplittrivialmonomorphisms} (1) to the almost split sequence $\epsilon_0$ and Lemma \ref{AlmostSplittrivialmonomorphisms} (2) and (3) to the almost split sequence $\epsilon_1 $ (if $n=1$, applying for $\epsilon_0, \epsilon_0$), one may reach the following part of $\Gamma_{\CS(\gpr\mbox{-}\La)}$
\[
\xymatrix @R=0.2cm @C=0.5cm {
&&&&&& 0P^0_1\ar[rddd]&&&\\&&& & && \colon\ar[rdd]&&&\\&&&&&&0P^0_{n_0}\ar[dr]& &&	\\& &&\Omega^2(G)\Omega^2(G)\ar[dr]\ar@{.}[rr]&	&0\Omega(G)\ar[dr]\ar[ru]\ar[ruu]\ar[ruuu]\ar@{.}[rr]&&\Omega(G)P^0\ar[dr]&&&\\& &0\Omega^2(G)\ar[dr]\ar[ru]\ar[rddd]\ar[rdd]\ar@{.}[rr]&&
\Omega^2(G)P^1\ar[dr]\ar@{.}[rr]\ar[ru]\ar[rdd]\ar[rddd]&&\Omega(G)\Omega(G)\ar[ru]\ar@{.}[rr]&&0G&&\\
&&&0P^1_1\ar[ur]&	&P^1_1P^1_1\ar[ru]&&&&&\\&&&\colon\ar[ruu]&& \colon\ar[ruu]&&&&\\&&& 0P^1_{n_1}\ar[ruuu]&&P^1_{n_1}P^1_{n_1}\ar[ruuu]&&&&}
\]where $P^0=\oplus_{i=1}^{n_0}P^0_{i}$ and $P^1=\oplus_{i=1}^{n_1}P^1_{i}$. To save space, we have omitted the maps and the subscript ``$\La$".
Repeating the same as construction for the pair of the short exact sequences $(\epsilon_1, \epsilon_2)$ until to the pair $(\epsilon_{n-2}, \epsilon_{n-1})$, we will obtain $n-1$ full subquivers of $\Gamma_{\CS(\rm{Gprj}\mbox{-}\La)}$ as the above such the one corresponding to $(\epsilon_{n-2}, \epsilon_{n-1})$ has the object $(0 \rt G)$ in the leftmost side. Hence the construction will stop at $(n-1)$-th step. By glowing the obtained full subquivers we obtain the full subquiver $\tilde{\Gamma}_G$ of $\Gamma_{\CS(\gpr\mbox{-}\La)}$ containing the $\tau_{\CG}$-orbit of $(0\rt G)$. Be deleting the projective-injective vertices of the full subquiver $\tilde{\Gamma}_G$, we get the component $\Gamma^s_G,$ as presented in the below

{\footnotesize{\[
\xymatrix @R=0.3cm @C=0.4cm {
&\Omega^n(G)\Omega^n(G)\ar[rd]\ar@{.}[r]&&&\cdots&	&0\Omega(G)\ar[dr]\ar@{.}[rr]&&\Omega(G)P^0\ar[dr]&\\0\Omega^n(G)\ar[ru]\ar@{.}[rr]&&\Omega^n(G)P^{n-1}&&\cdots&
\Omega^2(G)P^1\ar@{.}[rr]\ar[ru]&&\Omega(G)\Omega(G)\ar[ru]\ar@{.}[rr]&&0G}\]}}We have to identify the vertex corresponding to $(0 \rt \Omega^n_{\La}(G))$ with to $(0\rt G)$ as by our assumption $\Omega^n_{\La}(G)=G$. By the construction, we observe that the vertices of $\Gamma^s_G$ are determined uniquely with indecomposable modules in the equivalence class $[G]$. This establishes a bijection from $\CC(\La)$ to the set of the components $\Gamma^s_{\CS(\gpr\mbox{-}\La)}$, giving the second assertion. Simultaneously, our proof gives the classification of the component of the stable Auslander-Reiten quiver of $\CS(\gpr\mbox{-}\La)$, asserted in the first assertion. So the proof is finished.
\end{proof}

According to the proof of Theorem \ref{Theorem 4.10}, we observe that the module $(0 \rightarrow G)$ has $\tau_{\CS}$-periodic length $3(l(G)-1)$.
This observation will be generalized in Lemma \ref{AlmostSplitNmon}.
\begin{remark}
Denote $Z_n$ a basic $n$-cycle with the vertex set $\{1, 2,\cdots, n \}$
and the arrow set $\{\alpha_1, \alpha_2, \cdots, \alpha_n\}$, where $s(\alpha_i)=i$ and $t(\alpha_i)=i+1$ for each $1\leqslant i \leqslant n$. Here, we identify $n+1$ with 1. Let $I$ denote the arrow ideal of the path algebra $kZ_n$ over a field $k$. We recall that $I$ is generated by all arrows in $Z_n.$ Assume that $\La$ is a finite dimensional algebra over an algebraic closed field $k$ of characteristic different from two. Let $\Gamma^s_{G_1}, \cdots, \Gamma^s_{G_n}$ denote all the components, and denote by $d_i$ the number of vertices in $\Gamma^s_{G_i}$ for each $1\leqslant i \leqslant n$. Our assumption on $\La$ guarantees that the components of $\Gamma^s_{\CS(\rm{Gprj}\mbox{-}\La)}$ must be standard. Hence, in view of Theorem \ref{Theorem 4.10},
there is an equivalence
$$\text{ind}\mbox{-}\underline{\CS(\gpr\mbox{-}\La)}\simeq kZ_{d_1}/I^2_1 \times \cdots \times kZ_{d_n}/I^2_n,$$
where $kZ_{d_i}/I^2_i$ is the quotient category of the path category $kZ_n$ modulo the ideal $I^2_i$, the radical square ideal, indeed, generated by all paths of length $\geqslant 2$, and $3 | d_i$. Moreover, if $\La$ is Gorenstein, $T_2(\La)$ is also Gorenstein \cite[Corollary 4.3]{AHKe1}. Using a result of Buchweitz \cite{Bu}, we obtain that the equivalence $\mathbb{D}_{\rm{sg}}(T_2(\La))\simeq \underline{\CS(\gpr\mbox{-}\La)}$. Therefore, we have $$\text{ind}\mbox{-}\mathbb{D}_{\text{sg}}(T_2(\La))\simeq kZ_{d_1}/I^2_1 \times \cdots \times kZ_{d_n}/I^2_n.$$ This equivalence yields a corresponding equivalence:

\begin{equation}\label{equivalencesing} \mathbb{D}_{\text{sg}}(T_2(\La))\simeq \text{prj}\mbox{-} kZ_{d_1}/I^2_1 \times \cdots \times \text{prj}\mbox{-}kZ_{d_n}/I^2_n.
\end{equation}

Based on Happel's work, the left side $\underline{\CS(\gpr\mbox{-}\La)}$ of the equivalence \eqref{equivalencesing} forms a triangulated category. Consequently, we can infer from this equivalence that the category of projective objects in any quotient path category $kZ_{3n}/I^2$ (on the right side) can be endowed with an enriched triangulated structure. It would be of interest to investigate this induced triangulated structure on the projective category of each quotient category $\mathsf{prj}\mbox{-} kZ_{3n}/I^2$. By considering all the categories $\mathsf{prj}\mbox{-} kZ_{d_i}/I^2_i$ equipped with the induced triangulated structure, the additive equivalence in \eqref{equivalencesing} becomes a triangle equivalence.
\end{remark}

The result below, which generalizes \cite[Corollary 3.1]{Ka}, applies
Theorem \ref{Theorem 4.10} to interpret $\CC(\La)$ in terms of the number of components of $\Gamma^s_{\CS(\gpr\mbox{-}\La)}$.

We recall that two Artin algebras $\La$ and $\La'$ are said to be derived equivalent if there exists an equivalence of triangulated categories $\mathbb{D}^{\rm{b}}(\md\mbox{-}\La)\simeq \mathbb{D}^{\rm{b}}(\md\mbox{-}\La')$. It is proven that if two Artin algebras $A$ and $A'$ are derived equivalent, then there exists a triangulated equivalence $\ugpr\mbox{-}A\simeq \ugpr\mbox{-}A'$. This result is shown in \cite[Theorem 4.1.2]{AHV} and also in \cite{HP} using the notion of the stable functor.

\begin{corollary}
Let $\La$ and $\La'$ be {\rm G}-semisimple algebras (not necessarily finite-dimensional algebras). If $\La$ and $\La'$ are derived equivalent, then there is a bijection
$$f: \CC(\La)\rt \CC(\La')$$
such that $l(f([G]))=l([G])$ for all $[G] \in \CC(\La).$
\end{corollary}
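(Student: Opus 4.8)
The plan is to exploit the derived-invariance of the stable Gorenstein-projective category together with the functorial description of $\CC(\La)$ established earlier in the paper. Recall from \cite[Theorem 4.1.2]{AHV} that a derived equivalence $\mathbb{D}^{\rm b}(\md\mbox{-}\La)\simeq\mathbb{D}^{\rm b}(\md\mbox{-}\La')$ induces a triangle equivalence $\Phi:\ugpr\mbox{-}\La\xrightarrow{\ \sim\ }\ugpr\mbox{-}\La'$. Since both algebras are G-semisimple, Proposition \ref{Omega-algebra}$(i)$ tells us that $\ugpr\mbox{-}\La$ and $\ugpr\mbox{-}\La'$ are semisimple triangulated categories whose suspension is the cosyzygy $\Omega^{-1}$; in particular the triangles are exactly the split ones built from the trivial triangles listed in the proof of Proposition \ref{Omega-algebra}$(i)$. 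The first step is therefore to observe that $\Phi$, being a triangle functor, must commute with the shift up to natural isomorphism: $\Phi\circ\Omega^{-1}_{\La}\simeq\Omega^{-1}_{\La'}\circ\Phi$, and hence $\Phi\circ\Omega_{\La}\simeq\Omega_{\La'}\circ\Phi$ as well (using that $\Omega_\La$ is the quasi-inverse of $\Omega^{-1}_\La$ on $\ugpr\mbox{-}\La$).

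Next I would pass from the equivalence of categories to the level of indecomposables. A triangle equivalence restricts to a bijection $\mathrm{ind}\mbox{-}\ugpr\mbox{-}\La\to\mathrm{ind}\mbox{-}\ugpr\mbox{-}\La'$ on isomorphism classes of indecomposable objects (it preserves indecomposability and is dense and fully faithful). Define $f$ on $\CC(\La)$ by sending the equivalence class $[G]$ to $[\Phi(G)]$; this is well defined and injective because of the naturality isomorphism from the previous step: if $G'\simeq\Omega^n_\La(G)$ in $\ugpr\mbox{-}\La$ for some $n\in\Z$, then $\Phi(G')\simeq\Phi(\Omega^n_\La(G))\simeq\Omega^n_{\La'}(\Phi(G))$, so $\Phi$ carries whole $\Omega_\La$-orbits to whole $\Omega_{\La'}$-orbits, and bijectively so since $\Phi$ is an equivalence with an inverse of the same kind. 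Recall that by the discussion preceding Remark \ref{remark 2.7}, $\CC(\La)$ is precisely the set of $\Omega_\La$-orbits of indecomposable non-projective Gorenstein projectives (equivalently, by Proposition \ref{Omega-algebra}$(ii)$, the set of $\tau_{\CG}$-orbits), so $f$ is a genuine bijection $\CC(\La)\to\CC(\La')$.

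Finally, to check $l(f([G]))=l([G])$, recall that $l(G)$ is the least $n>0$ with $\Omega^n_\La(G)\simeq G$ in $\gpr\mbox{-}\La$; since $G$ is non-projective and indecomposable, $\Omega_\La$ is a bijection on such classes (by \cite[Lemma 2.2]{C1}, as noted in the paragraph before Remark \ref{remark 2.7}), so this is the same as the least $n>0$ with $\Omega^n_\La(G)\simeq G$ in $\ugpr\mbox{-}\La$. Applying $\Phi$ and the naturality isomorphism $\Phi\circ\Omega^n_\La\simeq\Omega^n_{\La'}\circ\Phi$ gives $\Omega^n_{\La'}(\Phi(G))\simeq\Phi(G)$ in $\ugpr\mbox{-}\La'$ if and only if $\Omega^n_\La(G)\simeq G$ in $\ugpr\mbox{-}\La$, because $\Phi$ reflects as well as preserves isomorphisms. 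Hence the two minimal periods coincide, i.e.\ $l(f([G]))=l([G])$.

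The only genuine subtlety\,---\,and the step I would be most careful about\,---\,is the interplay between isomorphism in the stable category $\ugpr\mbox{-}\La$ and isomorphism in $\gpr\mbox{-}\La$ when defining and comparing $l$. For \emph{non-projective} indecomposables this is harmless: two non-projective indecomposable Gorenstein projectives are isomorphic in $\ugpr\mbox{-}\La$ if and only if they are isomorphic in $\gpr\mbox{-}\La$ (a stable isomorphism lifts to an honest isomorphism modulo projective summands, and indecomposability forbids nonzero projective summands). One must also note that $\Phi$ automatically sends non-projective objects to non-projective objects, since the zero object is the only object isomorphic to $0$ in either stable category. Granting these routine observations, the argument above goes through verbatim, and no G-semisimplicity beyond what is needed for $\Phi$ to exist is actually used\,---\,the hypothesis is really there only to guarantee via \cite{AHV} and the structure of $\ugpr$ that the orbit combinatorics is well behaved.
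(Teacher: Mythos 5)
Your proof is correct, but it takes a genuinely different route from the paper. The paper never works with the equivalence $\ugpr\mbox{-}\La\simeq\ugpr\mbox{-}\La'$ directly: instead it first upgrades the derived equivalence of $\La$ and $\La'$ to one of $T_2(\La)$ and $T_2(\La')$ via Asashiba's gluing theorem, applies \cite[Theorem 4.1.2]{AHV} to get $\ugpr\mbox{-}T_2(\La)\simeq\ugpr\mbox{-}T_2(\La')$, identifies these with $\underline{\CS(\gpr\mbox{-}\La)}$ and $\underline{\CS(\gpr\mbox{-}\La')}$, and then invokes Theorem \ref{Theorem 4.10}(2): the components of $\Gamma^s_{\CS(\gpr\mbox{-}\La)}$ are in bijection with $\CC(\La)$ and the component attached to $[G]$ has exactly $3\,l(G)$ vertices, so an isomorphism of stable AR-quivers induces the desired $l$-preserving bijection. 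Your argument applies \cite[Theorem 4.1.2]{AHV} one level down, to $\La$ and $\La'$ themselves, and uses only that a triangle equivalence $\Phi$ commutes with the suspension $\Omega^{-1}$ and restricts to a bijection on isomorphism classes of nonzero indecomposables; since $\CC(\La)$ is exactly the set of $\Omega$-orbits and $l$ is the $\Omega$-period, everything follows. Your route is more economical (no gluing theorem, no $T_2$, no AR-quiver combinatorics) and makes the invariance of $l$ transparent as a shift-period; the paper's route is longer but fits the narrative of Section 5, yielding as a by-product the equivalence $\underline{\CS(\gpr\mbox{-}\La)}\simeq\underline{\CS(\gpr\mbox{-}\La')}$ and tying the corollary to the component count of Theorem \ref{Theorem 4.10}. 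Your closing cautions (stable versus honest isomorphism for non-projective indecomposables, and that G-semisimplicity is only needed so that $l$ is finite and $\CC$ is defined) are exactly the right ones and are handled correctly.
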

\begin{proof}
Since $\mathbb{D}^{\rm{b}}(\md\mbox{-}\La)\simeq \mathbb{D}^{\rm{b}}(\md\mbox{-}\La')$, specializing \cite[Theorem 8.5]{A} to the case that the quiver is $\bullet\rt \bullet$,
gives us the derived equivalence $\mathbb{D}^{\rm{b}}(\md\mbox{-} T_2(\La))\simeq \mathbb{D}^{\rm{b}}(\md\mbox{-}T_2(\La'))$. This combined with \cite[Theorem 4.1.2]{AHV},
enables us to infer that $\ugpr\mbox{-}T_2(\La)\simeq \ugpr\mbox{-}T_2(\La')$. Now the triangle equivalences $\ugpr\mbox{-}T_2(\La)\simeq \underline{\CS(\gpr\mbox{-}\La)} $ and $\ugpr\mbox{-}T_2(\La')\simeq \underline{\CS(\gpr\mbox{-}\La')} $, yield the equivalence
$\underline{\CS(\gpr\mbox{-}\La)}\simeq \underline{\CS(\gpr\mbox{-}\La')}$. This implies that the stable Auslander-Reiten quivers of $\Gamma^s_{\CS(\gpr\mbox{-}\La)}$ and $\Gamma^s_{\CS(\gpr\mbox{-}\La')}$ are isomorphic. Now Theorem \ref{Theorem 4.10}(2) completes the proof.
\end{proof}

Assume that $\La$ and $\La'$ are quadratic monomial algebras that are derived equivalent. According to the above corollary and Remark 2.7, we can conclude that the perfect components of the relation quiver $\mathcal{R}_\Lambda$ are in bijection with those of $\mathcal{R}_{\Lambda'}$.

\subsection{The case: quiver $A_n$}

Let $A_n=v_1 \rt \cdots \rt v_n$ be the linear quiver with $n \geq 1$ vertices. The path algebra $\La A_n$ is given by the lower triangular
matrix algebra of $\La$
$$T_n(\La)=
\left[ \begin{array}{cccc}

\La & 0 & \cdots & 0 \\
\La & \La & \cdots & 0 \\
\vdots & \vdots & \ddots & \vdots \\
\La & \La & \cdots & \La \\
\end{array} \right]
$$
Moreover, a representation $X$ of $A_n$ over $\La$ is a datum as described in below:
$$X=(X_1\st{f_1}\rt X_2\st{f_2}\rt \cdots \st{f_{n-1}}\rt X_n)$$
where $X_i$ is a $\La$-module and $f_i:X_i\rt X_{i+1}$ is a $\La$-homomorphism. A morphism from representation $X$ to representation $Y$ is a datum $(\alpha_i:X_i\rt X_{i+1})_{1\leqslant i \leqslant n-1}$ such that for each $1\leqslant i \leqslant n-1$
\[\xymatrix{X_i \ar[r]^{f_i} \ar[d]_{\alpha_2} & X_{i+1} \ar[d]^{\alpha_{i+1}} \\
Y_i \ar[r]^{g_i} & Y_{i+1} }\]
commutes. Denote by $\CS_n(\gpr\mbox{-}\La):=\gpr(A_n, \La).$ In particular, $\CS_2(\gpr\mbox{-}\La)=\CS(\gpr\mbox{-}\La).$

\begin{proposition}\label{linearquiver}
Let $\La$ be a {\rm G}-semisimple algebra. Then any indecomposable representation in $\CS_n(\gpr\mbox{-}\La)$ is isomorphic to a representation of the following form: either
$$ {[i, j, G]}:=(0 \rt \cdots\rt 0 \rt \Omega_{\La}(G) \st{1}\rt \Omega_{\La}(G) \cdots \rt \Omega_{\La}(G) \st{1}\rt \Omega_{\La}(G) \st{l} \rt P \st{1}\rt \cdots \st{1}\rt P) $$
where $G$ is an indecomposable Gorenstein projective module, $1\leqslant i \leqslant j \leqslant n,$ the first $\Omega_{\La}(G)$ is settled in the $i$-th vertex and the last one in the $j$-th vertex, the map $l$ attached to the arrow $v_j\rt v_{j+1}$ is the inclusion $\Omega_{\La}(G)\hookrightarrow P$, or
$$ {[0, 0, P]}:=(P \st{1}\rt P \rt \cdots \rt P \st{1}\rt P ) $$
where $P$ is an indecomposable projective module, and all maps are the identity on $P.$
\end{proposition}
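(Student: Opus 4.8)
The plan is to induct on $n$, using the restriction functors $e^{A_{n-1}}$ to peel off the last vertex and invoking the $n=2$ case (Proposition \ref{pro11}) as the base. First I would recall, from Theorem \ref{Gorenproj charecte}, that an object of $\CS_n(\gpr\mbox{-}\La)=\gpr(A_n,\La)$ is precisely a representation $X=(X_1\st{f_1}\rt\cdots\st{f_{n-1}}\rt X_n)$ in which every $f_i$ is a monomorphism with Gorenstein projective cokernel (the Gorenstein projectivity of the terms themselves being automatic for the linear, hence acyclic, quiver). Given an indecomposable such $X$, the truncation $X'=(X_1\st{f_1}\rt\cdots\st{f_{n-2}}\rt X_{n-1})$ lies in $\CS_{n-1}(\gpr\mbox{-}\La)$; decompose it into indecomposables, each of which — by the inductive hypothesis — is of the form $[i,j,G]$ (restricted to $A_{n-1}$) or $[0,0,P]$. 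The task is then to see how the extra map $f_{n-1}\colon X_{n-1}\rt X_n$ into a Gorenstein projective $X_n$, with $\mathsf{cok}f_{n-1}$ again Gorenstein projective, forces the whole representation onto the asserted list.

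The key mechanism is Lemma \ref{Lemma1}: any short exact sequence with terms in $\gpr\mbox{-}\La$ decomposes as a direct sum of copies of $0\rt\Omega_\La(A)\rt P\rt A\rt 0$, $0\rt 0\rt B\st{1}\rt B\rt 0$, and $0\rt C\st{1}\rt C\rt 0\rt 0$. Applying this to $0\rt X_{n-1}\st{f_{n-1}}\rt X_n\rt \mathsf{cok}f_{n-1}\rt 0$ splits that last slot of $X$ into blocks of three shapes: a block where $f_{n-1}$ is an isomorphism (contributing, together with the truncated data, a string $[i,j,G]$ or $[0,0,P]$ extended by an identity at the $n$-th vertex), a block where $X_{n-1}=0$ and $X_n=B$ is free Gorenstein projective at the end (which, by indecomposability of $B$, must be one of the strings $[n,n,\Omega_\La^{-1}B]$-type, or more precisely matches $[i,j,G]$ with $i=j=n$ after adjusting syzygy), and a block where $f_{n-1}$ is the inclusion $\Omega_\La(C)\hookrightarrow P$ with $C=\mathsf{cok}f_{n-1}$ Gorenstein projective. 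One then checks that these blocks are compatible with the truncated decomposition only in the ways recorded by the families $[i,j,G]$ and $[0,0,P]$: a string of identities on $\Omega_\La(G)$ can be capped off either by continuing the identity (increasing $j$) or by the inclusion $\Omega_\La(G)\hookrightarrow P$ (which must then be followed only by identities on $P$, since $P$ is projective-injective in $\gpr\mbox{-}\La$ and any further cokernel-Gorenstein-projective monomorphism out of $P$ splits). Indecomposability of $X$ then selects a single such string, and a change of the generator $G$ (replacing $G$ by $\Omega_\La^{-1}G$ where needed, using Proposition \ref{Omega-algebra}(ii) and $\Omega_\La$-periodicity) puts it in exactly the stated normal form $[i,j,G]$ or $[0,0,P]$.

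The main obstacle I anticipate is the bookkeeping at the "splicing" step: showing that when the truncated representation $X'$ decomposes, the indecomposable summand of $X$ cannot straddle two distinct summands of $X'$ in an essential way — i.e., that the map $f_{n-1}$ does not create a genuinely new indecomposable by gluing, say, an $[i,j,G]$-string to a $[0,0,P]$-string. This is exactly the phenomenon controlled by the semisimplicity of $\md\mbox{-}(\ugpr\mbox{-}\La)$: in the stable category $\ugpr\mbox{-}\La$ there are no nonzero maps between non-isomorphic indecomposables and every endomorphism is a scalar (Remark \ref{dd}), so the only homomorphisms between the building-block strings are the obvious diagonal ones, and Fitting's lemma together with the classification of indecomposables in $\CS(\gpr\mbox{-}\La)$ (Proposition \ref{pro11}) at each consecutive pair of vertices rules out exotic gluings. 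Once this rigidity is in hand, a direct induction finishes the argument; an efficient alternative I would also consider is to deduce the statement from the epivalence $\underline{\Psi}\colon\ugpr(A_n,\La)\rt{\rm rep}(A_n,\ugpr\mbox{-}\La)\simeq\prod_{i}{\rm rep}(A_n,k)$ of Theorem \ref{main}, since the indecomposables of ${\rm rep}(A_n,k)$ are the well-known interval modules, and then lift each interval module through $\underline{\Psi}$, using the explicit density construction in the proof of Theorem \ref{main}, to exactly one of the normal forms $[i,j,G]$ (the projective representations $[0,0,P]$ accounting for the kernel of $\underline\Psi$).
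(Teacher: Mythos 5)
Your overall strategy---reducing everything to the two-vertex classification via Lemma \ref{Lemma1} applied at individual arrows and then assembling the pieces by induction---is in the right spirit, but your primary route has a genuine gap exactly at the point you flag, and the paper's proof is organized precisely to sidestep it. You induct on $n$ by truncating at the \emph{last} vertex, decomposing the truncation $X'$ into indecomposables, and then arguing that $f_{n-1}$ cannot glue distinct summands of $X'$ into a new indecomposable. That ``no exotic gluing'' claim is the entire difficulty: after decomposing $X'$ you must simultaneously diagonalize $f_{n-1}$ with respect to a decomposition of $X_{n-1}$ that is already constrained by $f_1,\dots,f_{n-2}$, and the appeal to semisimplicity of $\ugpr\mbox{-}\La$ together with Fitting's lemma does not settle this, because the dangerous morphisms (for instance between two strings whose $(n-1)$-st terms are both the projective $P$, or from a $[0,0,P]$-string into an $[i,j,G]$-string) factor through projectives and are therefore invisible in the stable category; Remark \ref{dd} says nothing about them, and moreover that remark assumes $\La$ is finite dimensional over an algebraically closed field, a hypothesis absent from Proposition \ref{linearquiver}. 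The paper instead runs an \emph{inverse} induction on the least $m$ with $X_m\neq 0$: it applies Lemma \ref{Lemma1} to the single map $f_m$ out of the first nonzero vertex, discards the summand type $(0\rt G)$ since $X_m\neq 0$, contracts $f_m$ when it is an isomorphism (so that $\End_{A_n}(X)\simeq\End_{A_n}(X')$ for the subrepresentation $X'$ with $X'_m=0$ and the induction applies), and in the remaining case $f_m=(\Omega_{\La}(G)\hookrightarrow P)$ propagates forward, observing that every subsequent $f_d$ must be an isomorphism because a summand of type $(\Omega_{\La}(G')\hookrightarrow P')$ cannot have projective source. With this ordering one never has to re-decompose a truncation and then re-glue. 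You also omit the converse verification that each $[i,j,G]$ is indecomposable, which the paper carries out by a direct endomorphism computation using that $l$ is a minimal left $\prj\La$-approximation.

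Your alternative route through the epivalence $\underline{\Psi}$ of Theorem \ref{main} is conceptually attractive and essentially sound: an epivalence induces a bijection on isomorphism classes and detects direct sum decompositions, ${\rm rep}(A_n,\ugpr\mbox{-}\La)$ decomposes into interval modules, and each interval is hit by the explicitly constructed $[i,j,G]$, so fullness plus reflection of isomorphisms forces every indecomposable nonprojective object onto the list up to projective summands. But the density half of Theorem \ref{main} is only proved for G-semisimple finite-dimensional algebras over an algebraically closed field, so to recover the proposition at its stated generality you would need to note that full density is not required here---only surjectivity onto the intervals, which the $[i,j,G]$ provide---and you would still owe the reader the indecomposability of each $[i,j,G]$ and the identification of the projective representations. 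The paper's argument is self-contained, elementary, and valid for arbitrary G-semisimple Artin algebras.
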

\begin{proof}
Since the case $n=1$ is clear, we assume $n\geq 2.$ By the local characterization of Gorenstein projective representations given in Theorem \ref{Gorenproj charecte}, a representation $X=( X_1 \st{f_1} \rt \cdots \rt X_{n-1} \st{f_{n-1}} \lrt X_n)$ in $\rm{rep}(A_n, \La)$ is Gorenstein projective if and only if the following conditions hold:
\begin{itemize}
\item [$(1)$] For any $1 \leq i \leq n$, $X_i$ is a Gorenstein projective module.
\item [$(2)$] For any $1 \leq i \leq n-1,$ $\mathsf{cok}f_i$ is a Gorenstein projective module and $f_i$ is a monomorphism.
\end{itemize}
Assume that $X=(X_1\st{f_1}\rt X_2\rt \cdots \rt X_{n-1}\st{f_{n-1}}\rt X_n)$ is an indecomposable Gorenstein projective representation in $\text{rep}(\CQ, \La)$ which is not isomorphic to a representation of the form $[0, 0, P]$, for some indecomposable projective module $P$. We claim that it is isomorphic to a representation given in the statement. At the end of the proof, we will show that any representation of that form is indecomposable.

Assume that $m$ is the least integer such that $X_m\neq 0.$ We prove the claim by the inverse induction on $m$. If $m=n$, then the case is clear, as $X=[n, n, \Omega^{-1}_{\La}(X_n)]$.. Assume that $m<n.$ Denote by $X'$ the sub-representation of $X$ such that $X'_m=0$ and $X'_d=X_d$ for all $m < d \leqslant n$. Lemma \ref{Lemma1} and the indecomposability of $X$, would imply that the monomorphism $f_m:X_m\rt X_{m+1}$ is isomorphic to one of the cases: $(1): (0\rt G), (2):( G\st{1}\rt G), (3): (\Omega_{\La}(G)\hookrightarrow P)$. The first case is impossible as $X_m$ would be 0, which is a contradiction. If the second case holds, then one may easily observe that $\End_{A_n}(X)\simeq \End_{A_n}(X')$. So $X'$ is indecomposable and then, the characterization given in Theorem \ref{Gorenproj charecte}, forces $X'$ to be Gorenstein projective. Hence $X'$ is an indecomposable Gorenstein projective representation with $X'_m=0$. Our inductive assumption implies that $X' \simeq {[m+1, j, G]}$ for some $m+1 \leqslant j\leqslant n$, and so, $X\simeq {[m, j, G]}$. If the third case holds, then we show that $X \simeq {[m, m, G]}$. In the case $m+1=n$, there is nothing to prove, as $X\simeq [n-1, n-1, G]$. So assume that $m+1<n$. According to Lemma \ref{Lemma1}, there are three cases for the monomorphism $f_{m+1}:P\rt X_{m+2}$, similar to what we discuss for the monomorphism $f_m$. The first and third cases are impossible. Only the second case holds. This allows us to identify $f_{m+1}$ with the identity map of $P$. Continuing the same argument for the remaining morphisms implies that they also may be identified with the identity map on $P$. Thus, we obtain the desired form.

Now we prove a representation ${[i, j, G]}$ as in the statement is indecomposable. Assume that $\alpha=(\alpha_i)_{1\leqslant i \leqslant n-1}$ is an endomorphism of ${[i, j, G]}$. Since $\alpha$ satisfies the commutativity conditions, $\alpha_i=\alpha_d=\alpha_j$ for $i \leqslant d \leqslant j$ and $\alpha_{j+1}=\alpha_s=\alpha_n$ for $j+1\leqslant s \leqslant n.$ If $\alpha_i=\alpha_d, (i\leqslant d \leqslant j)$ are automorphisms, then by applying this fact that $l$ is a minimal left $\prj\La$-approximation, one deduces that $\alpha_{j+1}=\alpha_s, (j+1\leqslant s \leqslant n)$ are also automorphisms. Hence, $\alpha$ is an automorphism. If $\alpha_i=\alpha_d$ for all $i\leqslant d \leqslant j$ are not automorphisms, then they are nilpotent as $\Omega_{\La}(G)$ is indecomposable. The commutative diagram associated with the arrow $v_{j}\rt v_{j+1}$, yields the following commutative diagram:
$$\xymatrix{
0 \ar[r] &\Omega_{\La}(G) \ar[d]^{\alpha_j} \ar[r]^l & P \ar[d]^{\alpha_{j+1}}
\ar[r] &G \ar[d]^{\beta}\ar[r] & 0&\\
0 \ar[r] & \Omega_{\La}(G) \ar[r]^{l} & P
\ar[r] & G\ar[r] &0.& \\ } $$
But the induced map $\beta$ is not an automorphism. Otherwise, as $P\rt \mathsf{cok}l=G$ is a projective cover, $\alpha_{j+1}$ is also automorphism, and so, the above diagram yields that $\alpha_j$ is an automorphism, which is a contradiction. Now since $G$ is indecomposable, $\beta$ will be nilpotent. This implies that $\alpha_{j+1}=\alpha_s$ for $j+1\leqslant s \leqslant n$ are nilpotent, and then, the same is true for $\alpha$. Thus the proof is complete.
\end{proof}

According to the terminology introduced in Proposition \ref{linearquiver}, we can determine all indecomposable projective representations in ${\rm rep}(A_n, \La)$ as follows: $[i,j, P]$, where $0 \leqslant i \leqslant j \leqslant n$ and $P$ is an indecomposable projective module. It should be noted that there are alternative ways to present the projective representations, apart from the form $[0, 0, G]$. For example, when $n=5$, the representations $[1,3, P]$, $[2, 3, P]$, and $[3, 3, P]$ all express the same projective representation $(0\rt 0\rt 0 \rt P \st{1}\rt P)$. However, indecomposable non-projective Gorenstein projective representations are uniquely determined up to isomorphism by the terminology provided in the proposition.

As an immediate consequence of Proposition \ref{linearquiver}, we include the result below.
\begin{corollary}
Let $G_1, \cdots, G_m$ be a set of pairwise non-isomorphic indecomposable non-projective representations, and let $P_1, \cdots, P_s$ be a set of pairwise non-isomorphic indecomposable projective representations. Then, any indecomposable Gorenstein projective representation is of one of the following forms:
\begin{itemize}
\item [$(1)$] $[i, j, G_k]$, where $1\leqslant i \leqslant j \leqslant n$, and $1 \leqslant k \leqslant m;$
\item [$(2)$] $[i,j, P_k]$, where $0\leqslant i \leqslant j \leqslant n$, and $1 \leqslant k \leqslant s.$
\end{itemize}
These forms constitute a complete list of pairwise non-isomorphic indecomposable representations in ${\rm rep}(A_n, \La)$. In particular, there are $ns+\frac{mn(n+1)}{2}$ indecomposable objects up to isomorphism in ${\rm rep}(A_n, \La).$
\end{corollary}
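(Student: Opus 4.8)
This is a corollary of Proposition~\ref{linearquiver}, so the plan is essentially bookkeeping. For the \emph{existence} assertion, Proposition~\ref{linearquiver} already gives that every indecomposable object of $\CS_n(\gpr\mbox{-}\La)$ is isomorphic to some $[i,j,G]$ with $G$ an indecomposable Gorenstein projective module and $1\leqslant i\leqslant j\leqslant n$, or to $[0,0,P]$ with $P$ an indecomposable projective module. If $G$ is non-projective it is one of the $G_k$, which is case~$(1)$; if $G$ is projective then $\Omega_{\La}(G)=0$, so $[i,j,G]$ is the projective representation that equals $G$ on $v_{j+1},\dots,v_n$ and is $0$ elsewhere, which is of the form $[i,j,P_k]$ in case~$(2)$; and $[0,0,P]=[0,0,P_k]$. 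Hence the two families exhaust the indecomposable objects; together with the remark preceding the corollary and the standard description of the projective representations of $A_n$ over $\La$ as direct sums of the $e^{v_t}_{\la}(P)$, one also sees that every indecomposable \emph{projective} representation is on the list.

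The substantive points are irredundancy (after the evident identifications) and the count. First I would check that the objects of family~$(1)$ are pairwise non-isomorphic: if $[i,j,G_k]\simeq[i',j',G_{k'}]$, compare composition-length vectors. The vector of $[i,j,G_k]$ is $0$ on $v_1,\dots,v_{i-1}$, constantly $\ell(\Omega_{\La}(G_k))$ on $v_i,\dots,v_j$, and constantly $\ell(P_{G_k})$ on $v_{j+1},\dots,v_n$; since $\Omega_{\La}(G_k)$ is a proper submodule of its projective cover $P_{G_k}$ we have $\ell(\Omega_{\La}(G_k))<\ell(P_{G_k})$, so the vector recovers $i$ (first nonzero place), $j$ (the place of strict increase, or $j=n$ if there is none), and the two lengths; then the module at $v_i$ recovers $\Omega_{\La}(G_k)$ up to isomorphism, hence $G_k\simeq\Omega^{-1}_{\La}\Omega_{\La}(G_k)$ is determined and $k=k'$. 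Next, family~$(1)$ is disjoint from family~$(2)$: a representation in $(2)$ has a projective $\La$-module at every vertex, whereas $[i,j,G_k]$ has the module $\Omega_{\La}(G_k)$ at $v_i$, which is non-projective --- otherwise $G_k$ would be Gorenstein projective of projective dimension at most one, hence projective, against the hypothesis.

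For the count, family~$(1)$ contributes $m\cdot\#\{(i,j)\mid 1\leqslant i\leqslant j\leqslant n\}=\frac{mn(n+1)}{2}$ isomorphism classes. For family~$(2)$ one collapses the redundancy noted in the remark: for a fixed indecomposable projective $P_k$, the representation $[i,j,P_k]$ depends only on $j$ --- it equals $P_k$ on $v_{j+1},\dots,v_n$ and is $0$ elsewhere, hence is zero when $j=n$ --- and these, together with $[0,0,P_k]$, are exactly the $n$ indecomposable projective representations $e^{v_t}_{\la}(P_k)$, $1\leqslant t\leqslant n$; distinct $t$ give distinct supports and non-isomorphic $P_k$ give non-isomorphic representations, so family~$(2)$ contributes exactly $ns$ classes. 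Adding gives $ns+\frac{mn(n+1)}{2}$. The only point that needs a little care is this collapse in the projective case --- in particular the boundary value $j=n$ --- and phrasing the length-vector argument so that it treats $j<n$ and $j=n$ at once; otherwise the statement is an immediate reading of Proposition~\ref{linearquiver}.
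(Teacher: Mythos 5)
Your argument is correct and follows the route the paper intends: the corollary is stated there without proof as an immediate consequence of Proposition~\ref{linearquiver}, and your write-up simply makes explicit the bookkeeping the authors leave implicit (the length-vector argument for irredundancy of the $[i,j,G_k]$, the disjointness of the two families via non-projectivity of $\Omega_{\La}(G_k)$, and the collapse $[i,j,P]\leadsto e^{v_{j+1}}_{\la}(P)$ in the projective family, including the degenerate case $j=n$, which yields the $ns$ count). Nothing is missing.
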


The above corollary states that the path algebra $\La A_n$, or equivalently $T_n(\La)$, is $\cm$-finite when $\La$ is G-semisimple. By use of \cite[Corollary 7.6]{ABHV}, which states the stable category of $\gpr(Q, \La)$ is equivalent to that of $\gpr(Q', \La)$, where $Q'$ is a quiver with the same underlying graph with acyclic quiver $A$ and $\La$ Gorenstein algebra, we can observe that $\La B$ is $\cm$-finite as well, where $B$ is a quiver with the same underlying graph with $A_n$. This leads to produce more $\cm$-finite algebras.

Based on the previous result, we think that utilizing the local description of Gorenstein projective representations provided in \cite[Theorem 5.1]{LuZ} for a finite acyclic quiver, as well as the corresponding result in \cite{LZh2} for acyclic quivers with monomial relations, has the potential to generate additional $\cm$-finite algebras. These can be obtained by considering (quotient algebras of) path algebras of G-semisimple algebras.

\begin{lemma}\label{AlmostSplitNmon}
Let $\La$ be a G-semisimple algebra and let $G$ be an indecomposable non-projective Gorenstein-projective module. Then
\begin{itemize}
\item[$(1)$] The almost split sequence in $\CS_n(\gpr\mbox{-}\La)$ ending at $[n, n, \Omega^{-1}_{\La}G]$ is of the form
$$\xymatrix@1{ 0\ar[r] & [1, n, G]
\ar[r]
& [1,n-1, G]\ar[r]&
[n, n, \Omega^{-1}_{\La}(G)]\ar[r]& 0. } \ \ $$

\item [$(2)$] The almost split sequence in $\CS_n(\gpr\mbox{-}\La)$ ending at $[1, n, G]$ is of the form
$$\xymatrix@1{ 0\ar[r] &[1,1, \Omega_{\La}(G)]
\ar[r]
&[0,0, P^1]\oplus [2, n, G]\ar[r]&
[1,n, G]\ar[r]& 0. } \ \ $$

\item [$(3)$] Let $1 \leqslant i < n$. The almost split sequence in $\CS_n(\gpr\mbox{-}\La)$ ending at $[i,i, \Omega_{\La}(G)]$ is of the form
$$\xymatrix@1{ 0\ar[r] & [i+1, i+1, \Omega_{\La}(G)]
\ar[r]
& [1, i+1, P^1]\oplus [i, i+1, \Omega_{\La}(G)] \ar[r]&
[i, i, \Omega_{\La}(G)]\ar[r]& 0. } \ \ $$

\end{itemize}

\end{lemma}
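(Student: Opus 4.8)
The plan is to reduce all three statements to the already-established case $n=2$ (Lemma \ref{AlmostSplittrivialmonomorphisms}) together with the explicit classification of indecomposables in $\CS_n(\gpr\mbox{-}\La)$ given in Proposition \ref{linearquiver}. The overarching strategy is: first exhibit, in each case, an explicit short exact sequence in $\CS_n(\gpr\mbox{-}\La)$ with the prescribed middle and end terms; then verify it is almost split by checking that the left-hand map is left almost split (equivalently, since $\CS_n(\gpr\mbox{-}\La)$ has almost split sequences by Proposition \ref{Prop. general.functorial}, it suffices to check that the end term is indecomposable non-projective with local endomorphism ring and that the sequence is non-split with the correct Auslander--Reiten translate). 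Since Proposition \ref{linearquiver} tells us the only indecomposables are the $[i,j,G]$ and $[0,0,P]$, and Proposition \ref{Omega-algebra}$(ii)$ identifies $\tau_{\CG}$ with $\Omega_{\La}$ in $\gpr\mbox{-}\La$, one expects $\tau_{\CS_n}$ on these representations to be computable termwise.

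For $(1)$: the end term $[n,n,\Omega^{-1}_{\La}G]$ is concentrated at the last vertex, so an almost split sequence ending there should be built by applying the evaluation/restriction machinery of Section 4 together with the fact that $e^{v_n}_{\la}$ pushes an almost split sequence in $\gpr\mbox{-}\La$ into $\CS_n(\gpr\mbox{-}\La)$. Concretely, I would take the almost split sequence $0\to\Omega_{\La}(\Omega^{-1}_{\La}G)\to P\to \Omega^{-1}_{\La}G\to 0$ in $\gpr\mbox{-}\La$ — which by G-semisimplicity and the identification $\Omega_{\La}\Omega^{-1}_{\La}G\simeq G$ reads $0\to G\to P\to \Omega^{-1}_{\La}G\to 0$ — and observe that the induced sequence on the subquiver $A_n$ has terms $[1,n,G]$, $[1,n-1,G]$, and $[n,n,\Omega^{-1}_{\La}G]$: the constant copy of $G$ on vertices $1,\dots,n$ maps to the copy constant on $1,\dots,n-1$ then jumping to $P$ at vertex $n$, with cokernel $P/G=\Omega^{-1}_{\La}G$ only at the last vertex. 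Non-splitness follows because the underlying sequence in $\gpr\mbox{-}\La$ at vertex $n$ is non-split; left almost splitness reduces, via the termwise structure and the minimality properties of $\Omega$, to the corresponding property in $\gpr\mbox{-}\La$.

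For $(2)$ and $(3)$: these are the "internal" analogues of parts $(2)$ and $(3)$ of Lemma \ref{AlmostSplittrivialmonomorphisms}, and I would obtain them by the same push-out/pull-back constructions used there, but now performed inside $\CS_n$. For $(2)$, start from the almost split sequence $0\to\Omega_{\La}(G)\st{g}\to P^1\st{f}\to G\to 0$ in $\gpr\mbox{-}\La$ (available since $\La$ is G-semisimple) and form the representation-level push-out that on vertex $1$ replaces $\Omega_{\La}(G)$ by the monomorphism $\Omega_{\La}(G)\hookrightarrow P^1$ while on vertices $2,\dots,n$ one has the almost split sequence ending at the constant representation $[2,n,G]$ plus the projective-injective correction term $[0,0,P^1]$; the decomposition of the middle term as $[0,0,P^1]\oplus[2,n,G]$ is forced by Proposition \ref{linearquiver} exactly as in the $n=2$ case. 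For $(3)$, the end term $[i,i,\Omega_{\La}(G)]$ is the monomorphism $\Omega_{\La}(G)\hookrightarrow P^1$ placed at the single arrow $v_i\to v_{i+1}$, and one forms the pull-back that produces $[i+1,i+1,\Omega_{\La}(G)]$ on the left; again Proposition \ref{linearquiver} pins down the middle term as $[1,i+1,P^1]\oplus[i,i+1,\Omega_{\La}(G)]$. The routine verifications — that these sequences lie in $\CS_n(\gpr\mbox{-}\La)$ (i.e. the relevant maps are monic with Gorenstein-projective cokernel, which follows from Theorem \ref{Gorenproj charecte}), that they are non-split, and that the end terms have local endomorphism rings — I would dispatch by the same arguments as in Lemma \ref{AlmostSplittrivialmonomorphisms} and its proof via \cite[Lemma 6.3]{H}.

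The main obstacle I anticipate is the bookkeeping in verifying that the left-hand map of each proposed sequence is genuinely \emph{left almost split} in $\CS_n(\gpr\mbox{-}\La)$ rather than merely that the sequence is non-split with the expected translate: a morphism out of the left term that is not a split monomorphism must be shown to factor through the middle term, and in the quiver setting this requires tracking how such a morphism behaves vertex-by-vertex and invoking the minimal $\prj\mbox{-}\La$-approximation property of the inclusions $\Omega_{\La}(G)\hookrightarrow P$ to propagate the factorization along the identity maps. Once one checks that restriction to the relevant two-vertex subquiver reduces this to Lemma \ref{AlmostSplittrivialmonomorphisms}, combined with the fact (used already in the proof of Proposition \ref{linearquiver}) that an endomorphism of $[i,j,G]$ which is non-invertible on $\Omega_{\La}(G)$ is forced to be non-invertible everywhere, the argument closes; but getting the reduction cleanly stated is where the real work lies.
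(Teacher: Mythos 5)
Your strategy runs in the opposite direction from the paper's, and that is where the gaps appear. The paper does not build candidate sequences and then verify they are almost split; it starts from the almost split sequence $0\to {\rm X}\to {\rm Y}\to C\to 0$ that already exists by Proposition \ref{Prop. general.functorial}, applies the exact restriction functor to a two-vertex subquiver, shows the restricted sequence is almost split in $\CS(\gpr\mbox{-}\La)$, reads off two adjacent components of ${\rm X}$ and ${\rm Y}$ from Lemma \ref{AlmostSplittrivialmonomorphisms}, and only then uses Proposition \ref{linearquiver} together with further arguments to determine ${\rm X}$ and ${\rm Y}$ globally. Your route obliges you to prove that an explicitly constructed sequence is almost split, and the shortcut you offer --- that it suffices for the sequence to be non-split with indecomposable end terms and the correct translate --- is not a valid criterion: in general $\Ext^1(C,\tau C)$ contains non-split extensions that are not almost split (only those in the socle as an $\End(C)$-module are), so you would at least have to identify the relevant stable endomorphism ring as a division ring and invoke Auslander--Reiten duality in the exact category $\CS_n(\gpr\mbox{-}\La)$, none of which appears in your sketch. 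Your fallback, checking left almost splitness by restricting to a two-vertex subquiver, also points the wrong way: almost splitness of a restriction does not imply almost splitness of the ambient sequence, and a factorization produced on two vertices must still be extended over all of $A_n$ --- exactly the step you flag as the hard point but do not carry out.

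There is a second, independent gap: Proposition \ref{linearquiver} does not ``pin down'' the outer terms from their values at two adjacent vertices. In part $(3)$, knowing ${\rm X}_j=0$ for $j\le i$ and ${\rm X}_{i+1}=\Omega^2_{\La}(G)$ is compatible with ${\rm X}=[i+1,m,\Omega_{\La}(G)]$ for every $i+1\le m\le n$, and similarly for the non-projective summand of ${\rm Y}$; the paper needs a genuinely separate argument (applying part $(1)$ on the restricted quiver $v_i\to\cdots\to v_n$ to derive the contradiction $\Omega^2_{\La}(G)=P^1$, then working back vertex by vertex to $v_{i+2}$) to eliminate the other candidates. This elimination is entirely absent from your proposal, and without it the middle and left terms are not determined. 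A minor point on $(1)$: in the paper's convention $[i,j,H]$ carries $\Omega_{\La}(H)$, not $H$, at vertices $i,\dots,j$, so the sequence you write, with constant value $G$ and cokernel $\Omega^{-1}_{\La}(G)$, is the statement for $\Omega^{-1}_{\La}(G)$ rather than for $G$; this is harmless since $\Omega_{\La}$ permutes the indecomposable non-projectives, but the labels should be corrected.
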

\begin{proof} Throughout the proof, let $I_n=\{1, 2, \cdots, n\}$, and consider the setup described in Setup \ref{Setup1}. We assume that representations in $\CS_n(\gpr\mbox{-}\La)$ are direct sums of the indecomposable representations listed in Proposition \ref{CM-finite}.\\ $(1)$ Since $\CS_n(\gpr\mbox{-}\La)$ has almost split sequences, there exists an almost split sequence
$$\la: 0 \rt {\rm X}\rt {\rm Y}\rt [n, n, \Omega^{-1}_{\La}(G)]\rt 0,$$
where ${\rm X}=(X_j)_{j \in I_n}$ and ${\rm Y}=(Y_j)_{j \in I_n}$.
Let $\CQ_1$ be the subquiver $v_{n-1} \st{a_{n-1}}\rt v_n$ of $A_n.$ Apply the exact functor $e^{\CQ_1}$ to the sequence $\la$ and get the short exact sequence in $\gpr(\CQ_1, \La)= \CS(\gpr\mbox{-}\La)$ ending at $(0 \rt G)$, which we denote as $\la_1$. By using the fact that $\la$ is an almost split sequence in $\CS_n(\gpr\mbox{-}\La)$ and considering the structure of indecomposable representations in $\CS_n(\gpr\mbox{-}\La)$ and $\CS(\gpr\mbox{-}\La)$, one may deduce that $\la_1$ is an almost split sequence in $\CS(\gpr\mbox{-}\La)$. According to Lemma \ref{AlmostSplittrivialmonomorphisms}, $$0\lrt ( \Omega_{\La}(G)\st{1}\rt \Omega_{\La}(G))\lrt ( \Omega_{\La}(G)\st{r_0}\rt P^0)\lrt (0\rt G)\lrt 0$$ is an almost split sequence in $\CS(\gpr\mbox{-}\La)$. Hence, $X_{n-1}=X_n=\Omega_{\La}(G)$, and $Y_n=P^0, Y_{n-1}=\Omega_{\La}(G)$. Therefore, in view of the list of indecomposable representations given in Proposition \ref{AlmostSplittrivialmonomorphisms}, the possible choices for ${\rm X}$ and ${\rm Y}$ are $[1, n, G]$ and $[1, n-1, G]$, respectively.\\
$(2)$ Another use of the fact that $\CS_n(\gpr\mbox{-}\La)$ has almost split sequences, one gets an almost split sequence
$$\beta : 0 \rt {\rm X}\rt {\rm Y}\st{\phi}\rt [1, n, G]\rt 0,$$
where ${\rm X}=(X_i)_{i \in I_n}$ and ${\rm Y}=(Y_i)_{i \in I_n}$. Let $\CQ_2$ be the subquiver $v_1\st{a_1}\rt v_2$ of $A_n$. By applying the exact functor $e^{\CQ_2}$ to $\beta$, we obtian the short exact sequence $\beta_1$ in $\gpr(\CQ_2, \La)= \CS(\gpr\mbox{-}\La)$ ending at $(\Omega_{\La}(G)\st{1} \rt \Omega_{\La}(G))$. The short exact sequence $\beta_1$ does not split. Otherwise, this implies that ${\rm Y}$ contains $[1, i, G]$, for some $1 \leqslant i \leqslant n$, as a direct summand. If $i<n, $ according to the arrow $v_i\st{a_i}\rt v_{i+1}$, the epimorphism lying in $\beta$ gives us the commutative diagram
\[\xymatrix{\Omega_{\La}(G) \ar[r]^{j_0} \ar@{=}[d] & P^0\ar[d]^{\phi_{i+1}} \\
\Omega_{\La}(G) \ar@{=}[r] & \Omega_{\La}(G) }\]
which implies that $j_0$ is a section, a contradiction. Hence, if $i=n$, then $\beta$ splits, again leading to a contradiction. We then use this fact that $\beta$ is an almost split sequence in $\CS_n(\gpr\mbox{-}\La)$ and deduce that $\beta_1$ is an almost split sequence in $\CS(\gpr\mbox{-}\La).$ Thus, Lemma \ref{AlmostSplittrivialmonomorphisms} implies that $\beta_1$ has to be the following sequence
$$0\lrt(\Omega_{\La}^2(G)\st{j}\rt P^1)\lrt(P^1\st{1}\rt P^1)\oplus(0\rt\Omega_{\La}(G))\lrt(\Omega_{\La}(G)\st{1}\rt\Omega_{\La}(G))\lrt 0.$$Hence, $X_1=\Omega^2_{\La}(G)$ and $X_2=P^1$. This yields that the only choice for the indecomposable representation ${\rm X}$ is $[1, 1, \Omega_{\La}(G)]$. Finally, we use the fact that on every vertex we have a short exact sequence in $\gpr\mbox{-}\Lambda$ to deduce that the middle term $Y$ is the same as the middle term given in the sequence of $(2)$ in the statement. This completes the proof of $
(2)$.\\$(3)$ Since the case $n=2$ follows from Lemma \ref{AlmostSplittrivialmonomorphisms}, we assume that $n>2.$ Let $1 \leqslant i < n$. Consider the almost split sequence in $\CS_n(\gpr\mbox{-}\La)$ given by:
$$\delta_i: 0 \rightarrow {\rm X} \rightarrow {\rm Y} \rightarrow [i, i, \Omega_{\La}(G)] \rightarrow 0,$$
where ${\rm X}=(X_j)_{j \in I_n}$ and ${\rm Y}=(Y_j)_{j \in I_n}$. Now, let $\CQ_i=v_i\rt v_{i+1}$ be a subquiver of $A_n$. Apply the exact functor $e^{\CQ_i}$ to the sequence $\delta_i$ and get the short exact sequence $\delta_i'$ in $\gpr(\CQ_i, \La)= \CS(\gpr\mbox{-}\La)$. An easy argument reveals that $\delta'_i$ is an almost split sequence in $\CS(\gpr\mbox{-}\La)$. Hence, Lemma \ref{AlmostSplittrivialmonomorphisms} provides us the almost split sequence in $\CS(\gpr\mbox{-}\La)$
$$0\lrt(0\rt\Omega^2_{\La}(G))\lrt(0\rt P^1)\oplus(\Omega^2_{\La}(G)\st{1}\rt\Omega^2_{\La}(G))\lrt(\Omega^2_{\La}(G)\st{j_1}\rt P^1)\lrt0.$$This gives rise to the equalities $X_0=\cdots=X_i=0, X_{i+1}=\Omega^2_{\La}(G)$, and $Y_i=\Omega^2_{\La}(G), Y_{i+1}=P^1\oplus \Omega^2_{\La}(G)$. Thanks to the list of indecomposable representations appeared in Proposition \ref{AlmostSplittrivialmonomorphisms}, we infer that ${\rm X}=[i+1, m, \Omega_{\La}(G) ]$ and ${\rm Y}=[i, m', \Omega_{\La}(G)]\oplus [i, i, P^1]$, where $i+1 \leqslant m \leqslant n$ and $i \leqslant m' \leqslant n$. Note that $[i, i, P^1]=[1, i+1, P^1$. We first show that $m'\neq n.$ Assume to the contrary $m'=n.$ Let $\CQ':v_i\st{a_i}\rt v_{i+1}\cdots \rt v_n$ be a subquiver of $A_n$. Since all the terms of the sequence $\delta_i$ are zero for the vertices $v_j, 1 \leqslant j \leqslant i-1$, $e^{\CQ'}(\delta_i)$ can be considered as an almost split sequence in $\CQ'.$ If we apply $(1)$ of the lemma to the indecomposable representation $[1, n-i+1, \Omega_{\La}(G)]$ in $\gpr(\CQ', \La)$, we need to use the assumption that $n> 2$. This application gives us the middle term of an almost split sequence starting from $[1, n-i+1, \Omega_{\La}(G)]$, which is an indecomposable representation $[1, n-i, \Omega_{\La}(G)]$ in $\gpr(\CQ', \La)$. However, this would imply that $\Omega^2_{\La}(G)=P^1$, which leads to a contradiction. Since restricting the sequence $\delta_i$ on vertex $v_n$ induces a short exact sequence in $\gpr\mbox{-}\La$, we have ${\rm X}_n=P^1, {\rm Y}_n=P^1\oplus P^1$. Now one may observe that the almost sequence $e^{\delta_i}(\CQ')$ induces an almost split sequence on the subquiver $v_i\st{a_i}\rt v_{i+1}\rt \cdots \rt v_{n-1}$. By repeating the similar argument on the vertex $v_{n-1}$ and continuing this process until the vertex $v_{i+2}$, we conclude that for each $ i+2 \leqslant j \leqslant n$, ${\rm X}_j=P^1$ and ${\rm Y}_j=P^1\oplus P^1$. Therefore, ${\rm X}$ and ${\rm Y}$ are in the desired forms, and so, the proof is complete.
\end{proof}

\begin{remark}\label{rmk. middlet-term}
Upon examining the almost split sequences mentioned in Lemma \ref{AlmostSplitNmon}, it becomes apparent that their middle terms include a non-projective summand. This observation leads us to conclude that the vertex of the $\tau_{\CS_n}$ orbit of the form $[1, 1, \Omega_{\La}(G)]$ remains in the same component within $\Ga^s_{\CS_n}$.
\end{remark}

\color{black}

In \cite{HZ}, an interesting three-periodicity phenomenon was investigated. Specifically, it was shown that, under certain mild conditions, the number of finite components in the stable Auslander-Reiten quiver of $\CS(\gpr\mbox{-}\La)=\CS_2(\gpr\mbox{-}\La)$ containing a boundary vertex is divisible by 3 (see \cite[Theorem 6.2]{HZ}). In what follows, we will explore a generalization of this type of periodicity for the finite components of the stable Auslander-Reiten quiver of $\CS_n(\gpr\mbox{-}\La)$, where $\La$ is a G-semisimple algebra.

Denote by $\tau_{\CS_n}$ the Auslander-Reiten translation in $\CS_n(\gpr\mbox{-}\La)$. We need the following definition within the proof of the next theorem.
\begin{definition}
Assume that $\Delta=(\Delta_0, \Delta_1)$ is a quiver with the sets $\Delta_0$ and $\Delta_1$ of vertices and arrows, respectively. The repetition of $\Delta$, denoted by
$\mathbb{Z}\Delta$, is a quiver which is defined as follows:
\begin{itemize}
\item [$\mbox{-}$] $(\mathbb{Z}\Delta)_0=\mathbb{Z}\times\Delta_0$
\item [$\mbox{-}$] $(\mathbb{Z}\Delta)_1=\mathbb{Z}\times \Delta_1 \cup \sigma(\mathbb{Z}\times \Delta_1)$ with arrows $(n,\alpha):(n,x)\rightarrow (n,y)$ and
$\sigma(n,\alpha):(n-1,y)\rightarrow (n,x)$ for each arrow $\alpha:x\rightarrow y$ in $\Delta_1$ and $n \in \mathbb{Z}$.
\end{itemize}
\end{definition}

The quiver $\Z\Delta$ with the translation $\tau(n,x)=(n-1,x)$ is clearly a stable translation quiver that does not depend (up to isomorphism) on the
orientation of $\Delta$, see \cite{Rie}.
Let $\Delta=\mathbb{A}_n$, and define an automorphism $S$ of $\Z\Delta$ by sending $(p, q)$ to $(p+q, n+1-q)$.\\

By abuse of notation, we identify the vertex of the Auslander-Reiten quiver and indecomposable modules. $\Ga_{\CS_n}$ (resp. $\Ga^s_{\CS_n}$ ) denote the (resp. stable) AR-quiver of $\CS_n({\gpr}\mbox{-}\La)$, obtained by removing projective-injective vertices in $\Ga_{\CS_n}.$

\begin{lemma}\label{component-boundary}
Let $\La$ be a  G-semisimple algebra and let $\Ga$ be a component of the stable Auslander-Reiten quiver of $\CS_n(\gpr\mbox{-}\La)$ which has at least one arrow. Then, it contains a vertex of the form $[n, n, \Omega^{-1}_{\La}(G)]$ for some indecomposable non-projective Gorenstein projective module $G$.
\end{lemma}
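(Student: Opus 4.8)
The plan is to argue by a combination of the structural classification of indecomposables in $\CS_n(\gpr\mbox{-}\La)$ (Proposition \ref{linearquiver}) together with the explicit shapes of the almost split sequences in Lemma \ref{AlmostSplitNmon}. The key point to exploit is that an indecomposable object in $\CS_n(\gpr\mbox{-}\La)$ is, up to the (possibly non-indecomposable) projective vertices, one of the two families $[i,j,G]$ with $G$ indecomposable non-projective Gorenstein projective and $1\leqslant i\leqslant j\leqslant n$, or a projective representation $[i,j,P]$. Since $\Ga$ has at least one arrow, it is a component of $\Ga^s_{\CS_n}$ containing at least one non-projective indecomposable, so it contains some $[i,j,G]$; I want to show that walking along the mesh arrows (equivalently, applying $\tau_{\CS_n}$ and $\tau_{\CS_n}^{-1}$ repeatedly) one necessarily reaches a vertex of the boundary form $[n,n,\Omega^{-1}_{\La}H]$ for some $H$.

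First I would record, as a bookkeeping step, how the three families of almost split sequences in Lemma \ref{AlmostSplitNmon} move the ``parameters'' $(i,j)$ and the module $G$. From part $(3)$, the almost split sequence ending at $[i,i,\Omega_{\La}(G)]$ (for $1\leqslant i<n$) has $\tau_{\CS_n}[i,i,\Omega_{\La}(G)]=[i+1,i+1,\Omega_{\La}(G)]$; iterating, after finitely many steps in the $\tau_{\CS_n}$-direction any diagonal object $[i,i,\Omega_{\La}(G)]$ with $i<n$ is sent to $[n,n,\Omega_{\La}(G)]=[n,n,\Omega^{-1}_{\La}(\Omega^2_{\La}G)]$, which is of the required boundary form. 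From part $(1)$, the boundary object $[n,n,\Omega^{-1}_{\La}(G)]$ has $\tau_{\CS_n}[n,n,\Omega^{-1}_{\La}(G)]=[1,n,G]$, and from part $(2)$, $\tau_{\CS_n}[1,n,G]=[1,1,\Omega_{\La}(G)]$. So the three sequences fit together into a cycle of length $3$ on these ``boundary-type'' vertices:
\[
[n,n,\Omega^{-1}_{\La}(G)]\ \leadsto\ [1,n,G]\ \leadsto\ [1,1,\Omega_{\La}(G)]\ \leadsto\ [n,n,\Omega^{-1}_{\La}(\Omega^2_{\La}G)],
\]
and since $G$ is $\Omega_{\La}$-periodic this closes up; hence these vertices all lie in a single component. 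The task therefore reduces to showing that an \emph{arbitrary} non-projective indecomposable $[i,j,G]$ with $j>i$ (the case $j=i$ being already handled) lies in the same component as one of these.

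For that, the cleanest route is to compute the almost split sequences \emph{ending} at a general $[i,j,G]$ with $i<j$, by the same restriction-functor technique used in the proof of Lemma \ref{AlmostSplitNmon}: restrict an almost split sequence in $\CS_n(\gpr\mbox{-}\La)$ ending at $[i,j,G]$ along a suitable one-arrow (or short linear) subquiver, identify the restricted sequence with a known almost split sequence in $\CS(\gpr\mbox{-}\La)$ via Lemma \ref{AlmostSplittrivialmonomorphisms}, and read off the term $\tau_{\CS_n}[i,j,G]$ from the list in Proposition \ref{linearquiver}. I expect the outcome to be of the form $\tau_{\CS_n}[i,j,G]=[i+1,j+1,G]$ as long as $j<n$ (with the middle term a sum of a $[\cdot,\cdot,G]$-type object and projective representations), so that repeatedly applying $\tau_{\CS_n}$ increases both indices until $j$ hits $n$, landing on some $[i',n,G]$; and then part $(1)$/$(2)$ of Lemma \ref{AlmostSplitNmon} (or the boundary analysis above) connects $[i',n,G]$ to a vertex $[n,n,\Omega^{-1}_{\La}(H)]$. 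Throughout, one uses that $\tau_{\CS_n}$ and $\tau_{\CS_n}^{-1}$ preserve each connected component of $\Ga^s_{\CS_n}$, and Remark \ref{rmk. middlet-term} to guarantee that the middle terms of the relevant almost split sequences carry a non-projective summand (so the walk stays inside $\Ga$ and is not broken by passing through a purely projective-injective object).

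The main obstacle I anticipate is the bookkeeping in the general step: establishing the formula $\tau_{\CS_n}[i,j,G]=[i+1,j+1,G]$ for all $1\leqslant i<j<n$ requires the same kind of careful restriction-and-matching argument as in Lemma \ref{AlmostSplitNmon}$(3)$, and one must be sure that the restriction of the (unknown) almost split sequence to the relevant subquiver really is again almost split in $\CS(\gpr\mbox{-}\La)$ and not merely a non-split short exact sequence — this is exactly the subtlety handled via the explicit indecomposables of Proposition \ref{linearquiver}. A secondary subtlety is keeping track of the projective summands in the middle terms (which may fail to be indecomposable), but these only affect which projective-injective vertices are deleted and do not obstruct the connectivity argument. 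Once the general step is in place, the statement follows because every component with an arrow contains some $[i,j,G]$, and the $\tau_{\CS_n}$-walk just described forces it to contain a boundary vertex $[n,n,\Omega^{-1}_{\La}(H)]$.
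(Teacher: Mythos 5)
Your strategy is genuinely different from the paper's, and it has a real gap at its central step. You propose to start from an arbitrary non-projective vertex $[i,j,G]$ of $\Ga$ and walk along $\tau_{\CS_n}$-translates until the indices hit the boundary, and you correctly identify that this requires a formula such as $\tau_{\CS_n}[i,j,G]=[i+1,j+1,G]$ for all $1\leqslant i<j<n$. But you do not prove this formula; you only say you ``expect'' it, and Lemma \ref{AlmostSplitNmon} supplies almost split sequences only for the three boundary families $[n,n,\Omega^{-1}_{\La}(G)]$, $[1,n,G]$ and $[i,i,\Omega_{\La}(G)]$. For $n\geqslant 5$ there are indecomposables such as $[1,3,G]$ or $[2,4,G]$ that occur neither as end terms nor as middle-term summands of any of those three families, so nothing in the paper (or in your write-up) places them in the component of a boundary vertex. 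Since this missing computation is exactly as hard as the proofs in Lemma \ref{AlmostSplitNmon} (restriction to a subquiver, identification of the restricted sequence as almost split, matching against Proposition \ref{linearquiver}), deferring it leaves the argument incomplete rather than merely unpolished. A smaller slip: your ``cycle of length $3$'' on the boundary vertices is really a $\tau_{\CS_n}$-cycle of length $n+1$, since passing from $[1,1,\Omega_{\La}(G)]$ to $[n,n,\Omega_{\La}(G)]$ takes $n-1$ applications of part $(3)$; this does not affect the logic but should be stated correctly.

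The paper avoids the general translate computation entirely. It first reduces to $\La$ indecomposable, so that $\prj T_n(\La)$ is connected and, by \cite[Lemma 5.1]{Li}, the full Auslander--Reiten quiver $\Ga_{\CS_n}$ (with the projective-injective vertices still present) is connected. Given a vertex $x$ of $\Ga$, one chooses a walk in $\Ga_{\CS_n}$ from a projective-injective vertex $y=[i,j,P]$ to $x$ whose intermediate vertices are all non-projective. The module $P$ occurs in the minimal projective resolution of some indecomposable non-projective $G\in\gpr\mbox{-}\La$, so $y$ is a middle-term summand of one of the three almost split sequences of Lemma \ref{AlmostSplitNmon}; hence the first non-projective vertex $x_1$ of the walk lies in the $\tau_{\CS_n}$-orbit of $[n,n,\Omega^{-1}_{\La}(G)]$, and Remark \ref{rmk. middlet-term} keeps that orbit inside the single stable component $\Ga$. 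If you want to salvage your approach, you must either prove the translate formula for all off-diagonal $[i,j,G]$ or switch to the paper's connectivity argument.
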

\begin{proof}
Without loss of generality we may assume that $\La$ is an indecomposable, i.e., $\prj\La$ is a connected category. This implies that $ T_n(\La)$ is an indecomposable algebra. Due to the equivalence $\md$-$T_n(\La) \simeq {\rm rep}(A_n, \La)$, one infers that $\prj T_n(\La)$ is a connected category. Since $\CS_n(\gpr\mbox{-}\La)$ contains $\prj T_n(\La)$, the subcategory of projective representations over $A_n$ is connected. As $\CS_n(\gpr\mbox{-}\La)$ is of finite representation type which contains the connected subcategory of projective representations, \cite[Lemma 5.1]{Li} yields that $\Ga_{\CS_n}$ is connected. It means that there is a walk between any two vertices.

Assume that $\Ga$ is an arbitrary component of $\Ga^s_{\CS_n}$. Take a vertex $x$ of $\Ga.$ The connectedness of $\Ga_{\CS_n}$ gives a walk $y=x_0\longleftrightarrow x_1 \longleftrightarrow \cdots x_{t-1}\longleftrightarrow x_t=x $ such that $y$ is a projective-injective vertex and for $1\leq d\leq t$, $x_d$ is not a projective vertex. By $x_d\longleftrightarrow x_{d+1}$ we mean that there is either an arrow $x_d\rt x_{d+1}$ or an arrow $x_{d+1}\rt x_d$ in $\Ga_{\CS_n}$. According to the characterization of indecomposable projective representations in ${\rm rep}(A_n, \La)$, there exists an indecomposable projective module $P$ that we can determine $y=[i, j, P]$ for some $0 \leqslant i \leqslant j \leqslant n$. Since $\La$ is $G$-semisimple, any almost split sequence in $\gpr\mbox{-}\La$ has a projective middle term. Now as $\gpr\mbox{-}\La$ is a connected category and is of finite representation type, $P$ must appear as a direct summand of the term of minimal projective resolution of an indecomposable non-projective Gorenstein projective module $G$. One should note that since $\Gamma$ has at least one arrow, $\gpr\mbox{-}\La$ can not be equal to $\prj\La.$ Otherwise, all Gorenstein projective representations over $A_n$ turns to be projective, implying that $\Gamma=\varnothing$, which is a contradiction. According to Lemma \ref{AlmostSplitNmon}, $y$ is the middle term of an almost split sequence which has a non-projective summand. Hence $x_1$ must be in the $\tau_{\CS_n}$-orbit of the vertex $[n, n, \Omega^{-1}_{\La}(G)]$. In view of Remark \ref{rmk. middlet-term}, the vertex $[n, n, \Omega^{-1}_{\La}(G)]$ lies in the component $\Ga.$ So the proof is finished.
\color{black}
\end{proof}

Note that the stable Auslander-Reiten quiver of $\CS_n(\gpr\mbox{-}\La)$ is nothing else than the Auslander-Reiten quiver of the triangulated category $\underline {\CS_{n}(\gpr\mbox{-}\La)}$.

\begin{theorem}
Let $\La$ be a finite dimensional {\rm G}-semisimple algebra over an algebraically closed field. Let $\Ga$ be a component of the stable Auslander-Reiten quiver of $\CS_n(\gpr\mbox{-}\La)$. Then

\begin{itemize}
\item [$(1)$] if $n$ is even, then the cardinality of $\Ga$ is divisible by $n+1$;
\item [$(2)$] if $n$ is odd, then the cardinality of $\Ga$ is divisible by $\frac{n+1} {2}.$
\end{itemize}
\end{theorem}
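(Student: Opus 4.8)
The plan is to reduce the statement to a structural description of the components of the stable Auslander–Reiten quiver $\Ga^s_{\CS_n}$ and then count vertices in each component. By Lemma \ref{component-boundary}, every component $\Ga$ with at least one arrow contains a boundary vertex of the form $[n,n,\Omega^{-1}_{\La}(G)]$ for some indecomposable non-projective Gorenstein projective module $G$; components with no arrows consist of a single isolated vertex and can be treated separately (and, since $\La$ is finite dimensional over an algebraically closed field, one checks via the semisimplicity of $\ugpr\mbox{-}\La$ from Proposition \ref{Omega-algebra}(i) that such isolated components do not actually occur once $\Gamma$ is nonempty). So we may fix such a $G$ with $l(G)=\ell$, i.e.\ $\Omega^\ell_{\La}(G)\simeq G$ with $\ell$ minimal, and study the single component $\Ga$ containing $[n,n,\Omega^{-1}_{\La}(G)]$.

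The key step is to identify $\Ga$, as a stable translation quiver, with $\Z\mathbb{A}_n / \langle S^{\ell}\rangle$ where $S$ is the automorphism of $\Z\mathbb{A}_n$ sending $(p,q)$ to $(p+q,\,n+1-q)$, exactly as the definition preceding this statement anticipates. To do this I would first run the computation of a full slice of $\Ga$ using the three families of almost split sequences in Lemma \ref{AlmostSplitNmon}: starting from $[n,n,\Omega^{-1}_{\La}(G)]$, the sequences (1), (2), (3) let one propagate the mesh structure and recognize that the indecomposables $[i,j,H]$ appearing (with $H$ running over the $\Omega_{\La}$-orbit $[G]\in\CC(\La)$, which by Proposition \ref{Omega-algebra}(ii) is exactly the $\tau_{\CG}$-orbit) are indexed precisely by the vertices of one copy of $\Z\mathbb{A}_n$, and that the translation $\tau_{\CS_n}$ corresponds to the translation of $\Z\mathbb{A}_n$. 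The role of the automorphism $S$ is that applying $\Omega_{\La}$ to the second component of $[i,j,H]$ — equivalently walking all the way around the $A_n$-shaped mesh from the boundary $[n,n,-]$ back to a boundary vertex — shifts by $S$: one full traversal replaces $H$ by $\Omega_{\La}(H)$ and permutes $(i,j)$ according to $(p,q)\mapsto(p+q,n+1-q)$. Since $\Omega^\ell_{\La}(H)\simeq H$ and $\ell$ is minimal, the quiver closes up after exactly $\ell$ applications of $S$, giving $\Ga\cong \Z\mathbb{A}_n/\langle S^\ell\rangle$ and in particular $|\Ga|=\ell\cdot|\mathbb{A}_n|=\ell n$ — wait, one must be careful: the correct count is $|\Ga| = \ell\cdot\binom{n+1}{2}$ before passing to the stable quiver, but after removing projective–injective vertices and accounting for the identifications produced by $S$ having order dividing something related to $n+1$, the relevant cardinality is what we must pin down. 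I would compute $|\Ga|$ explicitly from the slice and the order of $S$ modulo the $\ell$-periodicity.

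The arithmetic heart of the argument, and the step I expect to be the main obstacle, is analyzing the order of $S$ as an automorphism of $\Z\mathbb{A}_n$ and how it interacts with the $\ell$-fold periodicity coming from $l(G)=\ell$. Concretely, $S$ has the feature that $S^{n+1}$ is (a power of) the translation $\tau^{-(n+1)}$ or the identity up to translation, depending on the parity of $n$: when $n$ is even, $S$ essentially acts with order $n+1$ on the "horizontal" coordinate and there is no extra symmetry, forcing the number of vertices of $\Ga$ to be a multiple of $n+1$; when $n$ is odd, the map $q\mapsto n+1-q$ has a fixed point $q=(n+1)/2$, which produces an extra factor-of-two collapse, so only divisibility by $(n+1)/2$ survives. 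I would make this precise by writing $\Ga$ as the orbit quiver and computing $|\Ga| = \frac{\ell\,(n+1)\,n}{2\,\gcd(\dots)}$ or similar, then checking in each parity case that $n+1$ (resp.\ $\tfrac{n+1}{2}$) divides this number — the delicate point being to verify that the minimality of $\ell$ does not conspire with the symmetry of $S$ to reduce the period below what is claimed. Finally, since every component either has an arrow (hence is of the form just analyzed) or — as noted — does not arise, the divisibility statement holds for all components $\Ga$, completing the proof.
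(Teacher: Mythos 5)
Your proposal diverges from the paper's argument in an interesting way, but it has a genuine gap at its center. The paper never attempts to identify $\Ga$ explicitly: it invokes the Riedtmann/Amiot/Xiao--Zhu structure theorem for AR-components of locally finite triangulated categories to write $\Ga=\Z\Delta/G$ abstractly, uses the fact that middle terms of almost split sequences have at most two indecomposable non-projective summands to force $\Delta=\mathbb{A}_m$ for some \emph{unknown} $m$, runs through the short list of possible generators ($\tau^r$, $\phi\tau^r$, $\rho^r$), and then uses only one piece of concrete information --- that the boundary vertex $[n,n,\Omega^{-1}_{\La}(G)]$ supplied by Lemma \ref{component-boundary} has $\tau_{\CS_n}$-period divisible by $n+1$ (read off from Lemma \ref{AlmostSplitNmon}) --- to constrain $r$ and hence $|\Ga|$. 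Your plan instead is to pin down the component completely as $\Z\mathbb{A}_n/\langle S^{\ell}\rangle$ with $\ell=l(G)$. That identification is very plausibly true (for $n=2$ it is Theorem \ref{Theorem 4.10}, and it is consistent with the example $\CS_3(\md\mbox{-}k[x]/(x^2))$ at the end of the section), and if established it would give the sharper statement $|\Ga|=l(G)\cdot\tfrac{n(n+1)}{2}$. But you do not establish it, and you cannot with the tools at hand: Lemma \ref{AlmostSplitNmon} only computes the almost split sequences ending at the three special families $[n,n,\Omega^{-1}_{\La}(G)]$, $[1,n,G]$ and $[i,i,\Omega_{\La}(G)]$, i.e.\ at the $\tau_{\CS_n}$-orbit of the boundary. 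To ``propagate the mesh structure'' across a full slice and recognize $\mathbb{A}_n$ with gluing $S^{\ell}$ you would need the almost split sequences ending at arbitrary $[i,j,H]$ (equivalently, that every such object lies in this one component and that the slice really has $n$ vertices), and none of that is proved or even sketched beyond ``I would run the computation.'' This is the main obstacle, not the arithmetic of $S$.

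Two further points. First, your arithmetic endgame is both deferred and partly a red herring: if $\Ga\cong\Z\mathbb{A}_n/\langle S^{\ell}\rangle$ were known, then $S^2=\tau^{-(n+1)}$ gives $|\Ga|=\ell\cdot\tfrac{n(n+1)}{2}$ directly, and $\tfrac{n(n+1)}{2}$ is already divisible by $n+1$ for $n$ even and by $\tfrac{n+1}{2}$ for $n$ odd --- no case analysis of fixed points of $q\mapsto n+1-q$ or of ``the minimality of $\ell$ conspiring with $S$'' is needed. The fact that you anticipate a delicate interaction there suggests the computation was not actually carried out. Second, small corrections: one turn of the $\tau_{\CS_n}$-orbit of length $n+1$ replaces $H$ by $\Omega^{2}_{\La}(H)$, not $\Omega_{\La}(H)$ (so the period of the boundary vertex is $(n+1)\cdot l(G)/\gcd(2,l(G))$, a multiple of $n+1$, which is exactly what the paper's argument needs); and your parenthetical dismissal of arrowless components via ``semisimplicity of $\ugpr\mbox{-}\La$'' is unsubstantiated --- the relevant input is Remark \ref{rmk. middlet-term} together with Lemma \ref{component-boundary}, which is also what the paper relies on. To salvage your route you would either have to prove the full classification of almost split sequences in $\CS_n(\gpr\mbox{-}\La)$, or retreat to the paper's strategy of combining the abstract form $\Z\mathbb{A}_m/G$ with the period of a single boundary vertex.
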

\begin{proof}

Since $\Ga$ is a component of the stable Auslander-Reiten quiver of the triangulated category $\underline {\CS_{n}(\gpr\mbox{-}\La)}$, we obtain by \cite{Am} or \cite{XZ} that $\Ga=\Z\Delta/G$, where $G\simeq \Z$, is a weakly
admissible automorphism group and $\Delta$ is a Dynkin quiver. One can find a complete list of all possible generators of the group $G$ in \cite[Theorem 2.2.1]{Am} or \cite[Section 4.3]{Rie}. In view of Proposition \ref{CM-finite}, there is no almost split sequence in $\CS_n(\gpr\mbox{-}\La)$ with a middle term having more than two indecomposable direct summands. This implies that $\Delta $ must be of type $\mathbb{A}_m$ for some positive integer $m.$ Without loss generality, one may take an orientation of the Dynkin quiver $\Delta$ as below

\begin{displaymath}
\xymatrix{\mathbb{A}_m:\quad 1\ar[r] & 2\ar[r] & \cdots \ar[r]& m-1\ar[r] & m.}
\end{displaymath}
According to \cite[Theorem 2.2.1]{Am}, all possible generators of $G$ are listed follows:
\begin{itemize}
\item
if $m$ is odd, then possible generators are
$\tau^r$ and $\phi\tau^r$ with $r\geq 1$, where $\phi=\tau^\frac{m+1}{2} {S}$
is an automorphism of $\Z\mathbb{A}_m$ of order 2.
\item
if $m$ is even, then possible generators are $\rho^r$, where $r\geq 1$ and
$\rho=\tau^{\frac{m}{2}} {S}$. Since $\rho^2=\tau^{-1}$, $\tau^r$ is a possible generator.
\end{itemize}
Lemma \ref{component-boundary} implies that there exists a vertex of the form $[n, n, \Omega^{-1}_{\La}(G)]$, for some indecomposable Gorenstein projective module $G.$ Then Lemma \ref{component-boundary} yields that $n+1$ applications of the Auslander-Reiten translation $\tau_{\CS_n}$ to the object $[n, n, \Omega^{-1}_{\La}(G)]$ is of $\tau_{\CS_n}$-period $n+1$. Let us fix the following notation throughout the proof.
\begin{itemize}
\item Let $\alpha$ denote the possible generator of $G$.
\item For every $i \in \Z$ and a vertex $a$ in $\Delta$, we let $\widetilde{(i, a)}$ denote the associated $\alpha$-orbit of the vertex $(i, a)$ in $\Z\Delta$, that is, the set of all $\alpha^j(i, a)$ where $j$ runs through $\Z$ .
\item Due to the identification of $\Gamma$ and $\mathbb{Z}\Delta/G$, we correspond the vertex ${\bf X}=[n, n, \Omega^{-1}_{\Lambda}(G)]$ in $\Gamma$ to the orbit of a vertex $(0, x)$ in $\mathbb{Z}\Delta$, where $x$ is a vertex in $\Delta$. In other words, ${\bf X}=\widetilde{(0, x)}$. Therefore, $\widetilde{(0, x)}$ is $\tau$-period of $n+1$.

\item Let $U(v)$ denote the $\tau$-orbit of a vertex $v$ in $\Ga=\Z\Delta/G$.
\end{itemize}

We divide the proof of $(1)$ and $(2)$ into two steps.
\begin{itemize}
\item [$(i)$] If $m$ is even, then $\alpha=\rho^r$ for some $r\geq 1$.
As $\Ga=\Z\mathbb{A}_{m}/<\rho^r>$, we observe that $U(\widetilde{(0, j)})=U(\widetilde{(0, m+1-j)})$ for every $1\leq j\leq m$, and
$$\mid\Ga \mid=\Sigma^{\frac{m}{2}}_{j=1}\mid U(\widetilde{(0, j)}) \mid=\frac{m}{2}r.$$
Keeping in mind that $\rho^2=\tau^{-1}$, we get the vertices $(0, x)$ and $\rho^{2r}(0, x)=\tau^{-r}(0, x)$ are clearly in the same $\rho^r$-orbit. Thus $\tau^r_{\CS}\widetilde{(0, x)}=\widetilde{(0, x)}$. Hence, $n+1$ divides $\mid \Ga \mid$, and consequently $(1)$ and $(2)$ hold.
\item [$(ii)$] If $m$ is odd, then there are two possibilities either $\alpha=\tau^r$ or $\alpha=\phi\tau^r$ for some $r \geq 1$. For the first case, as $\Ga=\Z\mathbb{A}_{m}/<\tau^r>$, one may easily see that $\mid \Ga \mid=rm$. Hence, $n+1$ divides $\mid \Ga \mid$. This implies $(1)$ and $(2)$. For the latter case, which $\alpha=\phi \tau^r$, we first observe that $U(\widetilde{(0, j)})=U(\widetilde{(0, m+1-j)})$ for every $1\leq j\leq n$, and since $\Ga=\Z\mathbb{A}_{m}/<\phi \tau^r>$, one has the equality
$$\mid\Ga \mid=\Sigma^{\frac{m+1}{2}}_{j=1}\mid U(\widetilde{(0, j)}) \mid=\frac{m+1}{2}r.$$
\end{itemize}
Since $\phi^2$ is an automorphism of order 2, and the commutativity of $\tau$ and $S$, we infer that $(\phi\tau^r)^2(0, x)=\tau^{2r}(0, x).$ Hence $(0, x)$ and $\tau^{2r}(0, x)$ lie in the same $\phi\tau^2$-orbit. This leads that $\tau^{2r}_{\CS} {\bf X}=\tau^{2r}_{\CS}\widetilde{(0, x)}=\widetilde{(0, x)}={\bf X}$. Hence $n+1$ divides $2r$. If $n$ is even, then $n+1$ divides $r$. Hence, $(1)$ holds. If $n$ is even, then $\frac{n+1}{2}$ divides $r$. Hence, we obtain $\mid \Ga \mid$ is divisible by $\frac{n+1}{2}$. This completes the proof.
\end{proof}

Let us conclude this section with a simple example indicating that for an odd integer $n$, it is not correct that the cardinality of $\Gamma$ is divided by $n+1$. To demonstrate this, take $n=3$ and $\Lambda=k[x]/(x^2)$. In this case, the only component of the stable AR-quiver of $\mathcal{S}_3(\md\mbox{-}k[x]/(x^2))$ has a cardinality of 6, see \cite[Example 2.5]{XZZ}.

\section*{Acknowledgments}
The authors are grateful to Sondre Kvamme for pointing out
an error in an earlier draft.

\end{document}